\newtheorem{theorem}{${}$\hspace{-4.5mm}Theorem}[section]
\newtheorem{corollary}[theorem]{${}$\hspace{-4.5mm} Corollary}
\newtheorem{proposition}[theorem]{${}$\hspace{-4.5mm} Proposition}
\newtheorem{definition}[theorem]{${}$\hspace{-4.5mm} Definition}
\newtheorem{lemma}[theorem]{${}$\hspace{-4.5mm} Lemma}
\newtheorem{remark}{${}$\hspace{-4.5mm}Remark}[section]
\def\wht{\,\,\widehat{}\,\,}
\title{Cocommutative coalgebras: homotopy theory and Koszul duality\footnote{This work was supported by EPSRC grants EP/J008451/1 and EP/J00877X/1 }}
\author{{JOSEPH CHUANG}\footnote{Department of Mathematics, City University London, London EC\textup{1V 0}HB}\,,\,\,  ANDREY LAZAREV\footnote{Mathematics and Statistics, Lancaster University, Lancaster, LA\textup{1 4}YF } \,\,\& W.H. MANNAN\footnote{Mathematics and Statistics, Lancaster University, Lancaster, LA\textup{1 4}YF }}
\date{\vspace{-5ex}}
\begin{document}

\maketitle

\begin{abstract}
We extend a construction of Hinich to obtain a closed model category structure on all differential graded cocommutative coalgebras over an algebraically closed field of characteristic zero.  We further  show that the Koszul duality between commutative and Lie algebras extends to a Quillen equivalence between cocommutative coalgebras and formal coproducts of curved Lie algebras.
\end{abstract}

\section{Introduction}

Differential graded (dg) coalgebras arise naturally as invariants of topological spaces, for example as the normalized singular chains of a space. They also serve as representing objects for formal deformation functors \cite{Hini, Mane, Prid} and feature prominently in rational homotopy theory \cite{Quil, Neis}. As such it is natural to ask if they may be placed in the framework of a closed model category (CMC), at least in the case of cocommutative dg coalgebras over a field of characteristic zero. The first result of this kind is
due to D. Quillen \cite{Quil} under a rather strong connectivity assumption; this assumption was subsequently removed by Hinich in \cite{Hini}. The crucial difference between these two approaches is that Quillen defined weak equivalences to be quasi-isomorphisms whereas Hinich considered a finer (i.e. harder to satisfy) notion of a \emph{filtered quasi-isomorphism}. A particularly nice feature of Hinich's model is the so-called Koszul duality: it turns out to be Quillen equivalent to the category of dg Lie algebras, again without any grading restrictions.

Hinich's construction, while being a vast generalization of Quillen's, is not completely general in that only \emph{conilpotent} dg coalgebras were allowed.
Our goal is to extend Hinich's construction as well as Koszul duality to \emph{all} cocommutative dg coalgebras, not necessarily conilpotent. We give a rather complete answer in the case when the ground field is algebraically closed.

There are by now quite a few papers devoted to the study of homotopy theory of dg coalgebras. One can  try to extend Hinich's approach to dg coalgebras over other operads, e.g. coassociative dg coalgebras. The corresponding theory was constructed in \cite{Lefe, Posi}; it was further generalized in \cite{Vall} to dg coalgebras over an arbitrary Koszul operad. In these papers coalgebras are still assumed to be conilpotent. It is not clear at present how one can extend these results to non-conilpotent dg algebras, even in the associative case. It is interesting that Positselski nevertheless \cite{Posi} succeeded in constructing a CMC structure on the category of dg comodules over an arbitrary dg coalgebra.

There have been various attempts at constructing  a CMC structure on the category of dg coalgebras taking weak equivalences to be quasi-isomorphisms. Technically, it could be viewed as a Bousfield localization of a Hinich-type CMC. In the absence of the latter one can try to construct such a CMC by transfer from the category of dg vector spaces. In this way, Getzler and Goerss constructed a CMC of non-negatively graded coassociative dg coalgebras, \cite{Getz}; a more abstract approach was taken in the recent paper \cite{Stan}. The paper \cite{Hess} takes a categorical approach allowing to prove a CMC structure on dg comodules over a coring satisfying certain conditions. The paper \cite{Aubr} constructs a CMC on the category of dg coalgebras over a quasi-free operad (however note that the operads of associative or commutative algebras do not fall into this framework). Finally, the series of papers by J. Smith \cite{Smit,Smit1,Smit2} claimed to construct a transferred CMC structure on dg coalgebras over fairly arbitrary cooperads. It follows from our results that  some additional assumptions (e.g. those made in \cite{Aubr}) are necessary since the cofree coalgebra functor fails to be exact in the simplest possible case: that of cocommutative dg coalgebras over a field of characteristic zero. Strikingly, it \emph{is} exact in the coassociative context, a phenomenon for which we lack a really satisfactory explanation, cf. Remark \ref{comass} below. It follows that there cannot be a transferred CMC on cocommutative dg coalgebras, as opposed to the coassociative dg coalgebras (even if one imposes the characteristic zero assumption).


The category of cocommutative coalgebras may be identified via dualisation over the ground field with the opposite category to the category of pseudo--compact commutative algebras, cf. \cite{Gabr, Dema} for a detailed study of pseudo-compact algebras in a non-differential context. This point of view is taken, e.g., in the papers \cite{Laza1, Laza2} and we adopt it here as well; it is quite convenient, particularly when dealing with cofree coalgebras (whose duals are suitably completed symmetric algebras). If a dg coalgebra is conilpotent, then the corresponding dual algebra is local in the differential sense, i.e. it has a unique dg ideal.
We refer to such algebras as Hinich algebras.

In \S\ref{pseudosec} we extend arguments of Demazure concerning commutative algebras over  algebraically closed fields \cite{Dema} to the graded commutative case, thus showing that any pseudo--compact  algebra is the direct product of local pseudo--compact  algebras.  However Demazure's methodology does not take account of the differential, so as well as Hinich algebras, pseudo--compact dg algebras include algebras where the maximal ideal is not closed under the differential.

In \S\ref{extsec} we extend the CMC stucture on Hinich algebras to the category of all pseudo--compact dg algebras with a unique maximal graded ideal, possibly non-differential, which we refer to as the extended Hinich category.  One may visualize this as passing from the category of pointed connected spaces to the category of pointed connected spaces together with the empty set.  The role of the empty set is played by an acyclic algebra which we denote $\Lambda(x)$.  However we are not merely adding an object analogous to the empty set, but rather objects analogous to the Cartesian  products of the empty set with a connected space.  Unlike their topological analogues, these objects are not all isomorphic.

As a result, this extension of the CMC structure on Hinich algebras is nontrivial, and depends on the particular properties of $\Lambda(x)$ in the extended Hinich category.   However the structure that we obtain is in some sense natural, being the unique extension which preserves certain intuitions, such as the inclusion of the empty set in a point being both a fibration and a cofibration.

Next in \S\ref{prodsec} we show that the category of formal products of objects in a CMC is itself a CMC.   Continuing our topological analogy, this corresponds to passing from pointed connected spaces to (multiply) pointed disconnected spaces.  In fact the CMC structure on formal products is quite natural, with the properties of a map being a cofibration or weak equivalence being tied to the corresponding properties on the components of the map in the original category.  The only surprise is that the property of a  map being a fibration, determined by the right lifting property, is not tied to the components of the map being a fibration.

In particular we conclude that the category of all pseudo-compact commutative dg algebras, being the category of formal products in the extended Hinich category, is itself a CMC.   We attempt a more abstract view of the situation by relating the CMC structure on all pseudo--compact algebras to the one on Hinich algebras via a Quillen adjunction, which may be defined independently of the factorization in  \S\ref{pseudosec} in the commutative case.

In \cite{Hini} the CMC structure was transferred via a Quillen equivalence from the category of dg Lie algebras.  The natural extension of this Quillen equivalence to our extended Hinich category is to the category of curved Lie algebras. In \S\ref{curvedsec} we describe the CMC structure on curved Lie algebras and how the Quillen equivalence extends to it. It follows that the category of all pseudo-compact commutative dg algebras is Quillen equivalent to the opposite category of formal coproducts of curved Lie algebras. One can view this (anti)-equivalence as the commutative-Lie Koszul duality extended to all pseudo--compact commutative dg algebras (or, equivalently, all cocommutative dg coalgebras).

We remark that the construction of the CMC structure on curved dg Lie algebras may be applied verbatim to yield a CMC structure on curved associative dg algebras.  This raises the possibility of transferring this CMC structure to the category of all local pseudo--compact coassociative dg coalgebras.

\section{Pseudo--compact algebras} \label{pseudosec}
Henceforth let $k$ be an algebraically closed field of characteristic 0. We work in the underlying category of differential $\mathbb Z$-graded $k$-vector spaces. Algebras and coalgebras (which will always be graded commutative) are assumed to be over $k$ as are unlabeled tensor products.   Let $\mathcal{V}$ denote the category of counital (cocommutative) dg coalgebras over $k$.

Let $V$ be an object in $\mathcal{V}$.  Given any $v \in V$, the counit and coassociativity of $V$ imply that $v$ is contained in a finite dimensional graded ideal in $V$ \cite[Lemma 1.1 \& Lemma 1.2]{Getz}.  By cocommutativity this is necessarily a finite dimensional subcoalgebra.  (In the non-commutative case such a subcoalgebra containing $v$ may still be constructed by other means).  Thus $V$ is the union of its finite dimensional graded subcoalgebras and enlarging these subcoalgebras if necessary one can assume that they are closed with respect to the differential and still finite-dimensional. It follows that the linear dual of $V$ over $k$ (the algebra henceforth denoted $V^*$) is the inverse limit of finite dimensional
(commutative) dg algebras (namely the linear duals of the finite dimensional dg subcoalgebras of $V$).

As such, the algebra $V^*$ is endowed with a topology (regarding finite dimensional algebras as discrete spaces).  Given a morphism in $\mathcal{V}$: $f\colon V_1 \to V_2$, its dual is a continuous algebra morphism $f^*\colon V_2^* \to V_1^*$.  Conversely any such continuous linear map is induced by a morphism in $\mathcal{V}$.

\begin{definition}
A \emph{pseudo--compact dg algebra} over $k$ is the inverse limit of a diagram of unital finite dimensional dg algebras  over $k$.  A morphism of pseudo--compact dg algebras is a unit preserving dg-algebra morphism which is continuous with respect to the induced topology.
\end{definition}

Then $V^*$ is a pseudo--compact dg algebra.  In fact any pseudo--compact dg algebra arises as the dual of a dg coalgebra.  To recover this coalgebra, simply take the topological dual. Thus we may identify $\mathcal{V}^{\rm op}$ with the category of pseudo--compact dg algebras.  We will show that an arbitrary pseudo--compact dg algebra may be identified (both algebraically and topologically) with the direct product of local pseudo--compact dg algebras:

\begin{definition}\label{loco}
A \emph{local pseudo--compact dg algebra} is a pseudo--compact dg algebra having a unique maximal graded ideal, possibly not closed with respect to the differential.
\end{definition}

 We consider the following two types of local pseudo--compact dg algebra:

\begin{definition}
A \emph{Hinich algebra} is the linear dual of a dg coalgebra in the sense of \cite[2.1]{Hini}.  Specifically it is a local pseudo--compact dg algebra whose maximal ideal is closed under the differential.
\end{definition}

\begin{definition}
An \emph{acyclic algebra} is a local pseudo--compact dg algebra 
in which every closed element is a boundary.
\end{definition}

In fact these are all the local pseudo--compact dg algebras:
\begin{lemma} \label{twotypes}
Any local pseudo--compact dg algebra is either a Hinich algebra or an acyclic algebra.
\end{lemma}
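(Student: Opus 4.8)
The plan is to split on whether the unique maximal graded ideal $\mathfrak m$ of our local pseudo-compact dg algebra $A$ is preserved by the differential. If $d\mathfrak m\subseteq\mathfrak m$ then $A$ is a Hinich algebra by definition and there is nothing to prove, so the entire content lies in the complementary case: I would show that as soon as $d\mathfrak m\not\subseteq\mathfrak m$, the algebra $A$ is acyclic.

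First I would record the basic structure of $A$ as a graded-local ring. Writing $A=\varprojlim A_i$ as an inverse limit of finite-dimensional local dg algebras, each $\mathfrak m_i$ is nilpotent, so $\mathfrak m=\varprojlim\mathfrak m_i$ is topologically nilpotent; consequently any element of the form $1+n$ with $n\in\mathfrak m$ is invertible, its inverse being the convergent geometric series. I would also note that the residue graded field $A/\mathfrak m$ is forced to be $k$ concentrated in degree $0$: it is a graded-commutative division algebra over the algebraically closed field $k$ of characteristic zero, and in such an algebra any homogeneous element of nonzero even degree would generate an infinite-dimensional Laurent subalgebra while any odd-degree element squares to zero, leaving only $k$ in degree $0$. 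Hence $A=k\cdot 1\oplus\mathfrak m$ as graded vector spaces, and a homogeneous element lies outside $\mathfrak m$ precisely when it is a degree-$0$ unit.

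Now assume $d\mathfrak m\not\subseteq\mathfrak m$ and choose $x\in\mathfrak m$ with $dx\notin\mathfrak m$. Since $\mathfrak m$ is graded we may take $x$ homogeneous, and since the only homogeneous elements outside $\mathfrak m$ sit in degree $0$, the element $w:=dx$ must be a homogeneous unit of degree $0$ while $x$ must have odd degree. Because $d^2=0$ the unit $w$ is closed, and differentiating $ww^{-1}=1$ shows $w^{-1}$ is closed as well. Setting $s:=x\,w^{-1}$, the Leibniz rule gives
\[
ds=(dx)\,w^{-1}+(-1)^{|x|}x\,d(w^{-1})=w\,w^{-1}=1,
\]
so $A$ carries an odd-degree element $s$ with $ds=1$. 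Defining $h(a):=s\,a$ and using that $|s|=|x|$ is odd, the Leibniz rule yields $(dh+hd)(a)=(ds)\,a+\big((-1)^{|s|}+1\big)\,s\,(da)=a$, so $h$ is a contracting homotopy. In particular every closed $z$ satisfies $z=d(s z)$, whence $A$ is acyclic and the dichotomy is complete.

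I expect the only real friction to be the two structural facts about pseudo-compact local algebras invoked at the start --- the topological nilpotence of $\mathfrak m$, so that $1+\mathfrak m$ consists of units, and the identification $A/\mathfrak m=k$ --- rather than the homotopy construction, which is essentially forced once a closed degree-$0$ unit appears in the image of $d$. A secondary point to state carefully is the reduction to a homogeneous $x$: since $\mathfrak m$ is a graded ideal, $dx\notin\mathfrak m$ guarantees that some homogeneous component $x_i$ already has $dx_i\notin\mathfrak m$, and that component necessarily lives in the single odd degree for which $d$ lands in degree $0$.
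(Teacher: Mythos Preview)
Your proof is correct and follows essentially the same route as the paper: pick a homogeneous $x\in\mathfrak m$ with $dx$ a unit, then use this to exhibit acyclicity (you via an explicit contracting homotopy $a\mapsto sa$ with $ds=1$, the paper via the equivalent observation $[a]=[a\cdot w\cdot dm]=[a][w][dm]=0$). The structural preamble you flag as ``friction'' --- topological nilpotence of $\mathfrak m$ and the identification $A/\mathfrak m\cong k$ --- is never actually used in your argument and the paper dispenses with it entirely: that $dx\notin\mathfrak m$ is a unit follows immediately from $\mathfrak m$ being the unique maximal graded ideal, and the oddness of $|s|$ needed for the homotopy follows from $|s|=|x|-|dx|=-|d|=\pm 1$ without knowing $|dx|=0$.
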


\begin{proof}
Let $A$ be a local pseudo--compact dg algebra.  Suppose $A$ is not a Hinich algebra.  Then the maximal ideal $M$ is not closed under the differential.  That is we have some $dm\notin M$ for some  homogeneous $m \in M$.  Then $dm$ is invertible as otherwise it would generate an ideal, necessarily contained in $M$.  Let $wdm=1$ for some homogeneous $w\in A$.  We have:$$
0=d1=d(w(dm) )=(dw)(dm).$$  Multiplying both sides by $w$, we get $dw=0$.  Then given a homology class $[a]$ for $a \in A$, we have $[a]=[awdm]=[a][w][dm]=[0]$.

%

Thus $A$ is an acyclic algebra as required.
\end{proof}

Let $A$ be a pseudo--compact dg algebra.  
  The following arguments are essentially the ones articulated by Demazure \cite{Dema} in the (ungraded) commutative case.

We wish to show that $A$ is a direct product of local pseudo--compact dg algebras.  We know that $A$ is the inverse limit of a diagram of finite dimensional dg algebras.  Restricting to the image of $A$ in each of these finite dimensional algebras, we have that $A$ is the inverse limit of quotients by finite codimension
 dg ideals $I_i$, indexed by $i \in \mathcal{I}$ some indexing set.

Each quotient $A/I_i$ is a finite dimensional commutative dg algebra, hence, forgetting the differential, a finite product of local graded algebras. Since $du=0$ for any idempotent $u$, the differential acts in each factor, and so  
$A/I_i$ is a finite product of local dg algebras.
By replacing each of these products  with its local factors, and each morphism with its component morphisms we obtain a new diagram (with the same inverse limit) of local finite dimensional dg algebras.  Thus without loss of generality we may assume that each dg ideal $I_i$ is contained in a unique graded maximal
 ideal $M_i$ (namely the preimage of the unique graded maximal ideal in $A/I_i$).

Let $\Omega$ denote the set of graded maximal ideals of $A$ containing some $I_i$.  For each $M\in\Omega$ let $A_M$ denote the inverse limit of the subdiagram of quotients $A/I_i$ with $I_i\subset M$;
 as the inverse limit of finite-dimensional dg algebras, it is a pseudo--compact algebra.

\begin{lemma}\label{prodoverOmega}
We have an isomorphism of pseudo--compact dg algebras:
$$A\cong\prod_{M \in \Omega}A_M,$$ with the topology on the right hand side the product topology on the 
pseudo--compact
 algebras $A_M$.
\end{lemma}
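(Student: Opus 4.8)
The plan is to show that the canonical comparison map is an isomorphism of pseudo--compact dg algebras. For each $M\in\Omega$ the projections $A\to A/I_i$ with $I_i\subset M$ are compatible with the transition maps of the subdiagram defining $A_M$, so by the universal property of the inverse limit they assemble into a continuous homomorphism $A\to A_M$; collecting these over all $M\in\Omega$ yields a continuous homomorphism $\Phi\colon A\to\prod_{M\in\Omega}A_M$ into the product (with the product topology). I will prove that $\Phi$ is an isomorphism, and for this the essential point is a structural statement about the refined diagram $\{A/I_i\}_{i\in\mathcal I}$ of local finite dimensional dg algebras, which by construction still has inverse limit $A$.

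The key claim is that every transition map in this diagram preserves the associated maximal ideal. Indeed, a transition map is a unit--preserving homomorphism $A/I_i\to A/I_j$ between local finite dimensional dg algebras; it carries the unit to the unit and nilpotents to nilpotents, so it sends the maximal (equivalently, nil) ideal of the source into the maximal ideal of the target, while the preimage of the target's maximal ideal, being a proper ideal, must equal the maximal ideal of the source. Hence the map is local, and composing with $A\to A/I_i$ and $A\to A/I_j$ shows $M_i=M_j$. Consequently no transition map connects indices lying over different maximal ideals, and the indexing category splits as the disjoint union $\bigsqcup_{M\in\Omega}\mathcal I_M$ of the full subcategories $\mathcal I_M=\{\,i:I_i\subset M\,\}$.

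Since a limit taken over a disjoint union of indexing categories is the product of the limits over the blocks, this splitting gives $A=\varprojlim_{i\in\mathcal I}A/I_i\cong\prod_{M\in\Omega}\varprojlim_{i\in\mathcal I_M}A/I_i=\prod_{M\in\Omega}A_M$, and the resulting canonical isomorphism is exactly $\Phi$. Two routine verifications complete the argument. First, each $\mathcal I_M$ is directed: given two indices over $M$, one intersects their underlying finite--codimension ideals (the family of such ideals being closed under intersection) and selects the unique local factor of the corresponding quotient lying over $M$, which maps to both; this guarantees that each $A_M$ is genuinely an inverse limit of finite dimensional dg algebras, i.e. a pseudo--compact dg algebra. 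Second, $\Phi$ is a homeomorphism: the inverse--limit topology on $A$ has as a subbasis the preimages of points under the maps $A\to A/I_i$, and under the block decomposition these are precisely the subbasic opens of the product topology on $\prod_{M}A_M$.

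The main obstacle is the structural claim that the transition maps are local, that is, preserve the maximal ideal; once this ``locality of the transition maps'' is established, the passage from a single inverse limit to the product over $\Omega$ is purely formal, and what remains (directedness of each $\mathcal I_M$ and the matching of the two topologies) is bookkeeping. I do not expect the differential to cause any difficulty here, since the decomposition into local factors and the comparison of maximal ideals both take place after forgetting the differential, and the idempotents effecting the decomposition are automatically closed, as recorded in the discussion preceding the lemma.
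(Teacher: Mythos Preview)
Your proposal is correct and follows essentially the same architecture as the paper's proof: both show that the refined diagram of local quotients $\{A/I_i\}$ breaks up into disjoint blocks indexed by $\Omega$, whence the inverse limit $A$ is the product of the block limits $A_M$, and both finish by observing that the subbasic opens match. The only substantive difference lies in how the ``no cross--block transition maps'' fact is justified. You argue that a unital homomorphism between finite--dimensional local graded--commutative algebras must be local, since the maximal graded ideal coincides with the set of nilpotent homogeneous elements and nilpotents are preserved; the paper instead observes that the existence of a transition map forces a containment of ideals (your $I_i\subset I_j$, playing the role of their $J$), and then invokes the already--recorded fact that each $I_i$ lies in a \emph{unique} maximal graded ideal to conclude $M_i=M_j$. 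Both arguments are valid; the paper's is a touch more direct in that it avoids the (true but unstated in the paper) identification of the maximal graded ideal with the nilradical.

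One small comment: your verification that each $\mathcal I_M$ is directed appeals to intersecting two finite--codimension ideals, but the intersection need not appear as an index in the given diagram. This does not damage the proof, however, since by the paper's definition a pseudo--compact dg algebra is simply an inverse limit over \emph{any} diagram of finite--dimensional dg algebras, so $A_M$ is pseudo--compact regardless of whether the subdiagram is directed.
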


\begin{proof}
An element of $A$ is a consistent assignment of congruence classes in each quotient $A/I_i$, whilst an element of $A_M$ is a consistent assignment of congruence classes to just those quotients $A/I_i$ with $I_i\subset M$.  Thus we have a natural injective algebra homomorphism $A\to\prod_{M \in \Omega}A_M$.  To see that this is surjective  we must show that any element of the product yields a consistent set of congruence classes in the $A/I_i$.

If this were not consistent then we would have
dg ideals $I_i,I_j$ contained in distinct unique graded maximal ideals $M,M'$ together with an
dg ideal $J$ containing both $I_i,I_j$.  However in that case we would have $J$ contained in a unique graded maximal ideal which must equal both $M$ and $M'$, contradicting $M \neq M'$.

Finally note that the open sets of both $A$ and $\prod_{M \in \Omega}A_M$ are generated by the preimages of subsets of the $A/I_i$.
\end{proof}

Let $M \in \Omega$.  Given $a\in A$ with component 0 in $A_M$, we have $a \in I_i$ for each $I_i \subset M$.  In particular $a \in M$.  Thus $M$ is the direct product of a graded maximal
ideal
$\hat{M}$ in $A_M$ with the remaining $A_N,\,\, N \in \Omega$.

An element $a \in A$ lies in $\hat{M}$ if its image in one of (and hence all) the $A/I_i$, (with $I_i \subset M$) lies in the unique maximal ideal.

\begin{lemma}
The algebra $A_M$ is local
pseudo--compact dg algebra.
\end{lemma}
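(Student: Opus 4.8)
The plan is to identify $\hat M$ as the unique maximal graded ideal of $A_M$. Since $\hat M$ has already been shown to be a graded maximal ideal, it suffices to prove that every proper graded ideal of $A_M$ is contained in $\hat M$, and for this it is enough to show that every homogeneous element of $A_M$ lying outside $\hat M$ is invertible. Indeed, a graded ideal is the sum of its homogeneous components, so if all of its homogeneous elements are non-units then it is contained in the set of homogeneous non-units, while a graded ideal containing a homogeneous unit is all of $A_M$.

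First I would invoke the characterization of $\hat M$ recorded above: a homogeneous $a \in A_M$ lies in $\hat M$ exactly when its image $\bar a_i$ in one --- equivalently every --- quotient $A/I_i$ with $I_i \subset M$ lies in the maximal graded ideal of $A/I_i$. The force of the phrase ``equivalently every'' is that the transition maps of the inverse system $A_M = \varprojlim_{I_i \subset M} A/I_i$ are surjective homomorphisms of finite-dimensional graded-local dg algebras, hence local homomorphisms: the preimage of a maximal graded ideal is again the maximal graded ideal, so membership in the maximal ideal is transported coherently throughout the directed system.

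Now take a homogeneous $a \in A_M \setminus \hat M$. By the characterization, each image $\bar a_i$ avoids the maximal graded ideal of $A/I_i$. As $A/I_i$ is a finite-dimensional graded-local algebra over the algebraically closed field $k$, the Demazure-style arguments of the preceding discussion give that its maximal graded ideal is nilpotent with residue $k$ concentrated in degree $0$; a homogeneous element outside it is therefore of degree $0$ and of the form $\lambda + n$ with $\lambda \in k^{\times}$ and $n$ nilpotent, hence a unit. Thus each $\bar a_i$ has an inverse $b_i \in A/I_i$. Because the transition maps are algebra homomorphisms they send inverses to inverses, so the family $(b_i)$ is compatible and assembles to an element $b \in A_M$ with $ab = 1$. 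Hence $a$ is invertible, and the argument is complete.

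The main obstacle is the coherence underlying the ``one, hence all'' description of $\hat M$, namely that the graded-local structure is compatible across the entire inverse system; this rests on the earlier reduction of the defining diagram to one of graded-local finite-dimensional dg algebras with surjective --- and therefore local --- transition maps. Once this is secured, the production of the inverse $b$ inside $\varprojlim A/I_i$ is a routine compatibility check, and the only genuinely graded subtlety --- that elements of nonzero degree automatically lie in the maximal ideal while the degree-zero residue is just $k$ --- is handled by the nilpotence of the maximal graded ideal of each finite-dimensional factor.
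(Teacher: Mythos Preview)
Your proof is correct and follows essentially the same strategy as the paper: both show that a homogeneous $a \in A_M \setminus \hat{M}$ is a unit by observing that its image in each quotient $A/I_i$ (with $I_i \subset M$) lies outside the maximal ideal and is therefore invertible, and that the resulting (unique) inverses assemble compatibly to an inverse in the limit. You supply more justification---the coherence of the ``one, hence all'' description of $\hat M$ via locality of the transition maps, and the structure of the finite-dimensional graded-local factors---than the paper's very terse argument, but the core idea is identical.
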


\begin{proof}
Let $a\in A$ be a homogeneous element not in $\hat{M}$.
Then $a$
maps to a
unit in each $A/I_i$ with $I_i \subset M$.  That is it has a unique inverse in each quotient.  Let $b \in A_M$ represent this collection of congruence classes in the $A/I_i$.  Then $ab=ba=1$.
\end{proof}

Thus by  Lemma \ref{twotypes}
and Lemma \ref{prodoverOmega} we have that $A$ is a direct product of Hinich algebras and acyclic algebras.

Consider a morphism of pseudo--compact dg algebras:
\begin{eqnarray}f\colon \prod_{i \in I} A_i \to \prod_{j \in J} B_j \label{morphofpseudo}\end{eqnarray}
where the $A_i,B_j$ are all Hinich algebras or acyclic algebras.  Clearly $f$ is determined by its compositions with projections onto the factors $B_j$ for $j \in J$.  Denote these $f_j$.  Then as each $A_i, B_j$ contains no idempotents other than $0,1$ we know that each  $f_j$ factors through some ultraproduct of the $A_i$ over an ultrafilter $\mu_j$ on $I$.

\begin{lemma}
For $j \in J$ the ultrafilter $\mu_j$ is principal.
\end{lemma}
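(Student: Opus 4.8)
The plan is to argue by contradiction, exploiting the interplay between the ultrafilter condition and the continuity requirement built into the definition of a morphism of pseudo--compact dg algebras. Suppose $\mu_j$ is not principal, so that it contains no finite subset of $I$ (equivalently, it contains every cofinite subset). For a subset $S\subseteq I$ write $e_S\in\prod_{i\in I}A_i$ for the idempotent that is $1$ in each factor $A_i$ with $i\in S$ and $0$ otherwise; since each $A_i$ has no idempotents other than $0$ and $1$, these exhaust the idempotents of the product, and the ultrafilter is recovered as $\mu_j=\{S\subseteq I : f_j(e_S)=1\}$. In particular, for every finite $F\subseteq I$ we have $F\notin\mu_j$ and hence $f_j(e_F)=0$.

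First I would produce an open set separating $1$ from $0$ in the target. Because $B_j$ is a local pseudo--compact dg algebra, its maximal ideal $\mathfrak m$ is the preimage of the maximal ideal in any of the finite-dimensional local quotients presenting $B_j$, and is therefore open; the set of units $W=B_j\setminus\mathfrak m$ is then open, contains $1$, and excludes $0$ (as $0\in\mathfrak m$ while $1\notin\mathfrak m$). Since $f_j$ is the composite of the continuous map $f$ with a projection it is continuous, so $f_j^{-1}(W)$ is an open neighbourhood of the unit $1=(1,1,\dots)$ of $\prod_{i\in I}A_i$. The decisive point is that, in the product topology, a basic open neighbourhood of a point constrains only finitely many coordinates: thus $f_j^{-1}(W)$ contains a set of the form $\{a : a_i\in U_i \text{ for all } i\in F_0\}$ for some finite $F_0\subseteq I$ and open sets $U_i\ni 1$.

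Now the finitely supported idempotent $e_{F_0}$ agrees with $1$ in every coordinate $i\in F_0$, so $e_{F_0}$ lies in this basic neighbourhood and hence $f_j(e_{F_0})\in W$. But $F_0$ is finite, so $F_0\notin\mu_j$ and $f_j(e_{F_0})=0\notin W$, a contradiction. Therefore $\mu_j$ must be principal. I expect the main obstacle to be not the combinatorics but making the topological step precise: one must check that the maximal ideal of $B_j$ is genuinely open (so that $W$ is open and separates $0$ from $1$) and that the product topology really forces the pulled-back neighbourhood of $1$ to impose conditions on only finitely many factors. This finiteness is exactly the mechanism by which a non-principal ultrafilter, whose mass is "spread out to infinity," collides with the finite constraints coming from continuity.
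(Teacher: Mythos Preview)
Your proof is correct and follows essentially the same approach as the paper's: use continuity and the product topology to find a basic neighbourhood of $1$ constraining only finitely many coordinates, note that the corresponding finitely-supported idempotent lies in it, and conclude. The only minor variation is your choice of separating open set: the paper simply uses $B_j\setminus\{0\}$, which is open because an inverse limit of discrete (hence $T_1$) spaces is $T_1$, rather than the complement of the maximal ideal.
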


\begin{proof}
As any inverse limit of $T_1$ spaces is $T_1$, we know that the complement of $0\in B_j$ is open.  As $f$ is continuous, the preimage $f^{-1}(B_j \backslash \{0\})$ is also open.  In particular it contains a neighbourhood of $1$.  That is it contains the direct product of the $A_i$ with finitely many of the $A_i$ replaced by sets $U_i \subset A_i$, containing $1\in A_i$.

In particular $f^{-1}(B_j \backslash \{0\})$ contains an idempotent $x$ with component $1$ in finitely many factors $A_i$ and component $0$ in the rest.  As $f(x) \neq 0$ it must be the only other idempotent in $B_j$: $f(x)=1$.  Then our ultrafilter $\mu_j$ contains a finite set and is principal.
\end{proof}

Of course the ultraproduct over the principal ultrafilter $\mu_j$ is just a factor$A_{i_j}$.  That is for every $j\in J$ there exists $i_j\in I$ such that the map $f_j$ factors through the factor $A_{i_j}$.  In summary we have:

\begin{theorem}\label{pseudo}
Every pseudo--compact dg algebra is a direct product of Hinich algebras and acyclic algebras.  A morphism of pseudo--compact dg algebras $f$ (as in (\ref{morphofpseudo})) corresponds precisely to a collection of continuous homomorphisms; $f_j\colon A_{i_j}\to B_j$ for each $j \in J$.
\end{theorem}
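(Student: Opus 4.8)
The plan is to assemble Theorem \ref{pseudo} from the lemmas already established, treating the structural decomposition and the morphism description as two separate pieces. For the decomposition, I would simply invoke the chain of results already in place: Lemma \ref{prodoverOmega} gives the isomorphism $A\cong\prod_{M\in\Omega}A_M$ of pseudo--compact dg algebras, the subsequent lemma shows each $A_M$ is local, and Lemma \ref{twotypes} classifies each local factor as either a Hinich algebra or an acyclic algebra. Composing these gives $A\cong\prod_{M\in\Omega}A_M$ with every factor of one of the two desired types, which is precisely the first sentence of the theorem. This part requires no new argument beyond citing the preceding results in order.

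For the morphism statement, I would start from a morphism $f$ as in (\ref{morphofpseudo}) and observe, as noted just before the statement, that $f$ is completely determined by its composites $f_j$ with the projections onto each factor $B_j$. The key structural input is that each $A_i$ and each $B_j$, being local, contains no idempotents other than $0$ and $1$; I would use this to argue that each $f_j\colon\prod_{i\in I}A_i\to B_j$ factors through an ultraproduct of the $A_i$ over some ultrafilter $\mu_j$ on $I$. Then the just-proved lemma that $\mu_j$ is principal identifies this ultraproduct with a single factor $A_{i_j}$, so each $f_j$ factors as a continuous homomorphism $A_{i_j}\to B_j$. Conversely, any such collection $(f_j)_{j\in J}$ manifestly assembles into a continuous unital dg-algebra morphism of the product, so the correspondence is a genuine bijection.

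The main obstacle here is conceptual rather than computational: I need to be certain that the reduction ``$f_j$ factors through an ultraproduct'' is legitimate. This rests on the standard fact that a ring homomorphism from a product $\prod_{i\in I}A_i$ to a ring $B$ with no nontrivial idempotents factors through an ultraproduct, because the kernel of $f_j$ pulled back along the idempotents of the product determines a maximal filter of subsets of $I$, hence an ultrafilter. I would make sure this is stated cleanly, since it is the load-bearing step connecting the algebraic absence of idempotents to the combinatorics of ultrafilters. Once that factorization is granted, the principality of $\mu_j$ (already proved using continuity and the $T_1$ property) collapses the ultraproduct to a single coordinate, and the rest is formal.

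Finally, I would note that continuity of each $f_j\colon A_{i_j}\to B_j$ follows from continuity of $f$ together with the fact that the isomorphism of Lemma \ref{prodoverOmega} and the product projections are continuous, so no topological subtlety is lost in passing to components. With both halves in hand — the product decomposition into Hinich and acyclic factors, and the coordinatewise description of morphisms — the theorem follows by combining the cited lemmas, and I would keep the written proof short, essentially a one-line synthesis pointing to Lemmas \ref{twotypes}, \ref{prodoverOmega}, and the two intervening lemmas.
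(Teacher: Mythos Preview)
Your proposal is correct and follows essentially the same route as the paper: the theorem there is stated explicitly as a summary (``In summary we have:'') of the preceding chain --- Lemma~\ref{prodoverOmega}, the lemma that each $A_M$ is local, Lemma~\ref{twotypes}, the ultraproduct factorization via idempotents, and the principality lemma --- which is exactly the sequence you outline. Your additional remarks on why the ultraproduct factorization is legitimate and on continuity of the component maps are helpful clarifications of steps the paper leaves implicit, but the overall architecture is identical.
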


Hinich \cite{Hini} has shown that the full subcategory of pseudo--compact dg algebras consisting of Hinich algebras is a CMC, cf. \cite{Laza1} concerning this formulation.  In the following section we extend this CMC structure to a larger full subcategory that includes acyclic algebras and show that this too is a CMC.  In \S4 we define the notion of the category of formal products of objects in a category.  From Theorem \ref{pseudo} it is clear that applying this operation to our category of Hinich algebras and acyclic algebras yields precisely the category of pseudo--compact dg algebras.

Further we show that this formal product operation takes closed model categories to closed model categories.  Thus we have that the category of pseudo--compact algebras is a CMC.  As discussed, this may be identified with $\mathcal{V}^{\rm op}$, the opposite category to $\mathcal {V}$.  Interchanging fibrations and cofibrations then implies that $\mathcal{V}$ is a CMC.

Before we proceed with this construction of a CMC structure on $\mathcal{V}$, we will briefly discuss an alternative approach.  In \cite{Smit1} the problem of inducing a CMC structure on $\mathcal{V}$ is approached by constructing an adjunction from $\mathcal{V}$ to the underlying category of dg vector spaces $\mathcal{U}$.

The functor $G\colon \mathcal{V} \to \mathcal{U}$ in this adjunction is just the forgetful functor, whilst its right adjoint $F$  is the `cofree' functor, which we will describe by explicitly giving $F^{\rm op}\colon \mathcal{U}^{\rm op} \to \mathcal{V}^{\rm op}$.  Here $F^{\rm op}$ denotes the functor which given any $V \in \mathcal{U}$, takes $V^*\mapsto (F(V))^*$.  Here we continue to regard $\mathcal{V}^{\rm op}$ as the category of pseudo--compact dg algebras over $k$ and similarly we regard $U^{\rm op}$ as the category of pseudo--compact dg vector spaces.

The following construction  is given in the  associative context in \cite[Proposition 1.10]{Getz}.  Firstly, given a finite dimensional dg vector space $V$, let $F^{\rm op}(V)=\underset{I}\varprojlim\{S(V)/I\}$, where  $S(V)$ is the free commutative dg algebra generated by $V$, and $I$ ranges over all differential ideals of finite dimensional index.

\begin{definition}
Given $V\in  U^{op}$ a pseudo--compact dg vector space with $V=\varprojlim_\gamma V_\gamma$ for finite dimensional $V_\gamma$, we define:
$$
F^{\rm op}(V)=\varprojlim_{\gamma} F^{\rm op}(V_\gamma).
$$
\end{definition}

\begin{lemma}
The functor $F$ is right adjoint to $G$.
\end{lemma}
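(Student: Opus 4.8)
The plan is to pass to the dual, pseudo-compact side throughout, where the statement becomes the universal property of a completed free commutative algebra. Since $k$-linear duality identifies $\mathcal{V}$ with the category of pseudo-compact dg algebras and $\mathcal{U}$ with the category of pseudo-compact dg vector spaces, and since passing to opposite categories turns an adjunction into an adjunction with the roles of left and right exchanged, it suffices to exhibit a natural isomorphism
$$\mathrm{Hom}_{\mathcal{V}^{\rm op}}(F^{\rm op}(V),A)\;\cong\;\mathrm{Hom}_{\mathcal{U}^{\rm op}}(V,G^{\rm op}(A)),$$
for every pseudo-compact dg vector space $V$ and every pseudo-compact dg algebra $A$; here the left-hand side consists of continuous dg-algebra homomorphisms and the right-hand side of continuous dg-linear maps into the underlying pseudo-compact dg vector space of $A$. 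In other words, I want to show that $F^{\rm op}(V)$ is the free pseudo-compact commutative dg algebra on $V$.

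First I would settle the base case, with both $V$ and $A$ finite dimensional (hence $A$ discrete). By definition $F^{\rm op}(V)=\varprojlim_I S(V)/I$, the limit being taken over all finite-codimension differential ideals $I$ of the free commutative dg algebra $S(V)$. Since $A$ is finite dimensional and discrete, any continuous homomorphism $F^{\rm op}(V)\to A$ factors through one of the finite dimensional quotients $S(V)/I$; conversely any dg-algebra map $S(V)\to A$ has a finite-codimension differential kernel (its image is a subalgebra of the finite dimensional $A$, and the kernel is a differential ideal), so it factors through some $S(V)/I$ and induces a continuous map out of $F^{\rm op}(V)$. Thus continuous homomorphisms $F^{\rm op}(V)\to A$ are exactly dg-algebra maps $S(V)\to A$, and by the ordinary universal property of the free commutative dg algebra these correspond to dg-linear maps $V\to A$, as required.

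I would then remove the finiteness hypotheses by two limit arguments. Writing $A=\varprojlim_j A_j$ as an inverse limit of finite dimensional dg algebras with the inverse-limit topology, a continuous homomorphism into $A$ is precisely a compatible family of continuous homomorphisms into the $A_j$, so both sides commute with the limit over $j$ and one reduces to $A$ finite dimensional. Dually, writing $V=\varprojlim_\gamma V_\gamma$ with finite dimensional $V_\gamma$ and hence $F^{\rm op}(V)=\varprojlim_\gamma F^{\rm op}(V_\gamma)$, a continuous map from this cofiltered limit into a discrete finite dimensional algebra factors through one of the terms $F^{\rm op}(V_\gamma)$ (by pseudo-compactness, equivalently by duality with the filtered union of the finite dimensional coalgebras $V_\gamma^*$), while a continuous linear map $V\to A$ likewise factors through one of the $V_\gamma$. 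Both sides are therefore the filtered colimit over $\gamma$ of the finite dimensional case, which is already settled; combining the two reductions with the base case yields the isomorphism, and its naturality in $V$ and in $A$ is immediate from the constructions.

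The main obstacle is the topological bookkeeping: one must verify that ``continuous'' is exactly the condition that converts the abstract universal property of $S(V)$ into a statement about the completion $F^{\rm op}(V)$, i.e. that continuity amounts to factoring through a finite-codimension differential ideal on the source and through a finite dimensional quotient on the target. The device that makes this work is that $F^{\rm op}(V)$ is defined as the limit over \emph{all} finite-codimension differential ideals, rather than over the powers of an augmentation ideal; this is precisely what guarantees that continuous homomorphisms out of $F^{\rm op}(V)$ into finite dimensional algebras recover all of $\mathrm{Hom}_{\rm alg}(S(V),-)$, and hence all linear maps on generators.
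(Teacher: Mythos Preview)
Your argument is correct and follows essentially the same route as the paper: both pass to the pseudo-compact side and establish $F^{\rm op}\dashv G^{\rm op}$ by reducing $\mathrm{Hom}(F^{\rm op}(V),A)$ first to finite-dimensional targets $A_\alpha$ (an inverse limit), then to finite-dimensional sources $V_\gamma$ (a direct limit), and finally invoking the universal property of $S(V_\gamma)$ after noting that continuous maps out of $\varprojlim_I S(V_\gamma)/I$ into a finite-dimensional algebra are exactly algebra maps out of $S(V_\gamma)$. The paper compresses this into a single chain of isomorphisms $\varprojlim_\alpha\varinjlim_\gamma\varinjlim_I\mathrm{Hom}(S(V_\gamma)/I,A_\alpha)\cong\cdots\cong\mathrm{Hom}(V,A)$, whereas you spell out the topological bookkeeping that justifies each step.
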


\begin{proof}
(cf. Proof of \cite[Proposition 1.10]{Getz} for the associative case.)

We will show that the forgetful  functor $G^{\rm op}\colon \mathcal{V}^{\rm op}\to \mathcal{U}^{\rm op}$ is right adjoint to $F^{\rm op}$.  Let $A=\varprojlim_{\alpha} A_\alpha$ be a pseudo--compact algebra and let $V=\varprojlim_\gamma V_\gamma$, where as before the $V_\gamma$ are finite dimensional.  We have:
\begin{eqnarray*}
{\rm Hom}(F^{\rm op}(V),A)
&\cong&\varprojlim_{\alpha}\varinjlim_\gamma\varinjlim_I{\rm Hom}(S(V_\gamma)/I,A_\alpha)\\
&\cong& \varprojlim_{\alpha}\varinjlim_\gamma {\rm Hom}(S(V_\gamma),A_\alpha)\\
&\cong& \varprojlim_{\alpha}\varinjlim_\gamma {\rm Hom}(V_\gamma,A_\alpha)\\
&\cong& {\rm Hom}(V,A).
\end{eqnarray*}
\end{proof}

The approach of \cite{Smit, Smit1,Smit2} is to transfer the CMC structure on $\mathcal{U}$ to one on $\mathcal{V}$.  That is, a morphism $f\colon A \to B$ is defined to be a weak equivalence or cofibration precisely when the underlying morphism of dg vector spaces $G(f)$ has the corresponding property \cite[Definition 4.5]{Smit1}.

Thus the functor $G$ preserves cofibrations and acyclic cofibrations.  This is equivalent to saying that $F$ preserves fibrations and acyclic fibrations \cite[Remark 9.8]{Dwye}.   Thus $F^{\rm op}$ preserves cofibrations and acyclic cofibrations.  It follows from Ken Brown's Lemma \cite[Lemma 9.9]{Dwye} that $F^{\rm op}$ preserves weak equivalences between cofibrant objects. Since in $\mathcal{U^{\rm op}}$ all objects are cofibrant it is necessarily the case that  $F^{\rm op}$ is exact (cf \cite[Theorem 4.4]{Smit2}).

However we will show that $F^{\rm op}$ is not exact and thus that a CMC structure on $\mathcal{V}$ cannot be transferred in this way.

Let $V=\langle x,y \rangle$ be the two dimensional dg vector space over $k$  satisfying $dx=y$, with $x$ in degree 0 and $y$ in degree -1.  The map $f\colon V \to 0$ is a quasi--isomorphism.  We will demonstrate that $F^{\rm op}$ is not exact by showing that $F^{\rm op}(f)$ is not a quasi--isomorphism.    Indeed this implies that $F,G$ cannot form a Quillen adjunction for any CMC structure on $\mathcal{V}$ where $G$ preserves weak equivalences.

Let $k[|x,dx|]$ be completion of the dg algebra $S(V)=k[x,dx]$ at the dg ideal $(x)$.
\begin{proposition}
There is an isomorphism of pseudo-compact dg algebras
\[
F^{\rm op}(V)\cong \prod_{\lambda\in k}k[|x_{\lambda},dx_{\lambda}|].
\]
\end{proposition}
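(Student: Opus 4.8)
The plan is to compute $F^{\mathrm{op}}(V)$ directly from its definition as the inverse limit $\varprojlim_I S(V)/I$ over the finite-codimension differential ideals $I$ of $S(V)$, and then to read off the product decomposition from Theorem \ref{pseudo}. Here $S(V)=k[x,dx]$ is the free graded commutative algebra on $x$ in degree $0$ and $dx$ in degree $-1$; as a graded vector space it is $k[x]\otimes\Lambda(dx)$, with differential determined by $d(x)=dx$ and $d(dx)=0$. Being an inverse limit of finite-dimensional dg algebras, $F^{\mathrm{op}}(V)$ is a pseudo-compact dg algebra, so Theorem \ref{pseudo} applies and exhibits it as a product of local factors indexed by the graded maximal ideals of $S(V)$ that contain some finite-codimension differential ideal.

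The first step is to identify this indexing set $\Omega$. Since $dx$ is an odd element it squares to zero and hence lies in every graded maximal ideal; a graded maximal ideal of $S(V)$ therefore corresponds to a maximal ideal of $k[x]$, and as $k$ is algebraically closed these are exactly the ideals $M_\lambda=(x-\lambda,\,dx)$ for $\lambda\in k$. Each $M_\lambda$ genuinely lies in $\Omega$ because it contains the finite-codimension differential ideal
\[
J^{(\lambda)}_n=\bigl((x-\lambda)^n,\ (x-\lambda)^{n-1}dx\bigr),
\]
whose quotient has dimension $2n-1$; one checks that it is closed under $d$ using $d\bigl((x-\lambda)^n\bigr)=n(x-\lambda)^{n-1}dx$ together with $d\bigl((x-\lambda)^{n-1}dx\bigr)=0$. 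Thus $\Omega$ is naturally identified with $k$.

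The heart of the argument is the computation of the local factor $A_{M_\lambda}$, and this is where I expect the main work to lie: one must understand all finite-dimensional differential quotients supported at $\lambda$. I would argue that the ideals $J^{(\lambda)}_n$ are cofinal among all finite-codimension differential ideals $I$ for which $S(V)/I$ is local at $M_\lambda$. Indeed, in such a quotient the image of $M_\lambda$ is nilpotent, so $(x-\lambda)^N\in I$ for some $N$; applying $d$ and using that $N$ is invertible in characteristic zero yields $(x-\lambda)^{N-1}dx\in I$, whence $J^{(\lambda)}_N\subseteq I$. Cofinality then gives
\[
A_{M_\lambda}\cong\varprojlim_n S(V)/J^{(\lambda)}_n\cong k[[x-\lambda]]\otimes\Lambda(dx)=k[|x_\lambda,dx_\lambda|],
\]
which is precisely the completion of $S(V)$ at the dg ideal $(x-\lambda)$, since a short computation with $(dx)^2=0$ shows $(x-\lambda,dx)^n=J^{(\lambda)}_n$.

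Finally, assembling these computations through Theorem \ref{pseudo}, together with Lemma \ref{prodoverOmega} to match the topologies (the right-hand side carrying the product topology on the factors $k[|x_\lambda,dx_\lambda|]$), delivers the desired isomorphism of pseudo-compact dg algebras $F^{\mathrm{op}}(V)\cong\prod_{\lambda\in k}k[|x_\lambda,dx_\lambda|]$. The only genuine subtlety is the cofinality claim of the third paragraph, which is exactly the point where the characteristic-zero hypothesis enters; everything else is bookkeeping around the already-established product decomposition.
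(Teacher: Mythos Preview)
Your argument is correct, but it proceeds differently from the paper. The paper works directly with a cofinal family of \emph{principal} dg ideals $(p(x))$ for $p(x)\in k[x]$: for each such $p$, it factors $p(x)=\prod(x-a_i)^{k_i}$ (using that $k$ is algebraically closed), observes that the resulting orthogonal idempotents in $k[x]/(p(x))$ are cocycles in $k[x,dx]/(p(x))$, and hence decomposes each finite quotient as a product of local dg algebras $k[x_\lambda,dx_\lambda]/(x_\lambda^{k_\lambda})$; passing to the inverse limit then gives the product of completions. You instead invoke the general structure theory already established (Lemma~\ref{prodoverOmega} and Theorem~\ref{pseudo}) to obtain the product decomposition abstractly, and then compute each local factor $A_{M_\lambda}$ via your cofinal system $J^{(\lambda)}_n$. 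Your route is more conceptual and makes the structural origin of the product transparent, while the paper's route is more hands-on and, crucially, isolates the step ``idempotents are cocycles'' that drives Remark~\ref{comass}: in the noncommutative de Rham complex those same idempotents fail to be cocycles, which is exactly why the coassociative cofree functor behaves differently. Your proof does not surface this point, though it is not needed for the proposition itself. One small wrinkle: when you appeal to Lemma~\ref{prodoverOmega}, the set $\Omega$ is literally defined in terms of graded maximal ideals of the pseudo-compact algebra $F^{\rm op}(V)$, not of $S(V)$; the identification you make is correct but deserves a one-line justification via the natural bijection between the two.
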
\label{pseudodeRham}
\begin{proof}
Consider the collection $\mathcal{J}$ of principal dg ideals in $k[x,dx]$, i.e. those generated by a single polynomial $p(x)\in k[x]$; clearly this collection is cofinal among all dg ideals in $k[x,dx]$ of finite-dimensional index and so
\[
F^{\rm op}(V)\cong \varprojlim_{I\in\mathcal{J}}k[x,dx]/I.
\]
Writing $p(x)$ as a product of powers of prime ideals (which are all linear since $k$ is algebraically closed): $p(x)=(x-a_1)^{k_1}\ldots (x-a_n)^{k_n}$ we see that the  algebra $k[x]/(p(x))$ decomposes into a direct product of local algebras:
\[
k[x]/(p(x))\cong \prod_{i=1}^n k[x]/(x-a_i)^{k_i}\cong \prod_{\lambda=1}^nk[x_{\lambda}]/(x_\lambda^{k_\lambda}).
\]
We denote by $e_1,\ldots, e_n$ the corresponding set of orthogonal idempotents in $k[x]/(p(x))$; thus \[e_ik[x]/(p(x))\cong k[x]/(x-a_i)^{k_i}.\] The idempotents $e_i$ are cocycles in the dg algebra $k[x,dx]/(p(x))$ and so, give its decomposition as a direct product of dg algebras:
\[
k[x,dx]/(p(x))\cong \prod_{\lambda=1}^nk[x_{\lambda}, dx_\lambda]/(x_\lambda^{k_\lambda}).
\]
Since $\varprojlim_{k}k[x_\lambda,dx_\lambda]/(x_\lambda^{k})=k[|x_\lambda,dx_\lambda|]$, the desired statement follows.
\end{proof}

\begin{corollary}
The functor $F^{\rm op}$ is not exact.
\end{corollary}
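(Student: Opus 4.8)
The plan is to show that $F^{\rm op}$ fails to preserve the quasi-isomorphism $f\colon V\to 0$, where $V=\langle x,y\rangle$ with $dx=y$. Since $V$ is acyclic, applying an exact functor would force $F^{\rm op}(V)$ to be acyclic as well; so it suffices to exhibit a nonzero cohomology class in $F^{\rm op}(V)$. By the Proposition just proved, $F^{\rm op}(V)\cong\prod_{\lambda\in k}k[|x_\lambda,dx_\lambda|]$, so I would reduce the computation to a single factor $k[|x,dx|]$, the completion of $k[x,dx]$ at the ideal $(x)$.

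The key step is therefore to compute the cohomology of the local factor $k[|x,dx|]$ and observe that it is nonzero. The point is that completion at $(x)$ does not commute with taking cohomology: in the uncompleted algebra $k[x,dx]$ the differential $d(x^n)=n x^{n-1}\,dx$ means that every monomial in positive degree is either a boundary or maps to one, and the cohomology is just $k$ in degree $0$; but in the completed algebra new cocycles appear. Concretely I would look at the element $\exp(x)=\sum_{n\ge 0} x^n/n!$, which is a well-defined element of $k[|x|]\subset k[|x,dx|]$ since we are in characteristic zero. One computes $d(\exp(x))=\exp(x)\,dx$, and more relevantly the element $\exp(x)\,dx$ is a nonzero cocycle (as $d(dx)=0$ and the product rule gives $d(\exp(x)\,dx)=\exp(x)\,dx\cdot dx=0$ since $(dx)^2=0$) which is not a boundary: any element whose differential equals $\exp(x)\,dx$ would have to be $\exp(x)$ up to a constant, but $\exp(x)$ is a genuine power series, not killed by the completion, and checking that no element of the completed algebra has differential $\exp(x)\,dx$ other than $\exp(x)+\text{const}$ shows the class $[\exp(x)\,dx]$ is nonzero. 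Thus $H(k[|x,dx|])\neq 0$.

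The main obstacle is making the cohomology computation of the completed algebra precise, since one must argue carefully that completion genuinely enlarges the cohomology rather than merely reindexing it. The cleanest route is to write $k[|x,dx|]=\varprojlim_m k[x,dx]/(x^m)$ and analyze the cohomology of each truncation $k[x,dx]/(x^m)$ directly; in each finite quotient the differential $d(x^i)=i x^{i-1} dx$ pairs $x^i$ with $x^{i-1}dx$, but the top monomial $x^{m-1}dx$ has no preimage inside the quotient, producing a nonzero class, and one checks these classes are compatible under the inverse system so that the $\varprojlim$ retains nonzero cohomology (here the relevant $\varprojlim^1$ vanishing or direct compatibility must be verified). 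Once $H(k[|x,dx|])\neq 0$ is established, $H(F^{\rm op}(V))\neq 0$ follows since cohomology of a product is the product of cohomologies, and hence $F^{\rm op}(f)$ cannot be a quasi-isomorphism, proving $F^{\rm op}$ is not exact.
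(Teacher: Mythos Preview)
Your argument contains a genuine error at two levels.

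First, the framing is off. You write that exactness would force $F^{\rm op}(V)$ to be acyclic, so exhibiting any nonzero class suffices. But $F^{\rm op}(0)=k$, which has $H_0=k$, not zero; exactness of $F^{\rm op}$ would only force $H_*(F^{\rm op}(V))\cong k$ concentrated in degree $0$, not that it vanish. So finding ``a nonzero cohomology class'' is not enough: you must show the cohomology differs from $k$.

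Second, and more seriously, your candidate classes do not survive. The element $\exp(x)\,dx$ \emph{is} a boundary in $k[|x,dx|]$: since $\exp(x)=\sum_{n\ge 0}x^n/n!$ lies in $k[|x|]$, one has $d(\exp(x))=\exp(x)\,dx$ on the nose. More generally, every $f(x)\,dx$ with $f\in k[|x|]$ is the boundary of its formal antiderivative $\int f$, which again lies in $k[|x|]$ in characteristic zero; hence $H_{-1}(k[|x,dx|])=0$. Your alternative via truncations fails for the same reason in the limit: the class $[x^{m-1}dx]$ in $k[x,dx]/(x^m)$ maps to zero in $k[x,dx]/(x^{m-1})$ (since there $x^{m-1}dx=d(x^m/m)$ is already a boundary, indeed $x^{m-1}dx=0$), so the inverse system of $H_{-1}$'s has zero transition maps and both $\varprojlim$ and $\varprojlim^1$ vanish. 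In fact a direct computation gives $H_*(k[|x,dx|])=k$ in degree $0$: each individual factor has exactly the cohomology of $k$.

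The paper's argument is therefore genuinely different and is what is needed: the obstruction is not in a single factor but in the \emph{multiplicity} of factors. One has $H_0(F^{\rm op}(V))=\prod_{\lambda\in k}H_0(k[|x_\lambda,dx_\lambda|])=\prod_{\lambda\in k}k$, which is enormous, while $H_0(F^{\rm op}(0))=k$; hence $F^{\rm op}(f)$ cannot be a quasi-isomorphism.
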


\begin{proof}

We have that $$F^{\rm op}(V)=\prod_{\lambda \in k} k[|x_\lambda,dx_\lambda|].$$

For each $\lambda$, we have $H_0(k[|x_\lambda,dx_\lambda|])=k$.  Thus: $$H_0(F^{\rm op}(V))=\prod_{\lambda \in k} k.$$

On the other hand $F^{\rm op}(0)=k$, so $H_0(F^{\rm op}(0))=k$.  Thus $F^{\rm op}(f)$ cannot be a quasi-isomorphism, as required.
\end{proof}
\begin{remark}\label{comass}
Note that $S(V)\cong k[x,dx]$ is the de Rham algebra of $k[x]$ and $F^{\rm op}(V)$ is its pseudo-compact completion. It is, thus,  the de Rham algebra of the pseudo-compact completion $k[x]\wht$ of $k[x]$; and we showed that it is a direct product of copies of the de Rham algebra of $k[|x|]$. In contrast, the corresponding procedure applied to the free associative algebra $k\langle x,dx\rangle$ leads to the pseudo-compact version of the algebra of noncommutative forms on $k[x]\wht$. The cohomology of the complex of noncommutative forms is always isomorphic to $k$ sitting in degree zero, cf. for example \cite{Ginz}, 11.4. Technically, the arguments in the proof of Proposition \ref{pseudodeRham} fail because the idempotents $e_i$ are no longer cocycles in the algebra of noncommutative forms.  It follows that the cofree coalgebra functor is exact in the noncommutative context. This was also proved by Getzler and Goerss, \cite{Getz}, Theorem 2.1 by constructing an explicit chain homotopy.
\end{remark}

\section{The Extended Hinich Category}\label{extsec}

Let $\mathcal{E}$ denote the full subcategory of the category of  pseudo--compact dg algebras, whose objects are precisely the local pseudo--compact dg algebras.  As we have seen, these algebras are either Hinich algebras or acyclic algebras.  The goal of this section is to show that $\mathcal{E}$ has the structure of a CMC.

As $d1=0$, every acyclic algebra contains an element $x$ with $dx=1$.  Moreover, $x$ may chosen to be of degree 1, so an acyclic  algebra contains the algebra $\Lambda(x)= k[x]/x^2$, with differential given by $dx=1$.

\begin{lemma}\label{acyc} Let $A$ be an acyclic algebra.  Then $A=A^0 \otimes \Lambda$, where $A^0$ is the subalgebra of closed elements.
\end{lemma}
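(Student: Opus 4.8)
The plan is to use the degree one element $x$ with $dx=1$ (whose existence was just recorded) as a contracting homotopy that splits $A$ into its closed part and $x$ times its closed part. Concretely, I would introduce the multiplication map
\[
\mu\colon A^0\otimes\Lambda\to A,\qquad a_0\otimes 1+a_1\otimes x\mapsto a_0+a_1x,
\]
and prove it is an isomorphism of pseudo--compact dg algebras. Here $A^0=\ker d$ is a graded subalgebra carrying the zero differential (a product of closed elements is closed), and $\Lambda=\Lambda(x)=k[x]/x^2$ with $dx=1$; note that $x^2=0$ is automatic since $x$ has odd degree and $\mathrm{char}\,k=0$, so $\Lambda$ is simply the free graded--commutative algebra on one odd generator.

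First I would verify that $\mu$ is a morphism of dg algebras. Compatibility with products is the Koszul sign bookkeeping for the tensor product of graded algebras, using graded commutativity $xa_0=(-1)^{|a_0|}a_0x$ together with $x^2=0$. Compatibility with differentials reduces to the single identity $d(a_1x)=(-1)^{|a_1|}a_1$ for closed $a_1$, which matches the tensor differential $d(a_1\otimes x)=(-1)^{|a_1|}a_1\otimes 1$ induced by $dx=1$.

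The heart of the argument is bijectivity, and this is exactly where $x$ does the work. For surjectivity I would use the decomposition $a=(a-x\,da)+x\,da$: a direct computation with $dx=1$ gives $d(x\,da)=da$, so both $a-x\,da$ and $da$ are closed, exhibiting $a$ as $\mu$ of the element $(a-x\,da)\otimes 1+(-1)^{|da|}(da)\otimes x$. For injectivity, if $a_0+a_1x=0$ with $a_0,a_1$ closed, applying $d$ yields $(-1)^{|a_1|}a_1=0$, whence $a_1=0$ and then $a_0=0$; this simultaneously shows the splitting is unique. To upgrade the resulting algebraic isomorphism to one of pseudo--compact dg algebras, I would observe that $A^0=\ker d$ is closed (the differential is continuous) and hence itself pseudo--compact, that $A^0\otimes\Lambda\cong A^0\oplus A^0$ carries the product topology because $\Lambda$ is finite dimensional, and that both $\mu$ and its inverse $a\mapsto(a-x\,da)\otimes 1+(-1)^{|da|}(da)\otimes x$ are continuous, being assembled from multiplication by $x$ and the differential.

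The argument is essentially self--contained; the only place demanding care is the simultaneous interaction of the three structures---keeping the Koszul signs straight so that $\mu$ respects product and differential at once, and confirming that the canonical splitting is genuinely topological and not merely algebraic. Beyond this bookkeeping I do not anticipate a serious obstacle.
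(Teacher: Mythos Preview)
Your proposal is correct and follows essentially the same approach as the paper: the key decomposition $a=(a-x\,da)+x\,da$ with $a-x\,da$ and $da$ closed is exactly what the paper uses, and your injectivity argument (apply $d$ to recover $a_1$, then $a_0$) is the paper's uniqueness statement unpacked. You supply more detail than the paper---the explicit dg-algebra compatibility, the Koszul signs, and the topological upgrade---but the underlying idea is identical.
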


\begin{proof}
Any element $w \in A$ may be written $w=(w-xdw)+xdw$, where $w-xdw, dw$ are closed. Conversely, given $w=a+xb$, with $a,b$ closed, we have $b=dw$ and $a=w-xdw$.
\end{proof}

\begin{definition}
Let $A$ be an object in $\mathcal{E}$, with unique maximal ideal $M$.   The full Hinich subalgebra $A^H$ consists of all elements $a$, such that $da\in M$.
\end{definition}

Note that if $A$ is a Hinich algebra then $A^H=A$.  Conversely if $A$ is an acyclic algebra, then $A^H$ has codimension 1 in $A$.

\begin{lemma}
Both $A^H$ and $A^0$ are Hinich algebras.
\end{lemma}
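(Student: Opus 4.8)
The goal is to show that for any object $A$ in $\mathcal{E}$, both the full Hinich subalgebra $A^H$ and the closed subalgebra $A^0$ are Hinich algebras, i.e.\ local pseudo--compact dg algebras whose maximal ideal is closed under the differential. The plan splits naturally according to Lemma~\ref{twotypes}: either $A$ is already a Hinich algebra, in which case $A^H = A$ is done immediately and $A^0$ is the only case needing work, or $A$ is acyclic, in which case both subalgebras require attention. By Lemma~\ref{acyc} the acyclic case carries the normal form $A = A^0 \otimes \Lambda$, which I expect to be the workhorse of the argument.

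First I would dispose of the structural claims. In both cases I must check three things for each of $A^H$ and $A^0$: that the set is a dg subalgebra (closed under multiplication, the differential, and containing $1$), that it is pseudo--compact (an inverse limit of finite--dimensional dg algebras, inherited from $A$), and that it is \emph{local} with differential--closed maximal ideal. For $A^0$, observe that $A^0$ consists of all closed elements; it is visibly closed under the differential since $d$ restricted to $A^0$ is zero, so any maximal graded ideal of $A^0$ is trivially $d$--closed. The substantive point is that $A^0$ is \emph{local}, i.e.\ has a unique maximal graded ideal. For $A^H$, the differential does \emph{not} vanish, so I must separately verify that its maximal ideal is $d$--closed; this is exactly where the defining condition $da \in M$ pays off.

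The key computations run as follows. For locality of $A^0$, let $M$ be the unique maximal graded ideal of $A$ and set $M^0 = M \cap A^0$. An element $a \in A^0$ not in $M^0$ is then not in $M$, hence invertible in $A$ with inverse $a^{-1}$; since $a$ is closed, $0 = d(1) = d(a a^{-1}) = a\, d(a^{-1})$, and multiplying by $a^{-1}$ gives $d(a^{-1}) = 0$, so $a^{-1} \in A^0$. Thus every element of $A^0$ outside $M^0$ is invertible in $A^0$, making $M^0$ the unique maximal graded ideal and $A^0$ local. For $A^H$, let $M^H = M \cap A^H$; by definition $A^H = \{a : da \in M\}$, so for any $a \in A^H$ we have $da \in M$, and since $A^H \subseteq A$ gives $da \in A^H$ as well (as $d(da)=0 \in M$), we get $da \in M \cap A^H = M^H$, proving $M^H$ is $d$--closed. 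Locality of $A^H$ follows because an element of $A^H$ is invertible in $A$ precisely when it lies outside $M$, and I would check its inverse again lies in $A^H$; in the acyclic case the normal form $A = A^0 \otimes \Lambda$ makes both $A^H$ and $A^0$ completely explicit, since $A^H = A^0 \oplus A^0 x$ with $x$ the degree--one element satisfying $dx = 1$, from which membership and the ideal structure can be read off directly.

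The main obstacle I anticipate is not any single hard estimate but rather the bookkeeping of \emph{which} maximal ideal is meant and verifying $d$--closedness of $M^H$ cleanly, since $A^H$ is not itself $d$--closed under the full differential of $A$ (only the condition $da \in M$ holds, not $da \in A^H$ automatically). I would need to confirm that $A^H$ is indeed a subalgebra stable under $d$: if $a \in A^H$ then $d a \in M \subseteq A^H$ (every element of $M$ satisfies $d(M) \subseteq A$ and lands in $A^H$ because $M$ is inside the $da$--condition), so $A^H$ is a genuine dg subalgebra. Once stability and the explicit acyclic normal form are in hand, the locality and $d$--closedness arguments are short invertibility computations of the type already used in the proof of Lemma~\ref{twotypes}, so I expect the proof to be brief.
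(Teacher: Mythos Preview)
Your approach is essentially the paper's: set $M^H = M \cap A^H$, $M^0 = M \cap A^0$, and show that any homogeneous element outside these is a unit by checking that its inverse in $A$ stays in the subalgebra, via the Leibniz rule. Your computation for $A^0$ matches the paper exactly. For $A^H$ you only say ``I would check its inverse again lies in $A^H$'' and then detour through the acyclic normal form; the paper instead just repeats the Leibniz trick uniformly: if $x \in A^H$ has inverse $y$, then $0 = d(xy) = (dx)y \pm x(dy)$ with $dx \in M$, so $x(dy) \in M$, whence $dy = y\cdot x(dy) \in M$ and $y \in A^H$. This works regardless of whether $A$ is Hinich or acyclic, so there is no need to split cases or invoke Lemma~\ref{acyc}.

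One slip to flag: in the acyclic case you write $A^H = A^0 \oplus A^0 x$, but that is all of $A$. With $A = A^0 \otimes \Lambda(x)$ and $d(a+bx)=b$, the maximal ideal of $A$ is $M^0 \oplus A^0 x$ (where $M^0$ is the maximal ideal of $A^0$), so $a+bx \in A^H$ iff $b \in M^0$, giving $A^H = A^0 \oplus M^0 x$. This matches the later description $A^H = k \oplus I[x]/x^2$ in the paper's Lemma~\ref{acycacyc}. Since your main argument does not actually rely on this explicit form, the error is harmless, but you should correct it if you keep that remark.
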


\begin{proof}
Again let $M$ denote the maximal ideal in $A$.  Let $M^H, M^0$ denote the intersections of $M$ with $A^H$ and $A^0$ respectively.

If $x\in A^H$ is not an element of $M^H$, then we have $y\in A$ such that $xy=1$.  We need to show that $y\in A^H$, in order to deduce that $x$ is a unit in $A^H$, and hence that $A^H$ is local.

We have $0=d(xy)=(dx)y\pm x(dy)$.  Thus $x(dy)\in M$.  In particular $yx(dy)=\pm dy\in M$ and $y \in A^H$ as required.

Similarly, given $x \in A^0$ with $x$ not an element of $M^0$, we have $y\in A$ with $xy=1$ and we need to show that $y \in A^0$, to deduce that $x$ is a unit and that $A^0$ is local.

Again we have $0= (dx)y\pm x(dy)=\pm x(dy)$.  Hence, as before $dy=\pm yx(dy)=0$ and $y\in A^0$ as required.
\end{proof}

\begin{lemma} \label{morph}
Let $f\colon A \to B$ be a morphism in $\mathcal{E}$.  Then $f$ restricts to a morphism $f^H\colon A^H \to B^H$.  Conversely given $x\notin A^H$, we have $f(x)\notin B^H$.

\end{lemma}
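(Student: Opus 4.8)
The plan is to deduce both halves of the lemma from a single structural fact: every morphism $f\colon A\to B$ in $\mathcal{E}$ is \emph{local}, in the sense that $f^{-1}(M_B)=M_A$ (and in particular $f(M_A)\subseteq M_B$), where $M_A$ and $M_B$ denote the unique maximal graded ideals of $A$ and $B$. Granting this, the forward statement follows because $f$ commutes with the differential, and the converse follows because $f$ preserves units; neither half needs continuity of $f$, only that it is a unital dg-algebra map.

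First I would establish that $f$ is local. The essential input is that the residue of a local pseudo--compact dg algebra is the ground field: writing $A=\varprojlim_i A/I_i$ with each $A/I_i$ finite dimensional and local, each such quotient has residue field $k$ (a finite extension of the algebraically closed $k$ being $k$, and a nonzero-degree homogeneous element never being a unit), so $A/M_A\cong k$, and likewise $B/M_B\cong k$. Let $\epsilon_B\colon B\to B/M_B=k$ be the augmentation. Then $\epsilon_B\circ f\colon A\to k$ is a unital algebra homomorphism onto a field, so its kernel $f^{-1}(M_B)$ is a graded ideal of codimension one, hence a maximal graded ideal of $A$. Since $A$ is local, this kernel must coincide with $M_A$, giving $f^{-1}(M_B)=M_A$. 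I expect this to be the crux of the argument, as it is exactly here that the \emph{uniqueness} of the maximal graded ideal is used. Note that preservation of units by itself yields only the inclusion $f^{-1}(M_B)\subseteq M_A$, which is the easy direction; the real content lies in the reverse inclusion $M_A\subseteq f^{-1}(M_B)$ supplied by the augmentation.

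With locality in hand, the forward statement is routine: if $a\in A^H$ then $da\in M_A$, so $d f(a)=f(da)\in f(M_A)\subseteq M_B$, whence $f(a)\in B^H$. Thus $f$ restricts to a morphism $f^H\colon A^H\to B^H$.

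For the converse, take $x\notin A^H$, so that $dx\notin M_A$. Since $M_A$ is a graded subspace, some homogeneous component of $dx$ lies outside $M_A$, and in the local pseudo--compact dg algebra $A$ a homogeneous element outside the maximal ideal is a unit. Its image under the unital map $f$ is then a unit of $B$, hence avoids $M_B$; and because $f$ preserves degree this image is a homogeneous component of $f(dx)=d f(x)$. As $M_B$ is graded, it follows that $d f(x)\notin M_B$, i.e.\ $f(x)\notin B^H$, completing the argument.
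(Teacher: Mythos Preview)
Your proof is correct and follows essentially the same route as the paper: both hinge on the observation that the composite $A \xrightarrow{f} B \to B/M_B \cong k$ has kernel $M_A$ (your ``locality'' statement $f^{-1}(M_B)=M_A$), and both dispatch the converse via preservation of units. You are somewhat more explicit than the paper in justifying why the residue field is $k$ and in treating possibly non-homogeneous elements, but the underlying argument is the same.
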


\begin{proof}
If $x\in A$ satisfies $dx\in M$, then consider the image of $f(dx)$ in $B/N \cong k$, where $N$ is the maximal ideal in $B$.  This image must be 0, as any surjective map from $A$ to a field will have kernel $M$.  Thus $d(f(x))=f(dx)\in N$ and $f(x)\in B^H$.

For the converse, note that if $dx$ is a unit, then $d(f(x))=f(dx)$ is also a unit.
\end{proof}

In particular, Lemma \ref{morph} implies that there are no morphisms in $\mathcal{E}$ from an acyclic algebra to a Hinich algebra, and any morphism from a Hinich algebra $A$ to an acyclic algebra $B$ must factor through $B^H$.

\begin{lemma} \label{colim} The category $\mathcal{E}$ contains all (small) colimits.
\end{lemma}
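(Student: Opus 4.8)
The plan is to reduce the assertion to the existence of all small coproducts and of all coequalizers, since a category possessing both — the empty coproduct supplying an initial object — possesses all small colimits via the usual coequalizer-of-coproducts construction. Throughout I would build these colimits inside the ambient category of all pseudo--compact dg algebras, where the coproduct is the completed tensor product and the coequalizer of $f,g\colon A\to B$ is the quotient $B/\overline{I}$ by the closed dg ideal $\overline I$ generated by $\{f(a)-g(a)\}$, and then verify that the result is again local, so that it lies in $\mathcal E$. Once an ambient colimit of an $\mathcal E$--diagram is shown to lie in the full subcategory $\mathcal E$, it automatically satisfies the universal property relative to $\mathcal E$, and no separate verification against objects outside $\mathcal E$ is needed.

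For coproducts, given a family $\{A_i\}$ in $\mathcal E$ I would take $A=\widehat{\bigotimes}_{i}A_{i}$. The universal property of the completed tensor product identifies continuous unital algebra maps $A\to D$ with families of such maps $A_i\to D$, for every pseudo--compact target $D$, so $A$ is the coproduct in the ambient category. It then remains to see that $A\in\mathcal E$: each $A_i$ has residue field $k$ (as already used in the proof of Lemma \ref{morph}, every surjection onto a field has kernel the maximal ideal), and reducing each factor modulo its maximal ideal exhibits a surjection $A\to k$ whose kernel is the unique maximal graded ideal; hence $A$ is local, and by Lemma \ref{twotypes} it is a Hinich or an acyclic algebra. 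In fact $A$ is acyclic whenever some factor $A_i$ is, since it then contains an element $x$ with $dx=1$ (cf.\ Lemma \ref{acyc}), but for cocompleteness only locality is needed.

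For coequalizers, the crucial point — and the step I expect to be the main obstacle — is to rule out degeneration to the zero algebra, which is not an object of $\mathcal E$. Here I would use that every object of $\mathcal E$ admits a \emph{unique} morphism to the residue field $k$, namely the projection onto $A/M$, because any unital map to $k$ is surjective with kernel the maximal ideal. Consequently the two composites $A\xrightarrow{f,g} B\to k$ coincide, so $f(a)-g(a)\in M_B$ for all $a$. Thus the generating set of $\overline I$ lies in $M_B$, whence $\overline I\subseteq M_B\subsetneq B$ and $B/\overline I$ is a nonzero local pseudo--compact dg algebra. The differential descends because $d(f(a)-g(a))=f(da)-g(da)$ is again of the generating form, so $\overline I$ is genuinely a dg ideal; and the universal property of the quotient shows $B/\overline I$ is the coequalizer in the ambient category, hence in $\mathcal E$.

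Finally I would assemble these observations: the empty coproduct is the ground field $k$, which is a Hinich algebra and the initial object of $\mathcal E$, and the presence of all small coproducts together with all coequalizers yields every small colimit. The only point demanding genuine care is the non--degeneracy in the coequalizer step above; the locality of completed tensor products is routine once the residue field of each factor is identified with $k$.
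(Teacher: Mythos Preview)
Your approach is exactly the paper's --- reduce to coproducts (completed tensor products) and coequalizers (quotient by the ideal generated by $f(a)-g(a)$) --- and you are more careful than the paper in checking that the results remain in $\mathcal{E}$. However, your non-degeneracy argument for the coequalizer has a flaw: you assert that every object of $\mathcal{E}$ admits a unique morphism to $k$, namely the projection $A\to A/M$, but for an acyclic algebra this projection is \emph{not} a dg map (the maximal ideal is not closed under $d$), and in fact by Lemma~\ref{morph} an acyclic algebra admits \emph{no} morphism in $\mathcal{E}$ to the Hinich algebra $k$. So when $B$ is acyclic the composite ``$B\to k$'' you invoke does not exist in $\mathcal{E}$.

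The conclusion $f(a)-g(a)\in M_B$ is nevertheless correct; you just need to run the argument at the level of graded algebras, forgetting the differential. The graded-algebra projection $B\to B/M_B\cong k$ always exists, and the composite $A\to B\to k$ is a surjection onto a field, hence has kernel $M_A$ --- this is precisely the reasoning already used in the proof of Lemma~\ref{morph} that you cite. Since $f$ and $g$ agree on scalars and send $M_A$ into $M_B$, the ideal they generate lies in $M_B$ and the quotient is nonzero and local. With that adjustment your argument goes through and matches the paper's (terse) proof.
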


\begin{proof}
It suffices to show that $\mathcal{E}$ contains all coproducts and coequalisers.  The coproduct in $\mathcal{E}$ is just the tensor product.  The coequaliser of two maps $f\colon A\to B$ is just the quotient of $B$ by the ideal generated by elements of the form $f(a)-g(a),\, a \in A$.
\end{proof}

Given a diagram $D$ in $\mathcal{E}$, let $D^0,D^H$ denote the corresponding diagrams of subalgebras.  

We will construct a limit for $D$ by considering separately the case where there is a cone $\Lambda(x) \to D$ and the case where there is not.  Note that by Lemma \ref{morph}, if even one of the objects in $D$ is a Hinich algebra, then we are in the second case.

\begin{lemma}\label{nomap}
Suppose there is no cone $\Lambda(x) \to D$.  Then we have $\varprojlim(D)=\varprojlim(D^H)$, the limit in the usual Hinich category.
\end{lemma}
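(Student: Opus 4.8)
The statement to prove is Lemma~\ref{nomap}: under the hypothesis that there is no cone $\Lambda(x) \to D$, the limit of $D$ in $\mathcal{E}$ agrees with the limit of the associated Hinich-subalgebra diagram $D^H$, taken in the ordinary Hinich category. The plan is to exhibit $\varprojlim(D^H)$, which exists because the Hinich category is complete (Hinich's CMC is in particular complete), together with its natural projections, and to verify that it satisfies the universal property of the limit of $D$ in $\mathcal{E}$. The key structural fact I would lean on throughout is Lemma~\ref{morph}: every morphism in $D$ restricts to the full Hinich subalgebras, so $D^H$ is a genuine subdiagram and the inclusions $A^H \hookrightarrow A$ assemble into a natural transformation $D^H \to D$.

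**The main step: a cone over $D$ is the same as a cone over $D^H$.** First I would observe that a Hinich algebra $L := \varprojlim(D^H)$ is an object of $\mathcal{E}$ (being a Hinich algebra), and composing its projections $L \to A^H$ with the inclusions $A^H \hookrightarrow A$ gives a cone $L \to D$ in $\mathcal{E}$. The heart of the argument is to show this cone is terminal among all cones over $D$ in $\mathcal{E}$. So let $T$ be any object of $\mathcal{E}$ equipped with a cone $T \to D$. I claim this cone necessarily factors through $D^H$, i.e.\ each leg $T \to A$ lands in $A^H$. If $T$ is itself a Hinich algebra, then by Lemma~\ref{morph} every morphism out of $T$ already factors through the full Hinich subalgebra of its target, so each leg maps $T \to A^H$ and we are done. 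The delicate case is when $T$ is an acyclic algebra. Here I would use the hypothesis: if $T$ is acyclic, then $T$ contains $\Lambda(x)$, and a cone $T \to D$ would restrict along $\Lambda(x) \hookrightarrow T$ to a cone $\Lambda(x) \to D$, contradicting the assumption that no such cone exists. Hence no acyclic algebra admits a cone to $D$ at all, and every cone source $T$ must be a Hinich algebra, reducing us to the first case.

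**Assembling the universal property.** Once every cone $T \to D$ is known to factor through $D^H$, the factorization $T \to A^H$ is a cone over $D^H$, which by the universal property of $L = \varprojlim(D^H)$ in the Hinich category yields a unique map $T \to L$ commuting with all the projections. Composing with the inclusions recovers the original legs $T \to A$, so this map is compatible with the cone over $D$; uniqueness transfers directly from uniqueness in the Hinich limit, since the inclusions $A^H \hookrightarrow A$ are monomorphisms and any two factorizations agree after composing with them. This establishes that $L$ with its induced cone is the limit of $D$ in $\mathcal{E}$, giving $\varprojlim(D) = \varprojlim(D^H)$ as claimed.

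**Anticipated obstacle.** I expect the subtle point to be the acyclic case — precisely pinning down that an acyclic cone-source $T$ really does force a cone $\Lambda(x) \to D$. The inclusion $\Lambda(x) \hookrightarrow T$ relies on the description of acyclic algebras via Lemma~\ref{acyc} (where every acyclic algebra contains $\Lambda(x)$ with $dx=1$), and one must check this inclusion is a morphism in $\mathcal{E}$ so that precomposition yields an honest cone. The remaining bookkeeping (that $D^H$ is a well-defined diagram, that inclusions are natural, and that limits in the Hinich category exist) is routine given Lemma~\ref{morph} and the completeness of Hinich's CMC, so the entire weight of the proof rests on using the no-cone hypothesis to exclude acyclic cone-sources.
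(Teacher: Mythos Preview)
Your proposal is correct and follows essentially the same argument as the paper: both rule out acyclic cone-sources by noting that any such $T$ would yield a cone $\Lambda(x)\to T\to D$, then use Lemma~\ref{morph} to factor the remaining (Hinich) cones through $D^H$, and finally invoke injectivity of the inclusions $A^H\hookrightarrow A$ for uniqueness. The paper's version is slightly terser but the logical structure is identical.
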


\begin{proof}
Given a cone $X \stackrel f\to  D$, we know that $X$ is a Hinich algebra, as otherwise we would have a cone $\Lambda(x) \to X \stackrel f\to D$.  By Lemma \ref{morph} we have that $f$ factorizes $X \stackrel {f'}\to D^H \stackrel\iota \to D$, where $\iota$ denotes the inclusion on each object of $D$.  Then $f'$ factorizes uniquely through the cone $\varprojlim(D^H) \to D^H$:

$$
\xymatrix{X\ar@/_/[ddr]_f \ar[dr]_{f'}\ar@{.>}[r] & \varprojlim(D^H)\ar[d]\\&D^H\ar[d]_{\iota}\\&D
}
$$

As the component maps of $\iota$ are injective, this is the unique factorization of $f$ through the cone $\varprojlim{D^H}\to D^H \stackrel \iota \to D$.
\end{proof}

Suppose there is a cone $g\colon\Lambda(x) \to D$.  Then let $L=\varprojlim(D^0) \otimes \Lambda(x)$, and let $l\colon L \to D$ be the cone induced by $g$ and $\varprojlim{D^0}\to D^0 \stackrel \iota \to D$, where $\iota$ denotes the inclusion on each object of $D$:

$$
\xymatrix{\varprojlim{D^0} \ar[d]\ar[r] & L\ar[d]^l& \Lambda(x)\ar[dl]^g\ar[l]\\D^0\ar[r]^\iota&D
}
$$

\begin{lemma}\label{vani} Let $A$ be a Hinich algebra with vanishing differential and let $f\colon A \to D$ be a cone.  Then $f$ factors uniquely through $l$.
\end{lemma}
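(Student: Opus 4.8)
The plan is to exploit the vanishing differential on $A$ to reduce the problem to the universal property of the limit $\varprojlim(D^0)$ in the ordinary Hinich category, and then to transport the resulting map into $L=\varprojlim(D^0)\otimes\Lambda(x)$ through the unit of $\Lambda(x)$.

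For existence, I would first observe that since $da=0$ for every $a\in A$, each component $f_i\colon A\to D_i$ of the cone satisfies $d(f_i(a))=f_i(da)=0$, so $f_i$ lands in the closed subalgebra $D_i^0$. As dg algebra maps send closed elements to closed elements, the morphisms of $D$ restrict to those of $D^0$, and hence $f$ corestricts to a cone $f^0\colon A\to D^0$ of Hinich algebras. Since $A$ is itself a Hinich algebra (its maximal ideal is trivially closed under the zero differential), the universal property of the Hinich-category limit yields a unique $\psi\colon A\to\varprojlim(D^0)$ whose composite with the projection to $D^0$ is $f^0$. I would then set $\phi=(\,\cdot\otimes 1)\circ\psi\colon A\to L$ and verify $l\circ\phi=f$ directly: by construction $l_i$ sends $b\otimes 1$ to $\mathrm{proj}_i(b)\cdot g_i(1)=\mathrm{proj}_i(b)$ (using $g_i(1)=1$), so $l_i(\phi(a))=\mathrm{proj}_i(\psi(a))=f^0_i(a)=f_i(a)$.

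The substance of the lemma is uniqueness, and the key point I expect to need is an identification of the closed elements of $L$. Writing $B=\varprojlim(D^0)$ and noting that each $D_i^0$ carries the zero differential (its elements are by definition those killed by $d$), the limit $B$ also has vanishing differential, since the differential on $B$ is induced componentwise. Consequently, for $b_0+b_1x\in B\otimes\Lambda(x)$ one computes $d(b_0+b_1x)=\pm b_1$, so the closed elements of $L$ are exactly $B\otimes 1$. Given any factorization $\phi'$ with $l\circ\phi'=f$, the vanishing differential on $A$ forces $\phi'(a)$ to be closed for every $a$, whence $\phi'$ factors as $(\,\cdot\otimes 1)\circ\psi'$ for a unique $\psi'\colon A\to B$. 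The relation $l\circ\phi'=f$ then reads $\mathrm{proj}_i\circ\psi'=f^0_i$ for all $i$, and uniqueness in the Hinich-category universal property gives $\psi'=\psi$, hence $\phi'=\phi$.

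The only genuinely delicate step is the computation of the closed elements of $L$, which is precisely where the hypothesis that $A$ (and therefore each $D_i^0$, and hence $B$) has vanishing differential is essential; everything else is a routine transcription of the ordinary Hinich limit and its universal property.
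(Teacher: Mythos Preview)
Your argument is correct and follows essentially the same route as the paper. Both proofs observe that $f$ lands in $D^0$ (since $dA=0$), invoke the universal property of $\varprojlim(D^0)$ to obtain the factorization, and then for uniqueness use that any map $A\to L$ must land in $L^0=\varprojlim(D^0)\otimes 1$; you spell out the computation of $L^0$ explicitly, whereas the paper simply asserts it, and the paper in turn makes explicit the injectivity of $\iota\colon D^0\hookrightarrow D$ that you use implicitly when passing from $l\circ\phi'=f$ to $\mathrm{proj}_i\circ\psi'=f^0_i$.
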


\begin{proof}
As $f$ factors through $D^0$ and hence $\varprojlim{D^0}$, we have that the following diagram commutes, for some $f' \colon A \to \varprojlim{D^0}$:
$$
\xymatrix{A\ar[dr]^{f'}\ar@/_4pc/[ddrr]_f\\
&\varprojlim{D^0} \ar[d]\ar[r] & L\ar[d]^l& \Lambda(x)\ar[dl]^g\ar[l]\\&D^0\ar[r]^\iota&D
}
$$
Now any different morphism $f''\colon A \to L$ making the diagram commute, must factor through $\varprojlim{D^0}$, as $L^0=\varprojlim{D^0}$.  As the components of $\iota$ are injective, the induced map $A \to \varprojlim{D^0}$ must in fact be $f'$, and $f''$ is not a different morphism after all.
\end{proof}

\begin{lemma}\label{lamby}
Let $f\colon \Lambda(y) \to D$ be a cone.  Then $f$ factors uniquely through $l$.
\end{lemma}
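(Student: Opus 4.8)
The plan is to exploit the reference cone $g\colon\Lambda(x)\to D$ used to build $L$ in order to reduce an arbitrary cone out of $\Lambda(y)$ to data living in $\varprojlim(D^0)$. First I would record what a cone $f\colon\Lambda(y)\to D$ amounts to: on each object $D_i$ of $D$ the component $f_i$ is a unital dg-algebra map $\Lambda(y)\to D_i$, hence determined by the single element $c_i:=f_i(y)$, which is homogeneous of degree $1$, squares to zero, and satisfies $dc_i=f_i(dy)=f_i(1)=1$. Compatibility with the structure maps of $D$ says the family $(c_i)$ is carried to itself along $D$; the same holds for the family $b_i:=g_i(x)$ coming from the fixed cone $g$, which also satisfies $db_i=1$.

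The key step is to set $z_i:=c_i-b_i$. Then $dz_i=1-1=0$, so $z_i$ is a closed degree-$1$ element, i.e. $z_i\in D_i^0$, and since both $(c_i)$ and $(b_i)$ are compatible along $D$ so is $(z_i)$; hence $z:=(z_i)$ defines a closed degree-$1$ element of $\varprojlim(D^0)$. I would then define the candidate factorization $\bar f\colon\Lambda(y)\to L=\varprojlim(D^0)\otimes\Lambda(x)$ by $\bar f(y)=z\otimes 1+1\otimes x$. That this is a morphism in $\mathcal{E}$ is a short check: $\varprojlim(D^0)$ carries the zero differential, so $d\bar f(y)=1\otimes dx=1$, matching $\bar f(dy)=\bar f(1)=1$; and $\bar f(y)^2=0$ because $z\otimes 1$ and $1\otimes x$ are both odd, so their squares vanish (we are in characteristic $0$) and their cross terms cancel by graded commutativity. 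Evaluating $l$ on the $i$-th component gives $l_i\bar f(y)=z_i+b_i=c_i=f_i(y)$, so $l\circ\bar f=f$.

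For uniqueness, suppose $\bar f'\colon\Lambda(y)\to L$ also satisfies $l\circ\bar f'=f$. Writing $\bar f'(y)=p\otimes 1+q\otimes x$ with $p$ of degree $1$ and $q$ of degree $0$ in $\varprojlim(D^0)$, the vanishing of the differential on $\varprojlim(D^0)$ forces $d\bar f'(y)=q\otimes 1$; the requirement $d\bar f'(y)=1$ then pins down $q=1$. Comparing with $l$ componentwise gives $p_i+b_i=c_i$, whence $p=z$ and $\bar f'=\bar f$.

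I expect the only genuinely delicate points to be bookkeeping rather than conceptual: getting the signs right in the square-zero and differential computations for $\bar f(y)$, and justifying in the uniqueness argument that the coefficient of $x$ is forced to be the unit (which is exactly where the differential on $\Lambda(x)$ does its work). The conceptual heart --- that subtracting the fixed cone $g$ lands the discrepancy in the closed subalgebra, and hence in $\varprojlim(D^0)$ --- is the same mechanism that should later handle general acyclic algebras via Lemma \ref{acyc}.
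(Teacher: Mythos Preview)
Your proof is correct and follows essentially the same route as the paper's. The paper packages the construction of your element $z$ by forming the coproduct $\Lambda(x,y)$, observing that $y-x\in\Lambda(x,y)^0$, and invoking Lemma~\ref{vani} to obtain its image $\alpha\in\varprojlim(D^0)$; you instead compute $z_i=c_i-b_i$ directly on components, which amounts to the same thing. The factorization $y\mapsto z\otimes 1+1\otimes x$ is identical to the paper's $y\mapsto\alpha\otimes 1+1\otimes x$, and your uniqueness argument (forcing the $x$-coefficient to be $1$ via the differential, then pinning down $p$ via $l$) makes explicit what the paper leaves implicit when it asserts that any other factorization differs by some $\beta\otimes 1$.
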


\begin{proof}
The cones $f,g$ from $\Lambda(x),\, \Lambda(y)$ induce a cone from the coproduct $\Lambda(x,y)$.  The restriction of this to $\Lambda(x,y)^0$ factors through $\varprojlim{D^0}$, by Lemma \ref{vani}.  We obtain the following commuting diagram:

$$
\xymatrix{&&\Lambda(x,y)^0\ar[r]\ar[dll]&\Lambda(x,y)\ar@/___9pc/[ddll]\\
\varprojlim{D^0} \ar[d]\ar[r] & L\ar[d]^l& \Lambda(x)\ar[ur]\ar[dl]^g\ar[l]& \Lambda(y)\ar[u]\ar@/___/[dll]^f\\
D^0\ar[r]^\iota&D
}
$$
\vspace{1.52cm}

Let $\alpha\in\varprojlim{D^0}$ denote the image of $y-x\in \Lambda(x,y)^0$.  We define a map $h\colon \Lambda(y) \to L$ by $h\colon y \mapsto \alpha\otimes 1 +1\otimes x$.

Then $f=lh$ as $lh(y)=(f(y)-g(x))+g(x)=f(y)$.

Any other morphism $h'\colon \Lambda(y) \to L$ factorizing $f$ would map $y\mapsto h(y)+\beta\otimes 1$, for some $\beta\in\varprojlim {D^0}$ satisfying $l(\beta \otimes 1)=0$.  As the component maps of $\iota$ are injective, we have that the image of $\beta$ in each object of $D^0$ is 0.  Therefore $\beta=0$ and $h'=h$.
\end{proof}

\begin{lemma}\label{ifmap}
We have $L=\varprojlim{D}$.
\end{lemma}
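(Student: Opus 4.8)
The plan is to verify directly that the cone $l\colon L\to D$ is terminal among all cones over $D$ whose apex lies in $\mathcal{E}$; that is, that every cone $f\colon X\to D$ with $X\in\mathcal{E}$ factors uniquely through $l$. Since there is a cone $g\colon\Lambda(x)\to D$, Lemma \ref{morph} forces every object of $D$ to be acyclic, but the test object $X$ may be either acyclic or Hinich, and I would organise the argument around this dichotomy. The two preceding lemmas already dispose of the two building blocks of an acyclic algebra, so the work is to (i) assemble them for an arbitrary acyclic $X$, and (ii) reduce the Hinich case to the acyclic one.

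For acyclic $X$, Lemma \ref{acyc} gives $X=X^{0}\otimes\Lambda(x_{X})$ with $dx_{X}=1$, which by Lemma \ref{colim} is exactly the coproduct $X^{0}\sqcup\Lambda(x_{X})$ in $\mathcal{E}$. Consequently a cone $f\colon X\to D$ is the same datum as the pair consisting of its restrictions $f|_{X^{0}}\colon X^{0}\to D$ and $f|_{\Lambda(x_{X})}\colon\Lambda(x_{X})\to D$, with no further compatibility required. The first restriction factors uniquely through $l$ by Lemma \ref{vani} (note that $X^{0}$ is a Hinich algebra with vanishing differential), and the second by Lemma \ref{lamby}; the universal property of the coproduct then glues these into a unique $\tilde f\colon X\to L$ with $l\tilde f=f$.

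For the Hinich case I would pass to the acyclic closure $\bar X=X\otimes\Lambda(z)$ with $dz=1$, together with the inclusion $X\hookrightarrow\bar X$. One extends $f$ to a cone $\bar f\colon\bar X\to D$ by declaring $z\mapsto g(x)$; this is well defined and is genuinely a cone because $g(x)^{2}=0$ and $dg(x)=1$ in each object of $D$ and $g$ is itself a cone, so the new component maps remain compatible with the structure maps of $D$. Applying the acyclic case to $\bar f$ produces a unique factorization $\widetilde{\bar f}\colon\bar X\to L$, and restricting along $X\hookrightarrow\bar X$ yields $\tilde f$ with $l\tilde f=f$, giving existence. For uniqueness, any $\tilde f$ factoring $f$ extends to a map $\bar X\to L$ via $z\mapsto x\in L$, and this extension factors $\bar f$ through $l$ (on $X$ it is $f$, and $l(x)=g(x)$ since $l|_{\Lambda(x)}=g$); the uniqueness already proved in the acyclic case then forces this extension, hence $\tilde f$ itself, to be unique.

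The main obstacle is precisely the Hinich case: unlike an acyclic algebra, a general Hinich algebra is not a coproduct involving a copy of $\Lambda$, so Lemmas \ref{vani} and \ref{lamby} cannot be combined directly. The device of the acyclic closure $X\otimes\Lambda(z)$ circumvents this, and the two points needing care are that the extended family $\bar f$ really is a cone (compatibility of the assignments $z\mapsto g(x)$ across the diagram) and that uniqueness genuinely descends from $\bar X$ back to $X$.
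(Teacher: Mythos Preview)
Your proposal is correct and follows essentially the same approach as the paper: the acyclic case is handled identically via the coproduct decomposition $X=X^{0}\sqcup\Lambda$ together with Lemmas~\ref{vani} and~\ref{lamby}, and the Hinich case is reduced to the acyclic one by tensoring with $\Lambda$ and using the uniqueness already established there. The only cosmetic difference is that you introduce a fresh generator $z$ for the auxiliary copy of $\Lambda$, whereas the paper reuses $x$ and the given cone $g$ directly; the arguments are otherwise the same.
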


\begin{proof}
Firstly, let $A$ be an acyclic algebra and let $f\colon A \to D$ be a cone.  By Lemma \ref{acyc} we know that $A$ is the coproduct of $A^0$ and $\Lambda(y)$.  The restrictions of $f$ to $A^0$ and $\Lambda(y)$, factor through $l$ uniquely, by Lemmas \ref{vani} and \ref{lamby} respectively.  These induce a map $h\colon A \to L$ which is then the unique factorization of $f$ through $l$: $$
\xymatrix{A^0\ar[dr]\ar@/_/[ddr]&&\Lambda(y)\ar[dl]\ar@/___/[ddl]\\
&A\ar@/_/[dd]_f\ar@{.>}[d]^h\\
&L\ar[d]_l\\
&D
}
$$
Now let $A$ be a Hinich algebra and let $f\colon A \to D$ be a cone.  We have a cone $A \otimes \Lambda(x) \to D$ induced by $f,g$, which factors through $l$ as $A \otimes \Lambda(x)$ is acyclic:
$$
\xymatrix{A\ar[dr]\ar@/_/[dddr]_f&&\Lambda(x)\ar[dl]\ar@/___/[dddl]^g\\
&A\otimes\Lambda(x)\ar@{.>}[d]_h\\
&L\ar[d]_l\\
&D
}
$$
Let $f'\colon A \to L$ be a different map which factorises $f$ through $l$.  Together with the natural inclusion $\Lambda(x) \to L$, there is induced a map $h'\colon A\otimes \Lambda(x) \to L$.  Now $h'$ also factorizes the cone $A \otimes \Lambda(x) \to D$.  Thus by the uniqueness of this factorization $h'=h$ and $f'$ is not a different map after all.
\end{proof}

Thus by Lemma \ref{colim} we know $\mathcal{E}$ contains all (small) colimits and by Lemmas \ref{nomap} and \ref{ifmap} we know $\mathcal{E}$ contains all (small) limits.  Note that by Lemma \ref{nomap} the product $A \times k= A^H$, for any algebra $A$ in $\mathcal{E}$.  Whilst $k$ was both an initial and a terminal object in the Hinich category, in $\mathcal{E}$, it is just an initial object, and we have a new terminal object:

\begin{lemma}
The terminal object in $\mathcal{E}$ is $\Lambda(x)$.
\end{lemma}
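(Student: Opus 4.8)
The plan is to exhibit a unique morphism $A\to\Lambda(x)$ for every object $A$ of $\mathcal{E}$. The cleanest route is to observe that a terminal object is precisely the limit of the empty diagram $D=\emptyset$, and to feed this into the construction preceding Lemma \ref{ifmap}. A cone $\Lambda(x)\to D$ exists vacuously, so we are in the case treated there, with $\varprojlim(D^0)$ equal to the limit of the empty diagram in the ordinary Hinich category, namely its terminal object $k$. Then $L=\varprojlim(D^0)\otimes\Lambda(x)=k\otimes\Lambda(x)\cong\Lambda(x)$, and Lemma \ref{ifmap} identifies $L$ with $\varprojlim(D)$, so $\Lambda(x)$ is terminal. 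Before relying on this I would check that the proofs of Lemmas \ref{vani}, \ref{lamby} and \ref{ifmap} survive the degenerate case $D=\emptyset$; their only inputs are $\varprojlim(D^0)=k$ and injectivity of the structure maps of $D^0$, both of which hold trivially when $D$ is empty.

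Since the empty-diagram reading may look slippery, I would back it up with a direct verification. For uniqueness, given any morphism $f\colon A\to\Lambda(x)$ I would compose with the projection $\epsilon\colon\Lambda(x)\to\Lambda(x)/(x)=k$. The composite $\epsilon f\colon A\to k$ is a unital algebra map whose kernel is a graded maximal ideal, hence equals the unique maximal ideal $M$ of $A$; thus $\epsilon f$ is the canonical augmentation $\pi\colon A\to A/M\cong k$. As $\Lambda(x)$ is concentrated in degrees $0$ and $1$, this already forces $f(a)=\pi(a)\cdot 1$ for $a\in A_0$ and $f(a)=0$ in all degrees other than $0,1$; and for $a\in A_1$ the chain condition together with $dx=1$ gives $df(a)=f(da)=\pi(da)\cdot 1$, so $f(a)=\pi(da)\cdot x$. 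Hence $f$ is completely determined.

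For existence I would take this same formula as the definition of $f$ and check it is a morphism. Unitality and continuity are immediate (the latter because $\Lambda(x)$ is finite dimensional and $f$ factors through the same finite-dimensional quotient as the continuous map $\pi$), and the chain condition is a one-line degree-by-degree check using $d^2=0$ and $dx=1$. The one delicate point, and the main obstacle, is multiplicativity $f(ab)=f(a)f(b)$: here I would use that $\pi$ is an algebra homomorphism vanishing on $M$ (so $\pi$ kills everything outside degree $0$), the Leibniz rule, and $x^2=0$. These force all products $f(a)f(b)$ to vanish except when $(\deg a,\deg b)\in\{(0,0),(0,1),(1,0)\}$, and in those three cases the identity reduces directly to multiplicativity of $\pi$ and the Leibniz rule. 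Everything outside this multiplicativity check is routine.
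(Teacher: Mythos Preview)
Your proposal is correct, but it takes a different route from the paper. The paper's argument is shorter and more structural: for a Hinich algebra $A$, any morphism $A\to\Lambda(x)$ must land in $\Lambda(x)^H=k$ by Lemma~\ref{morph}, and there is a unique map $A\to k$ since $k$ is terminal in the Hinich category; for an acyclic algebra one invokes Lemma~\ref{acyc} to write $A=A^0\otimes\Lambda(x)$ as a coproduct, so it suffices to show the only endomorphism of $\Lambda(x)$ is the identity, which follows because $x$ must go to an element of the maximal ideal with derivative $1$.

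Your first argument (feeding the empty diagram into Lemma~\ref{ifmap}) is a nice observation, but be slightly careful with your claim that the proofs of Lemmas~\ref{vani}--\ref{ifmap} survive verbatim. In the uniqueness step of Lemma~\ref{lamby} the paper deduces $\beta=0$ from the fact that $\beta$ has vanishing image in every object of $D^0$; for $D=\emptyset$ that condition is vacuous and does not by itself force $\beta=0$. What actually saves you is that $\beta$ must lie in degree~$1$ of $\varprojlim(D^0)=k$, which is zero. So the conclusion holds, but for a reason you did not quite name. Your second, direct argument is sound and entirely self-contained; the multiplicativity check goes through as you outline (the key point being that $\pi$ kills all nonzero degrees, so every case with $\deg a+\deg b\in\{0,1\}$ but $(\deg a,\deg b)\notin\{(0,0),(0,1),(1,0)\}$ collapses). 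The trade-off is that your direct route needs a small case analysis, whereas the paper's use of Lemmas~\ref{morph} and~\ref{acyc} avoids this and ties the result into the structural picture already built; conversely, your approach does not depend on those lemmas at all.
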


\begin{proof}
From any Hinich algebra there is a unique morphism to $\Lambda(x)$, factoring through $k \to \Lambda(x)$ (by Lemma \ref{morph}).  Any acyclic algebra is the coproduct of a Hinich algebra with $\Lambda(x)$, so it suffices to note that the identity is the unique morphism $\Lambda(x) \to \Lambda(x)$.  This follows from the fact that any such morphism must map $x$ to an element of the maximal ideal, whose derivative is $1$.
\end{proof}

\begin{definition}\label{cmc}
We define \emph{fibrations}, \emph{cofibrations} and \emph{weak equivalences}{\rm :}

\bigskip
\noindent
--A map $f\colon A \to B$ is a \emph{fibration} precisely when $B^H$ is contained in the image of $f$.

\bigskip
\noindent
--A \emph{cofibration} is a retraction of  a morphism in the class $\mathcal{C}$, consisting of the tensor product of cofibrations in the Hinich category with{\rm:}

\bigskip
\noindent i) The identity $1_k\colon k \to k$,

\noindent ii) The identity $1_{\Lambda(x)}\colon \Lambda(x) \to \Lambda(x)$,

\noindent iii) The natural inclusion $k \to \Lambda(x)$.

\bigskip
\noindent --A \emph{weak equivalence} is either a weak equivalence (in the Hinich category) between Hinich algebras, or any morphism between acyclic algebras.
\end{definition}

Note the fibrations between Hinich algebras are precisely the surjective maps, which are also the fibrations in the Hinich category.  (Indeed the fibrations between acyclic algebras are also precisely the surjective maps).  Further the weak equivalences and cofibrations between Hinich algebras are precisely the same as the ones in the Hinich category.  Thus this CMC structure extends the one given in \cite{Hini}.

Also note that every object in $\mathcal{E}$ is fibrant, as the image of any map to $\Lambda(x)$ must contain $k \subset \Lambda(x)$.

It is clear from Definition \ref{cmc} and the fact that the Hinich category is a CMC that fibrations, cofibrations and weak equivalences are closed under retraction, and each contain all identity maps.  Further, weak equivalences must satisfy the 2 of 3 rule and fibrations are closed under composition.  To verify that we have indeed defined a CMC, it remains to check that the various lifting properties and factorizations hold (from which it will follow that cofibrations are closed under composition).

\begin{lemma}  \label{acycacyc}Let $B_1, B_2$ be acyclic algebras.  Then any morphism $f\colon B_1^H \to B_2^H$ is a weak equivalence of Hinich algebras.
\end{lemma}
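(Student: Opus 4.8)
The plan is to reduce the statement, via the two-out-of-three rule for weak equivalences, to the single assertion that for \emph{any} acyclic algebra $B$ the unit map $u\colon k\to B^H$ is a weak equivalence of Hinich algebras. Indeed, writing $u_i\colon k\to B_i^H$ for the unit maps, the fact that $f$ is unit-preserving gives $f\circ u_1=u_2$; since $k$ and the $B_i^H$ all lie in the Hinich category, once $u_1$ and $u_2=f\circ u_1$ are known to be weak equivalences, two-out-of-three forces $f$ to be one as well. So everything comes down to the generic claim about $u\colon k\to B^H$.

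To analyse $u$ I would first make $B^H$ explicit. By Lemma~\ref{acyc} we may write $B=B^0\otimes\Lambda(x)$ with $dx=1$ and $B^0$ the (closed) Hinich subalgebra; writing $M^0$ for the maximal ideal of $B^0$, the defining condition $dw\in M$ identifies $B^H=B^0\oplus xM^0$ with differential $d(a+xb)=b$ for $a\in B^0$, $b\in M^0$. Its maximal ideal is then $M^H=M^0\oplus xM^0$, and a routine induction gives $(M^H)^n=(M^0)^n\oplus x(M^0)^n$ for every $n\ge 1$.

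Next I would invoke the fact that the weak equivalences of the Hinich category are the filtered quasi-isomorphisms, i.e.\ the maps inducing a quasi-isomorphism on the associated graded for the maximal-ideal-adic filtration \cite{Hini,Laza1}, and compute this associated graded for $B^H$. For $n\ge 1$ one obtains $\operatorname{gr}^n B^H\cong (M^0)^n/(M^0)^{n+1}\oplus x\,(M^0)^n/(M^0)^{n+1}$, on which the induced differential sends $x\bar b\mapsto\bar b$ and so restricts to an isomorphism between the two summands; hence $\operatorname{gr}^n B^H$ is acyclic for all $n\ge 1$, while $\operatorname{gr}^0 B^H=B^H/M^H=k$. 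Therefore $H(\operatorname{gr}B^H)=k$, concentrated in filtration degree $0$, and $u$ induces an isomorphism $\operatorname{gr}k\xrightarrow{\sim}H(\operatorname{gr}B^H)$. Thus $u$ is a filtered quasi-isomorphism, i.e.\ a weak equivalence, completing the reduction.

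The step I expect to require the most care is matching this associated-graded computation with the precise definition of weak equivalence in the Hinich model structure: one must check that the $M$-adic filtration on $B^H$ is complete and is the filtration governing filtered quasi-isomorphisms (equivalently, that $B^H$ is the dual of a conilpotent coalgebra whose coradical filtration corresponds to the $M$-adic one), so that the vanishing of $H(\operatorname{gr}^{\ge 1}B^H)$ genuinely detects a weak equivalence rather than merely an ordinary quasi-isomorphism. Granting this, the remainder is the short formal computation above together with the two-out-of-three reduction.
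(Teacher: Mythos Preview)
Your proposal is correct and follows essentially the same route as the paper: reduce via two-out-of-three to a single map between $k$ and $B^H$, identify $B^H$ as $B^0\oplus xM^0$ (the paper writes this as $k\oplus I[x]/x^2$ with $I=M^0$), and verify the filtered quasi-isomorphism using the filtration by powers of $M^0$, which coincides with your $M^H$-adic filtration since $(M^H)^n=(M^0)^n\oplus x(M^0)^n$. The only cosmetic difference is that the paper uses the augmentation $B^H\to k$ rather than the unit $k\to B^H$ in the two-out-of-three step, and it dispatches your final concern simply by declaring the filtration \emph{admissible} in Hinich's sense---so any such filtration, not specifically the coradical one, suffices to detect the weak equivalence.
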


\begin{proof}
By the 2 of 3 rule for weak equivalences, it suffices to check that maps $B^H \to k$ are weak equivalences in the Hinich category.  Let $I$ denote the maximal ideal in $B^0$.  From Lemma \ref{acyc} we know that $B^H = k\oplus I[x]/x^2$, with differential given by $d(a+xb)=b$ for $a,b \in I$.

Clearly the map  $k\oplus I[x]/x^2 \to k$ is a quasi--isomorphism.  Weak equivalences in the Hinich category are essentially filtered quasi-isomorphisms. Take the filtration on $I[x]/x^2$ by the powers of $I$. This is an admissible filtration, and the given map $k\oplus I[x]/x^2\to k$ respects it and induces a quasi-isomorphism on the associated grading.
\end{proof}

\begin{lemma} \label{llpcof}
Cofibrations have the left lifting property with respect to acyclic fibrations.
\end{lemma}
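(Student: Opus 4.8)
The plan is to exploit that the class of maps with the left lifting property against a fixed class is closed under retracts, so that it suffices to verify the lifting property for the generating maps in $\mathcal{C}$, i.e. for $c=i\otimes j$ with $i\colon A\to B$ a cofibration in the Hinich category and $j$ one of $1_k$, $1_{\Lambda(x)}$, $k\to\Lambda(x)$. First I would classify the acyclic fibrations $p\colon X\to Y$. Since by Definition \ref{cmc} the only weak equivalences are those between Hinich algebras and those between acyclic algebras, either (a) $X,Y$ are Hinich and $p$ is a surjective Hinich weak equivalence (an acyclic fibration in the Hinich CMC), or (b) $X,Y$ are both acyclic. The crucial preliminary observation is that in case (b) the restriction $p^H\colon X^H\to Y^H$ is an acyclic fibration of Hinich algebras: it is a weak equivalence by Lemma \ref{acycacyc}, and it is surjective because the fibration condition gives $Y^H\subseteq\operatorname{im}(p)$ while Lemma \ref{morph} forces the $p$-preimage of $Y^H$ to lie in $X^H$. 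Writing $X=X^0\otimes\Lambda(x_X)$ and $Y=Y^0\otimes\Lambda(x_Y)$ as in Lemma \ref{acyc}, the same containment $Y^0\subseteq Y^H\subseteq\operatorname{im}(p)$, together with the matching of $x_Y$-components (the $x_Y$-coefficient of $p(x_X)$ being a unit), shows moreover that $p^0\colon X^0\to Y^0$ is surjective.

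Next I would treat each generator. For $j=1_k$ (so $c=i$ is a Hinich cofibration between Hinich algebras): against a type-(a) fibration the lift is provided directly by the Hinich CMC; against a type-(b) fibration both legs of the square land in $X^H$ and $Y^H$ by Lemma \ref{morph}, producing a Hinich lifting square against $p^H$, solved by the previous step. For $j=1_{\Lambda(x)}$ (so $c=i\otimes1\colon A\otimes\Lambda(x)\to B\otimes\Lambda(x)$) the source is acyclic, forcing $X,Y$ acyclic and $p$ of type (b). Here a map $B\otimes\Lambda(x)\to X$ is determined by a Hinich map $B\to X^H$ together with the image $\xi\in X$ of the generator $x$, subject only to $d\xi=1$; commutativity of the square forces $\xi=u(1\otimes x)$, which then automatically satisfies $p(\xi)=v(1\otimes x)$, while the Hinich part is again obtained by lifting against $p^H$. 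Multiplicativity delivers the full lift.

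The remaining and most delicate case is $j\colon k\to\Lambda(x)$, where $c\colon A\to B\otimes\Lambda(x)$ has Hinich source and acyclic target, again forcing $p$ of type (b). As before I would first solve the induced Hinich lifting square $A\to X^H$, $B\to Y^H$ against $p^H$ to obtain the ``$B$-part'' $\ell\colon B\to X^H$ of the lift. It then remains to choose the image $\xi_X\in X$ of the generator $x$ so that simultaneously $d\xi_X=1$ and $p(\xi_X)=v(1\otimes x)=:\xi_Y$. Starting from $x_X$ (which has $dx_X=1$ since $X$ is acyclic), the element $p(x_X)-\xi_Y$ is closed, hence lies in $Y^0$, so by the surjectivity of $p^0$ established above it equals $p(\gamma)$ for some closed $\gamma\in X^0$; then $\xi_X:=x_X-\gamma$ meets both requirements. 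Setting $L|_B=\ell$ and $L(1\otimes x)=\xi_X$ yields the lift.

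The main obstacle is precisely this last construction: one must lift the odd generator $x$ to an element whose differential is still the unit \emph{and} whose image under $p$ is prescribed. These two constraints pull in different directions, and reconciling them rests on the non-formal input that the fibration condition for an acyclic-to-acyclic map forces $p^0\colon X^0\to Y^0$ to be surjective, so that the closed discrepancy $p(x_X)-\xi_Y$ can be absorbed by a \emph{closed} correction without disturbing $d\xi_X=1$. Verifying this surjectivity, and keeping the correction compatible with the already-constructed Hinich part, is where the real content lies; the other cases reduce cleanly to the lifting axiom of the Hinich CMC via the functors $(-)^H$ and $(-)^0$.
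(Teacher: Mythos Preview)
Your proof is correct and follows the same overall architecture as the paper's: reduce to the generating class $\mathcal{C}$ via retract-closure of the LLP, handle the Hinich part by passing to $(-)^H$ (invoking Lemma~\ref{acycacyc} in the acyclic--acyclic case), and give a special argument for the generator $x$.

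There are two tactical differences worth noting. First, the paper exploits that the LLP is closed under coproducts (tensor products), so once Hinich cofibrations are done, cases (i) and (ii) follow immediately, and case (iii) reduces to the single map $k\to\Lambda(x)$; you instead treat each generator $i\otimes j$ directly, which is longer but more self-contained. Second, for the crucial lifting of $x$ the two arguments are dual: the paper starts from a full $g$-preimage $z$ of the target (using that a fibration between acyclic algebras is automatically surjective, a fact the paper states but does not prove) and then corrects by $ym$ with $m\in\ker g$ closed and $dy=1$, so that $d(z-ym)=1$ while $g(z-ym)$ is unchanged. You start from $x_X$ (which already has $dx_X=1$) and correct by a closed $\gamma$ chosen via your explicit surjectivity of $p^0$, so the differential is preserved while the image is fixed. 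Both corrections work; yours makes the needed surjectivity statement ($p^0$ onto) explicit, whereas the paper leans on the unproven remark that such fibrations are surjective.
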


\begin{proof}
As the left lifting property with respect to a given map is closed under retraction, it suffices to verify it for each of the three classes (i), (ii), (iii).

Given a commutative square with a Hinich cofibration $f\colon A_1 \to A_2$ on the left and an acyclic fibration $g\colon B_1 \to B_2$ on the right, we may factorize the horizontal maps through the full Hinich subalgebras of $B_1, B_2$.  The restriction of $g$ to these is still a fibration in the Hinich category, and a weak equivalence (by Lemma \ref{acycacyc} in the  case $B_1, B_2$ are acyclic), so we obtain a lifting as follows:
\begin{eqnarray}\label{Hinifactor}
\xymatrix{
A_1\ar[d]_f\ar[r]&B_1^H\ar[d]\ar[r]&B_1\ar[d]^{g}\\
A_2\ar[r]\ar@{.>}[ur]&B_2^H\ar[r]&B_2
}
\end{eqnarray}
Composed with the horizontal map $B_1^H \to B_1$ this gives a lifting of the original square.

As the left lifting property is closed under taking coproducts, we have that the left lifting property for acyclic fibrations holds for maps in the classes (i) and (ii).  To show that it also holds for maps in the class (iii), it remains to show that it holds for the map $k\to \Lambda(x)$.

Let $g\colon B_1 \to B_2$ be an acyclic fibration and suppose we have a commutative square:
$$
\xymatrix{
k\ar[d]\ar[r]^{i_1}&B_1\ar[d]^{g}\\
\Lambda(x)\ar[r]^{i_2}&B_2
}
$$
Let $z\in B_1$ be a degree 1 element satisfying $g(z)=i_2(1\otimes x)$.   Then $dz=1+m$ for some degree 0 element $m\in {\rm ker}(g)$ with $dm=0$.

Let $y\in B_1$ be a degree 1 element satisfying $dy=1$.  Then we have that $d(z-ym)=1$.  Thus we may define a lifting $h\colon \Lambda(x) \to B_1$ by $h\colon x \mapsto z-ym$.
\end{proof}

\begin{lemma}
Acyclic cofibrations have the left lifting property with respect to fibrations.
\end{lemma}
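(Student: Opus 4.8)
The plan is to reduce to a small class of generating acyclic cofibrations and then to solve each lifting problem by passing to full Hinich subalgebras, where the lifting axioms of the Hinich CMC become available. Since the left lifting property with respect to a fixed map is closed under retracts, and every cofibration is by definition a retract of a map in $\mathcal{C}$, I would first determine which maps of $\mathcal{C}$ can underlie an acyclic cofibration. By Lemma \ref{morph} there are no morphisms from an acyclic algebra to a Hinich algebra, so a retract of a map between Hinich algebras is again between Hinich algebras, whereas a retract of a type (iii) generator $A_1\to A_2\otimes\Lambda(x)$ necessarily has Hinich source and acyclic target; by Definition \ref{cmc} such a map is never a weak equivalence. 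Hence an acyclic cofibration is a retract either of a type (i) generator, in which case (being a retract of a Hinich cofibration and a weak equivalence) it is a Hinich acyclic cofibration, or of a type (ii) generator $f\otimes 1_{\Lambda(x)}$ with $f$ a Hinich cofibration. It therefore suffices to establish the left lifting property with respect to fibrations for (I) Hinich acyclic cofibrations and (II) the type (ii) generators.

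For (I), given a square with a Hinich acyclic cofibration $\phi\colon X\to Y$ on the left and a fibration $g\colon B_1\to B_2$ on the right, I would factor the horizontal maps through $B_1^H$ and $B_2^H$, which is legitimate by Lemma \ref{morph} since $X$ and $Y$ are Hinich. The restriction $g^H\colon B_1^H\to B_2^H$ is surjective: for $y\in B_2^H$ pick $z\in B_1$ with $g(z)=y$ (possible as $g$ is a fibration); then $g(dz)=dy\in N_2$, and since the composite $B_1\to B_2\to B_2/N_2\cong k$ has kernel the maximal ideal $M_1$ of $B_1$, we get $dz\in M_1$, i.e. $z\in B_1^H$. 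Thus $g^H$ is a Hinich fibration, the lift $Y\to B_1^H$ exists because $\phi$ is a Hinich acyclic cofibration, and composing with $B_1^H\hookrightarrow B_1$ solves the original square.

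For (II), consider a square with $\psi=f\otimes 1_{\Lambda(x)}\colon A_1\otimes\Lambda(x)\to A_2\otimes\Lambda(x)$ on the left, a fibration $g\colon B_1\to B_2$ on the right, and top and bottom maps $a,b$. The sources are acyclic and map into $B_1,B_2$, so Lemma \ref{morph} forces $B_1$ and $B_2$ to be acyclic. The value of any lift on $x$ is forced to be $w=a(x)$; this is automatically compatible with $g$ since $g(a(x))=b(x)$ by commutativity of the square, and $dw=a(dx)=1$, so $x\mapsto w$ defines a legitimate partial map. What remains is a dg-algebra map $\alpha\colon A_2\to B_1$ with $\alpha\circ f=a|_{A_1}$ and $g\circ\alpha=b|_{A_2}$. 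As $A_1,A_2$ are Hinich, these restricted maps land in $B_1^H,B_2^H$ by Lemma \ref{morph}, so this is a lifting problem for the Hinich cofibration $f$ against $g^H\colon B_1^H\to B_2^H$, which is surjective exactly as in (I).

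The main obstacle, and the crucial point, is that here $f$ is only a cofibration, not an acyclic one, so an arbitrary Hinich fibration $g^H$ would not admit a lift. This is resolved by Lemma \ref{acycacyc}: because $B_1$ and $B_2$ are acyclic, every morphism $B_1^H\to B_2^H$ is a weak equivalence of Hinich algebras, so $g^H$ is in fact a Hinich acyclic fibration. The lift $\alpha$ then exists by the left lifting property of the cofibration $f$ against the acyclic fibration $g^H$, and assembling $\alpha$ with $x\mapsto w$ gives the desired map $A_2\otimes\Lambda(x)\to B_1$. Since the left lifting property is closed under retracts, the two generating cases (I) and (II) yield the property for all acyclic cofibrations.
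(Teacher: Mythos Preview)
Your argument is correct and uses the same case split as the paper (source and target both Hinich versus both acyclic), together with the same passage to full Hinich subalgebras via Lemma~\ref{morph}. The one genuine difference is in the acyclic case: the paper simply observes that once $A_1,A_2$ are acyclic, so are $B_1,B_2$, hence the fibration $g$ is automatically a weak equivalence and Lemma~\ref{llpcof} applies directly to produce the lift; you instead rebuild the lift by hand, splitting off the $\Lambda(x)$ factor and invoking Lemma~\ref{acycacyc} to make $g^H$ an acyclic Hinich fibration. Your route is a little longer but has the merit of making explicit why $g^H$ is surjective (a point the paper leaves implicit in both this lemma and Lemma~\ref{llpcof}), while the paper's route shows more clearly how the two lifting lemmas fit together.
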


\begin{proof}

Suppose we have a commutative square:
$$
\xymatrix{
A_1\ar[d]_f\ar[r]&B_1\ar[d]^{g}\\
A_2\ar[r]&B_2
}
$$
with $f$ an acyclic cofibration and $g$ a fibration.

If $A_1,A_2$ are Hinich algebras, then we may factorize the horizontal maps though full Hinich subalgebras and obtain a lifting as in (\ref{Hinifactor}).

On the other hand, if $A_1,A_2$ are acyclic algebras, then so are $B_1,B_2$.  Thus $g$ is an acyclic fibration and we have a lifting by Lemma \ref{llpcof}.
\end{proof}

\begin{lemma}
Let $f\colon A \to B$ be a morphism in $\mathcal{E}$.  Then $f$ may factorized as{\rm :}

\noindent i) an acyclic cofibration followed by a fibration,

\noindent ii) a cofibration followed by an acyclic fibration.
\end{lemma}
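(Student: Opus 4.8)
The plan is to split into cases according to whether the source $A$ and target $B$ are Hinich algebras or acyclic algebras, and in each case to reduce the construction to a factorization in the Hinich category (available since that category is a CMC) combined with the structural results of this section, principally Lemma \ref{morph}, Lemma \ref{acyc} and Lemma \ref{acycacyc}. By Lemma \ref{morph} there are no morphisms from an acyclic algebra to a Hinich algebra, so only three cases occur: both $A,B$ Hinich; both acyclic; or $A$ Hinich and $B$ acyclic. Throughout I will use that a fibration between acyclic (or between Hinich) algebras is exactly a surjection, that every morphism of acyclic algebras is a weak equivalence, that a cofibration with acyclic target has an acyclic intermediate object (being the source of a weak equivalence with acyclic target), and that an acyclic cofibration out of a Hinich algebra has a Hinich intermediate object.

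When $A$ and $B$ are both Hinich I would simply take the Hinich-category factorizations of $f$; since the cofibrations, (acyclic) fibrations and weak equivalences between Hinich algebras coincide with those of the Hinich CMC, both (i) and (ii) follow at once. When $A$ and $B$ are both acyclic the two required factorizations coincide: as every morphism of acyclic algebras is a weak equivalence, each amounts to writing $f$ as a cofibration followed by a surjection. Here I would first re-split the target. Writing $A=A^0\otimes\Lambda(x_A)$ and $B=B^0\otimes\Lambda(x_B)$ by Lemma \ref{acyc}, the element $y=f(x_A)$ has odd degree and satisfies $dy=f(1)=1$, hence squares to zero, so by Lemma \ref{acyc} we may use $y$ as the acyclic generator and write $B=B^0\otimes\Lambda(y)$. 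With this splitting $f=f^0\otimes 1$, where $f^0\colon A^0\to B^0$ is the induced map of closed subalgebras. Factoring $f^0$ in the Hinich category as $A^0\xrightarrow{j}C^0\xrightarrow{q}B^0$ with $j$ a cofibration and $q$ a surjective weak equivalence, and setting $C=C^0\otimes\Lambda(z)$, the maps $j\otimes 1$ and $q\otimes 1$ furnish the required cofibration (class $\mathcal{C}$, type (ii)) followed by a surjection of acyclic algebras.

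The remaining case, $A$ Hinich and $B$ acyclic, is where the real work lies. For (i) there is no difficulty: I would factor the Hinich morphism $f'\colon A\to B^H$ (which exists by Lemma \ref{morph}) in the Hinich category as an acyclic cofibration $A\to D$ followed by a surjection $D\to B^H$, then compose the latter with $B^H\hookrightarrow B$. Its image is exactly $B^H$, so it is a fibration, while $A\to D$ remains an acyclic cofibration in $\mathcal{E}$. The factorization (ii) is the main obstacle: now $C$ must be acyclic, the map $A\to C$ a cofibration out of a Hinich algebra, and $C\to B$ a surjection, yet $f$ itself only lands in $B^H\subsetneq B$. The naive attempts (for instance $a\mapsto g(a)\otimes 1$ into a tensor with $\Lambda(z)$) either fail to be chain maps or fail to be surjective, since the component of $f$ in the $x_B$-direction cannot be recovered from a map into a closed subalgebra.

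I would resolve (ii) as follows. Factor $f'\colon A\to B^H$ in the Hinich category as a cofibration $A\xrightarrow{g}D$ followed by a surjective weak equivalence $q'\colon D\to B^H$, with $D$ Hinich. Set $C=D\otimes\Lambda(z)$, which is acyclic; let $i=g\otimes\iota\colon A\to C$ be the class $\mathcal{C}$ cofibration of type (iii); and define $p\colon C\to B$ to be the algebra map with $p|_D$ equal to $q'$ followed by $B^H\hookrightarrow B$ and with $p(z)=x_B$. This is a well-defined chain map since $x_B^2=0$ and $dx_B=1$, and $pi=f$ by construction. The key point is surjectivity: the image of $p$ is $q'(D)+q'(D)x_B=B^H+B^Hx_B$, and since $B^H=B^0\oplus Ix_B$ (with $I$ the maximal ideal of $B^0$) one has $B^Hx_B=B^0x_B$, whence the image is all of $B=B^0\oplus B^0x_B$. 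As $C$ and $B$ are both acyclic, $p$ is automatically a weak equivalence, hence an acyclic fibration. The crux throughout is thus that multiplication by the acyclic generator $x_B$ converts the ``missing'' part of $B^H$ into the remaining direction of $B$, which is precisely what makes the required surjection available.
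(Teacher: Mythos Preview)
Your proof is correct and follows essentially the same case-by-case strategy as the paper: reduce to the Hinich CMC when both objects are Hinich, write $f=f^0\otimes 1_{\Lambda}$ and factor $f^0$ when both are acyclic, and in the mixed case factor $A\to B^H$ in the Hinich category and then tensor the intermediate object with $\Lambda$ to reach an acyclic algebra surjecting onto $B$. The only cosmetic difference is that for part (ii) in the Hinich--acyclic case the paper reuses the (acyclic cofibration, fibration) factorization already obtained in (i), whereas you perform a fresh (cofibration, acyclic fibration) factorization of $A\to B^H$; either choice works, and your explicit surjectivity computation $B^H+B^H x_B=B$ fills in a detail the paper leaves implicit.
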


\begin{proof}
In both cases it will suffice to take cofibrations from the class $\mathcal{C}$.

If $A,B$ are Hinich algebras, then both factorizations follow from the fact that the Hinich category is a CMC.  Note the cofibrations used here are of type (i).

Next suppose $A,B$ are both acyclic algebras.  By Lemma \ref{acyc}  we have $A=A^0\otimes \Lambda(x)$.  Letting $x$ also denote $f(x)$ we have that $f=f^0 \otimes 1_{\Lambda(x)}:A^0\otimes\Lambda(x)\to B^0 \otimes \Lambda(x)$, where $f^0\colon A^0 \to B^0$ is the restriction of $f$.

Now as $f^0$ is a map of Hinich algebras we may factorize it as $f^0=ji$ for an object $C$, a cofibration $i\colon A^0\to C$ and a fibration $j\colon C \to B^0$.  Then $i \otimes 1_{\Lambda(X)}$ is a cofibration of type (ii) and $j \otimes 1_{\Lambda(X)}$ is still surjective, hence a fibration.  Both are of course weak equivalences as they are maps between acyclic algebras.  Thus both factorizations of $f$ are given by:
$$
\xymatrix{A \ar[r]^{i \otimes 1_{\Lambda(X)}\quad\,\,}&C\otimes\Lambda(x) \ar[r]^{\qquad j \otimes 1_{\Lambda(X)}}& B}
$$

Finally, let $A$ be a Hinich algebra and $B$ an acyclic algebra.  We may factorize $f$ through the map $f^H\colon A \to B^H$, followed by the inclusion $\iota\colon B^H \to B$.  Then we may factorize $f^H=ji$, for an object $C$, an acyclic cofibration $i\colon A \to C$, and a fibration $j\colon C \to B^H$.  Thus $f$ factorises as:
$$
\xymatrix{A \ar[r]^{i}&C \ar[r]^{j}& B^H\ar[r]^\iota&B}
$$
As $j$ is a fibration between Hinich algebras it is surjective, and $\iota j$ is a fibration.  Thus $f=(\iota j) i$ gives us the first factorization (i).

From Lemma \ref{acyc} we know that we have some morphism $g\colon \Lambda(x) \to B$.   Together with $\iota j$ this induces a morphism $h\colon C\otimes \Lambda(x) \to B$.  This map is a surjective map of acyclic algebras, hence an acyclic  fibration.  Let $c\colon C \to C\otimes \Lambda(x)$ denote the natural inclusion.  We have that $f$ factorizes as:
$$
\xymatrix{A \ar[r]^{i}&C \ar[r]^{c\qquad}&C\otimes \Lambda(x)\ar[r]^{\qquad h}& B}
$$
Now $ci$ is a cofibration of type (iii).  Thus $f=h(ci)$ gives us the second factorization (ii).

\end{proof}

\begin{lemma}
Cofibrations are closed under composition.
\end{lemma}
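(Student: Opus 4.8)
The plan is to reduce the statement to the standard retract argument, thereby characterizing cofibrations by a lifting property which is then automatically closed under composition.

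First I would establish that a morphism $f\colon A\to B$ in $\mathcal{E}$ is a cofibration \emph{if and only if} it has the left lifting property with respect to every acyclic fibration. One implication is precisely Lemma~\ref{llpcof}. For the converse, suppose $f$ has this lifting property. Using the factorization lemma above, write $f=pi$ with $i\colon A\to C$ a cofibration and $p\colon C\to B$ an acyclic fibration. Applying the lifting property of $f$ against the acyclic fibration $p$ to the square with top edge $i$ and bottom edge $1_B$ (which commutes since $pi=f$) yields a map $h\colon B\to C$ with $hf=i$ and $ph=1_B$. This exhibits $f$ as a retract of $i$:
$$
\xymatrix{
A\ar[r]^{1_A}\ar[d]_f & A\ar[d]_i\ar[r]^{1_A} & A\ar[d]^f\\
B\ar[r]_h & C\ar[r]_p & B
}
$$
Since cofibrations are by definition retracts of maps in $\mathcal{C}$, and a retract of a retract is again a retract, $f$ is itself a cofibration.

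With this characterization established, closure under composition is purely formal. Given cofibrations $f\colon A\to B$ and $g\colon B\to C$ and any acyclic fibration $q\colon X\to Y$, I would take an arbitrary commutative square from $gf$ to $q$ and first solve the lifting problem for $f$ against $q$, using the composite $B\to C\to Y$ as the bottom edge; this produces a diagonal $w\colon B\to X$. Feeding $w$ into the corresponding square for $g$ against $q$ and solving that lifting problem yields a diagonal $s\colon C\to X$, and one checks immediately that $s$ solves the original lifting problem for $gf$. Hence $gf$ has the left lifting property with respect to every acyclic fibration and is therefore a cofibration.

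The substantive input is entirely contained in the two ingredients of the retract argument, namely the factorization of an arbitrary morphism as a cofibration followed by an acyclic fibration and the lifting property of Lemma~\ref{llpcof}; both have already been proved for $\mathcal{E}$. Once the ``if and only if'' characterization is in place, no computation specific to $\mathcal{E}$ remains, and closure under composition is the general fact that any class defined by a left lifting property is stable under composition. I expect the only genuine care to be needed in verifying that the squares to which the lifting properties are applied actually commute, which is routine diagram chasing.
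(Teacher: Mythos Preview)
Your proposal is correct and is essentially the same as the paper's argument. The paper applies the retract argument directly to a composition of cofibrations (noting first that such a composition inherits the left lifting property from Lemma~\ref{llpcof}, then factoring it and exhibiting it as a retract of a map in $\mathcal{C}$), whereas you isolate the retract argument as an ``if and only if'' characterization before invoking closure of lifting properties under composition; the substantive content---factorization plus Lemma~\ref{llpcof} yielding a retract diagram---is identical.
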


\begin{proof}
Note that compositions of cofibrations have the left lifting property for acyclic fibrations by Lemma \ref{llpcof}.  Let $f\colon A \to B$ be such a composition, and let $f=ji$ be its factorization into a morphism $i \in \mathcal{C}$ followed by an acyclic fibration $j\colon C \to B$.  We may pick a morphism $h$ to make the following diagram commute:
$$
\xymatrix{
A\ar[d]_f\ar[r]^i&C\ar[d]^j\\
B\ar[r]_{1_B}\ar@{.>}[ur]_h&B&
} $$

Then $f$ is a retract of the morphism $i\in\mathcal{C}$:
$$
\xymatrix{
A\ar[d]_f\ar[r]^{1_A}&A\ar[d]^i\ar[r]^{1_A}&A\ar[d]^f\\
B\ar[r]_h&C\ar[r]_j&B
} $$
\end{proof}

Thus $\mathcal{E}$ contains limits and colimits.  We conclude:

\begin{theorem} \label{section3mainresult}
The category $\mathcal{E}$  is a CMC, with fibrations, cofibrations and weak equivalences as in Definition \ref{cmc}.
\end{theorem}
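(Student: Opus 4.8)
The plan is to verify the Quillen closed model category axioms (in the formulation of \cite[\S3]{Dwye}) one at a time, since by this point essentially every ingredient has already been assembled in the preceding lemmas; the proof of the theorem is therefore largely an exercise in bookkeeping. I would organise it around the five axioms: existence of all small limits and colimits, the two-out-of-three property for weak equivalences, closure of the three distinguished classes under retracts, the two lifting properties, and the two factorizations.

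For the limits and colimits axiom I would cite Lemma \ref{colim} for all small colimits, and Lemmas \ref{nomap} and \ref{ifmap} for all small limits, the two cases being distinguished according to whether a cone $\Lambda(x)\to D$ exists. The two-out-of-three property, the closure under retracts, and the fact that each class contains the identities were already recorded in the discussion following Definition \ref{cmc}, and follow immediately from the corresponding facts in the Hinich CMC together with the definition of weak equivalences in $\mathcal{E}$. The two lifting axioms are precisely Lemma \ref{llpcof} (cofibrations against acyclic fibrations) and the lemma immediately after it (acyclic cofibrations against fibrations), while the two required factorizations are supplied by the factorization lemma proved just above. Finally, since a CMC demands that cofibrations be closed under composition, I would invoke the last lemma, whose retract argument deduces this closure from the lifting property already established.

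The genuinely delicate points — and where I would expect any real obstacle to lie — are not in the assembly but inside the case analyses that those lemmas already dispatch: namely the lifting of $k\to\Lambda(x)$ against an acyclic fibration, handled in Lemma \ref{llpcof} by the explicit correction $x\mapsto z-ym$ that repairs the failure of $dz$ to equal $1$, and the factorizations in the mixed case where the source is a Hinich algebra and the target an acyclic algebra, where one must route through the full Hinich subalgebra $B^H$ and then reintroduce the $\Lambda(x)$ factor. Once these lemmas are in hand, the proof of Theorem \ref{section3mainresult} reduces to confirming that the list of verified properties is exactly the list of CMC axioms, and I would present it simply as that collation.
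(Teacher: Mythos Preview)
Your proposal is correct and matches the paper's approach exactly: the paper's proof of Theorem~\ref{section3mainresult} is nothing more than the one-line observation that $\mathcal{E}$ has all limits and colimits and that the preceding lemmas have verified the remaining CMC axioms. Your collation of Lemma~\ref{colim}, Lemmas~\ref{nomap} and~\ref{ifmap}, the remarks after Definition~\ref{cmc}, Lemma~\ref{llpcof}, the subsequent lifting and factorization lemmas, and the closure-under-composition lemma is precisely the intended assembly.
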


The demand that there are no weak equivalences between a Hinich algebra and an acyclic algebra was quite natural to make, as such maps are never quasi--isomorphisms.  Similarly it was natural to demand that all maps between acyclic algebras be weak equivalences.

Once the weak equivalences are fixed, the extension of the CMC structure on Hinich algebras to a CMC structure on $\mathcal{E}$ is completely determined by two conditions.  The first is that surjective maps are fibrations (as they were in the Hinich algebra).  The second is that the terminal object is cofibrant.

\begin{proposition}
The CMC structure on $\mathcal{E}$ given in Definition \ref{cmc}, is the unique extension of the one on Hinich algebras given in \cite{Hini} with{\rm:}

\bigskip
\noindent i) weak equivalences precisely those given in \ref{cmc},

\bigskip
\noindent ii)  all surjective maps being fibrations,

\bigskip
\noindent iii) $\Lambda(x)$ cofibrant.
\end{proposition}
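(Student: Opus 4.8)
The plan is to prove uniqueness; existence is immediate, since the structure of Definition \ref{cmc} visibly satisfies (i)--(iii) (every surjection $f\colon A\to B$ has $B^H\subseteq\operatorname{im}f=B$, and $k\to\Lambda(x)$ is a generating cofibration of class (iii)). So let $(\mathrm{cof}',\mathrm{fib}',W)$ be any closed model structure on $\mathcal{E}$ whose restriction to the Hinich subcategory is the structure of \cite{Hini}, whose weak equivalences $W$ are those of Definition \ref{cmc}, and which satisfies (ii) and (iii). I would invoke the standard fact that a closed model structure is determined by its weak equivalences together with its fibrations, since a map is a cofibration exactly when it has the left lifting property against all acyclic fibrations and an acyclic fibration is a fibration lying in $W$. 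Hence it suffices to show that $\mathrm{fib}'$ coincides with the class $\mathrm{fib}$ of Definition \ref{cmc}.

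First I would prove $\mathrm{fib}'\subseteq\mathrm{fib}$. Each generator of the class $\mathcal{C}$ lies in $\mathrm{cof}'$: a Hinich cofibration tensored with $1_k$ is again a Hinich cofibration, hence in $\mathrm{cof}'$ because the structure restricts to \cite{Hini}; tensoring a Hinich cofibration $f\colon A\to B$ with $1_{\Lambda(x)}$ yields the pushout of $f$ along $A\to A\otimes\Lambda(x)$, and tensoring with $k\to\Lambda(x)$ yields $f$ followed by the pushout of $k\to\Lambda(x)$ along $k\to B$, where $k\to\Lambda(x)\in\mathrm{cof}'$ by (iii); all of these are in $\mathrm{cof}'$ by closure of cofibrations under pushout and composition. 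As $\mathrm{cof}'$ is closed under retracts, $\mathrm{cof}\subseteq\mathrm{cof}'$. Intersecting with the fixed class $W$ gives $\mathrm{cof}\cap W\subseteq\mathrm{cof}'\cap W$, and since fibrations are precisely the maps with the right lifting property against acyclic cofibrations, this dually yields $\mathrm{fib}'\subseteq\mathrm{fib}$.

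For the reverse inclusion I would first describe $\mathrm{fib}$ explicitly. A non-surjective member $f\colon A\to B$ of $\mathrm{fib}$ forces $B$ acyclic (else $B^H=B$) with $\operatorname{im}f=B^H$ (a codimension-one subalgebra), and forces $A$ Hinich, since an acyclic source would place an element of derivative $1$ in the image, contradicting $\operatorname{im}f\subseteq B^H$. Thus $\mathrm{fib}$ consists of the surjections together with these maps $f\colon A\to B$ with $A$ Hinich, $B$ acyclic and $\operatorname{im}f=B^H$. Surjections lie in $\mathrm{fib}'$ by (ii). For such an $f$ I would factor $f=\iota\circ f^H$ with $f^H\colon A\twoheadrightarrow B^H$ a surjection of Hinich algebras (in $\mathrm{fib}'$ by (ii)) and $\iota\colon B^H\hookrightarrow B$ the inclusion, and observe that, writing $B\cong B^0\otimes\Lambda(x)$ by Lemma \ref{acyc}, the map $\iota$ is precisely the pullback of $k\to\Lambda(x)$ along the augmentation $B\cong B^0\otimes\Lambda(x)\twoheadrightarrow k\otimes\Lambda(x)=\Lambda(x)$. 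Granting $k\to\Lambda(x)\in\mathrm{fib}'$, closure of fibrations under base change gives $\iota\in\mathrm{fib}'$ and closure under composition gives $f\in\mathrm{fib}'$; hence $\mathrm{fib}\subseteq\mathrm{fib}'$ and finally $\mathrm{fib}=\mathrm{fib}'$, which as noted pins down the whole structure.

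The hard part is the residual claim that $k\to\Lambda(x)\in\mathrm{fib}'$, and the key point is that this follows from the weak equivalences alone. I would check the right lifting property of $k\to\Lambda(x)$ against an arbitrary $\mathrm{cof}'$-acyclic cofibration $j\colon X\to Y$ by hand: in any lifting square the top edge is a morphism $X\to k$, so by Lemma \ref{morph} (no acyclic algebra maps to a Hinich algebra) $X$ must be Hinich, whence $Y$ is Hinich as well, since $j\in W$ can only connect algebras of the same type; the augmentation $Y\to k$ is then the required lift, and it is unique because $k$ is terminal in the Hinich category. Recognizing $\iota$ as a base change of $k\to\Lambda(x)$, and establishing that $k\to\Lambda(x)$ remains a fibration via the terminality of $k$ in the Hinich category, is the crux on which the whole argument turns.
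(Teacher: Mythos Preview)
Your proof is correct and follows essentially the same approach as the paper's. Both arguments hinge on the same two observations: that $k\to\Lambda(x)$ is forced to be a fibration by a direct RLP check (using that weak equivalences never cross between Hinich and acyclic types, and that $k$ is terminal among Hinich algebras), and that the generating cofibrations of $\mathcal C$ are forced to be cofibrations because they arise from Hinich cofibrations via coproduct/pushout with $1_{\Lambda(x)}$ or with $k\to\Lambda(x)$. Your identification of $\iota\colon B^H\hookrightarrow B$ as a base change of $k\to\Lambda(x)$ is exactly the paper's remark that $A^H\to A$ is the product of $1_A$ with $\gamma$ (these coincide since $\Lambda(x)$ is terminal); you are simply more explicit than the paper in spelling out the factorisation $f=\iota\circ f^H$ of a non-surjective fibration and in deducing $\mathrm{fib}'\subseteq\mathrm{fib}$ from $\mathrm{cof}\subseteq\mathrm{cof}'$ rather than invoking the ``no extra fibrations or cofibrations'' principle directly.
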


\begin{proof}
Consider any such CMC structure.  The map $\gamma\colon k \to \Lambda(x)$ has the right lifting property for all maps between Hinich algebras, and all maps between acyclic algebras.  Thus it must be a fibration.  For any algebra $A$, the inclusion $A^H \to A$ is the product of the identity $1_A$ with $\gamma$ and hence a fibration.  Thus all the fibrations of Definition \ref{cmc} are still fibrations in our new CMC structure.

Clearly $1_{\Lambda(x)}$ is a cofibration and as $\Lambda(x)$ is cofibrant we have $\gamma$ also a cofibration. Thus for any cofibration between Hinich algebras $f\colon A \to B$, the coproduct of $f$ with $1_{\Lambda(x)}$ or $\gamma$ must also be a cofibration.   Thus all the cofibrations of Definition \ref{cmc} are still cofibrations in our new CMC structure.

Our new CMC structure has the same weak equivalences, and all the fibrations and cofibrations of Definition \ref{cmc}.  Thus it cannot have any additional fibrations or cofibrations and must in fact be the same structure.
\end{proof}

Finally we note that the CMC structure on $\mathcal{E}$ is related to the CMC structures on Hinich algebras via the following Quillen adjunction:

\bigskip
-- Let $E$ be the inclusion functor of the category of Hinich algebras in $\mathcal{E}$.

\bigskip
-- Let $H$ be the functor sending an algebra $A\in \mathcal{E}$ to $A^H$, and sending a morphism $f\colon A \to B$ in $\mathcal{E}$ to the restriction $f^H\colon A^H \to B^H$.

\bigskip
We have a natural equivalence $\epsilon \colon EH\to 1_E$, where for any $A\in \mathcal{E}$, the map $\epsilon_A\colon A^H \to A$ is just the natural inclusion.   We have a natural isomorphism $\eta$ from $HE$ to the identity functor on Hinich algebras, which is simply the identity on each Hinich algebra.  Thus $E$ is left adjoint to $H$: $E \dashv H$.

\begin{proposition} \label{quilladjEH}
The pair $E,H$ form a Quillen adjunction.
\end{proposition}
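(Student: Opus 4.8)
The plan is to invoke the standard criterion for a Quillen adjunction: since the adjunction $E\dashv H$ has already been constructed, it remains only to check that the left adjoint $E$ sends cofibrations to cofibrations and acyclic cofibrations to acyclic cofibrations. (Equivalently one may verify that the right adjoint $H$ preserves fibrations and acyclic fibrations; I would present the left-adjoint check as the proof proper and include the $H$-side as an instructive cross-check, since it exercises Lemmas \ref{morph} and \ref{acycacyc}.)

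For the left adjoint, I would read off the answer from Definition \ref{cmc}. The generating class $\mathcal{C}$ of cofibrations of $\mathcal{E}$ contains, via clause (i), every tensor product of a Hinich cofibration with $1_k\colon k\to k$; since $A\otimes k\cong A$ naturally, a Hinich cofibration $f\colon A\to B$ \emph{is} such a morphism $f\otimes 1_k$, and therefore $E(f)=f$ is a cofibration in $\mathcal{E}$. If $f$ is moreover a weak equivalence in the Hinich category, then it is a weak equivalence between Hinich algebras and hence a weak equivalence in $\mathcal{E}$ by Definition \ref{cmc}, so $E(f)$ is an acyclic cofibration. This already proves the proposition, and simply records the compatibility, noted after Definition \ref{cmc}, of the two model structures on Hinich algebras.

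For the dual route through $H$, the one genuinely nontrivial point is the surjectivity of $f^H$. Suppose $f\colon A\to B$ is a fibration of $\mathcal{E}$, so that $B^H\subseteq\operatorname{im}(f)$; given $b\in B^H$ I would choose $a\in A$ with $f(a)=b$, and then, since $f(a)\in B^H$, the contrapositive of the converse clause of Lemma \ref{morph} forces $a\in A^H$, so that $f^H(a)=b$ and $f^H$ is a surjection, i.e. a Hinich fibration. If in addition $f$ is a weak equivalence, then $A$ and $B$ must be of the same type---both Hinich or both acyclic, there being no weak equivalences across types---and in the Hinich case $f^H=f$ is already a Hinich weak equivalence, while in the acyclic case $f^H$ is one by Lemma \ref{acycacyc}; hence $H$ preserves acyclic fibrations. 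The main obstacle is precisely this surjectivity statement, which turns entirely on Lemma \ref{morph} doing double duty---keeping $A^H$ mapped into $B^H$ while forbidding anything outside $A^H$ from landing in $B^H$; everything else is bookkeeping against the definitions.
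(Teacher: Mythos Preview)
Your proposal is correct and follows essentially the same approach as the paper: the paper's proof is the single sentence ``It suffices to note that $E$ preserves cofibrations and weak equivalences (as well as fibrations),'' and your left-adjoint argument unpacks exactly this observation via clause (i) of Definition \ref{cmc}. Your additional $H$-side verification through Lemmas \ref{morph} and \ref{acycacyc} is a correct and pleasant cross-check, but goes beyond what the paper records.
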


\begin{proof}
It suffices to note that $E$ preserves cofibrations and weak equivalences (as well as fibrations).
\end{proof}

\section{Formal categories of products}\label{prodsec}

In this section we will show that if a category $\mathcal{C}$ is a CMC, then so is the category of formal products of objects in $\mathcal{C}$.

\begin{definition} Let {\rm Prod}$(\mathcal{C})$ denote the category whose objects are maps from an indexing set to the objects of $\mathcal{C}$.  We write $\prod_{i\in I} A_i$ for the object corresponding to the  map sending $i \mapsto A_i$.

A morphism in {\rm Prod}$(\mathcal{C})${\rm ;} $$f\colon \prod_{i\in I} A_i \to \prod_{j\in J} B_j,$$ is a map $J \to I$, sending $j \to i_j$, together with a map sending each $j \in J$ to a morphism $f_j\colon A_{i_j} \to B_j$.  We call the $f_j$ the components of $f$.

The composition of $f$ with a map $g\colon \prod_{j\in J} B_j\to \prod_{k \in K} C_k$ is given by{\rm:}
$$
(gf)_k=g_kf_{j_k}
$$
The identity map on $\prod_{i \in I} A_i$ has components $1_{A_i}$ for $i \in I$.
\end{definition}

Note that the indexing set for an object may be empty.  We denote this object $0$.  Observe that it is the terminal object in our category;  given any other object, there is a unique morphism from it to $0$, which will have no components.  The only morphism from $0$ is the idenity map to itself.

We can also define the category of formal coproducts in $\mathcal{C}$ by \[\left(\operatorname{coProd} \mathcal{C}\right)^{\rm op}:=\left(\operatorname{Prod}{\mathcal C}^{\rm op}\right)\]
where the superscript $\rm op$ stands to indicate the opposite category. Clearly, all our results will also be true for the categories of formal coproducts, in particular $\operatorname{coProd} \mathcal{C}$ will be a CMC whenever $\mathcal{C}$ is a CMC.
\begin{remark}
To get some intuition for the category $\operatorname{Prod}\mathcal C$ or  $\operatorname{coProd}\mathcal C$ note that if $\mathcal C$ is the category of connected topological spaces then $\operatorname{coProd}\mathcal C$ is the category of all topological spaces. The categories of formal (co)products (more precisely, their variants consisting of \emph{finite} (co)products) were used in \cite{Laza} to construct disconnected rational homotopy theory.
\end{remark}
Note that $\operatorname{ Prod}$ of the category of local $k$--algebras is not the full category of $k$--algebras (note that a morphism $k^I \to k$ need not correspond to the inclusion of a point in the discrete set $I$, but rather the inclusion of a point in $\beta I$, the Stone--\u{C}ech compactification of $I$).

However we need not concern ourselves with this technicality for coalgebras, as from Theorem \ref{pseudo} we have that:

\begin{lemma}\label{coalg} The opposite category to  counital cocommutative dg coalgebras $\mathcal{V}^{\rm op}$ is precisely ${\rm  Prod}(\mathcal{E})$.
\end{lemma}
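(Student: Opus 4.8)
The plan is to exhibit a mutually inverse pair of functors, or rather to verify directly that the data defining a morphism in $\operatorname{Prod}(\mathcal{E})$ matches the data defining a morphism of pseudo--compact dg algebras established in Theorem \ref{pseudo}. Recall that $\mathcal{V}^{\rm op}$ is identified with the category of pseudo--compact dg algebras, and $\mathcal{E}$ is the full subcategory of local pseudo--compact dg algebras, whose objects are precisely the Hinich algebras and acyclic algebras. So I must produce an equivalence (in fact, I expect an isomorphism) of categories between pseudo--compact dg algebras and $\operatorname{Prod}(\mathcal{E})$.

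First I would define the functor on objects. By Theorem \ref{pseudo}, every pseudo--compact dg algebra $A$ decomposes as a direct product $A \cong \prod_{M \in \Omega} A_M$ of Hinich algebras and acyclic algebras, indexed by the set $\Omega$ of graded maximal ideals containing some $I_i$. I send $A$ to the formal product $\prod_{M \in \Omega} A_M \in \operatorname{Prod}(\mathcal{E})$, using the same indexing set $\Omega$ and the assignment $M \mapsto A_M$. Conversely, a formal product $\prod_{i \in I} A_i$ of objects of $\mathcal{E}$ is sent to the honest direct product of pseudo--compact dg algebras, which is again pseudo--compact (an inverse limit of the finite-dimensional pieces of each factor). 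The one point requiring care here is that the decomposition in Theorem \ref{pseudo} is canonical: the indexing set $\Omega$ and the factors $A_M$ are intrinsically determined by $A$ (as its set of graded maximal ideals and the associated local factors), so the object assignment is well defined up to the obvious reindexing.

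Next I would match the morphisms, and this is where the content lies. A morphism in $\operatorname{Prod}(\mathcal{E})$ from $\prod_{i \in I} A_i$ to $\prod_{j \in J} B_j$ consists, by definition, of a set map $J \to I$, $j \mapsto i_j$, together with morphisms $f_j \colon A_{i_j} \to B_j$ in $\mathcal{E}$. This is exactly the data supplied by the second assertion of Theorem \ref{pseudo}: a morphism $f$ of pseudo--compact dg algebras $\prod_{i \in I} A_i \to \prod_{j \in J} B_j$ between products of Hinich and acyclic algebras corresponds precisely to a collection of continuous homomorphisms $f_j \colon A_{i_j} \to B_j$, one for each $j \in J$. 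I would check that this correspondence is bijective and respects composition, where composition in $\operatorname{Prod}(\mathcal{E})$ was defined by $(gf)_k = g_k f_{j_k}$; this matches composition of the corresponding algebra morphisms on each factor.

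The main obstacle, and the step deserving the most attention, is verifying that \emph{every} morphism of pseudo--compact dg algebras between such products really is of this componentwise form, i.e. that Theorem \ref{pseudo} captures all morphisms and not merely those that manifestly respect the factorization. This is precisely the content already secured in the run-up to Theorem \ref{pseudo}: the argument that each composite $f_j$ factors through an ultraproduct over an ultrafilter $\mu_j$, together with the lemma showing $\mu_j$ is principal (using continuity of $f$ and the $T_1$ property), forces $f_j$ to factor through a single factor $A_{i_j}$. Since I may invoke Theorem \ref{pseudo} directly, the remaining work is purely formal bookkeeping: confirming functoriality of both directions, that the two composites are naturally isomorphic to the respective identity functors, and that $0 \in \operatorname{Prod}(\mathcal{E})$ (the empty product) corresponds to the terminal pseudo--compact dg algebra, consistent with the identification. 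I therefore expect the proof to be short, citing Theorem \ref{pseudo} for the substance and checking compatibility of composition and identities.
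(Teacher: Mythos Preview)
Your proposal is correct and follows exactly the paper's approach: the paper offers no separate proof of this lemma at all, simply stating it as an immediate consequence of Theorem~\ref{pseudo}. Your write-up merely spells out the formal bookkeeping (matching objects via the decomposition and morphisms via the componentwise description) that the paper leaves implicit.
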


Returning to the general case $\mathcal{C}$, we must first show that {\rm Prod}$(\mathcal{C})$ contains all limits and colimits.  Let $D\colon {\mathcal{A}}\to {\rm Prod}(\mathcal{C})$ be a diagram.  Let $\bar{D}\colon \mathcal{A}^{\rm op} \to {\rm SET}$ denote the underlying diagram of indexing sets.  Let $L,C$ denote the limit and colimit of $\bar{D}$ respectively.  We have $$C=\coprod_{A \in \mathcal{A}}\bar{D}(A)/\sim,$$ where the relation is generated by $i \sim \bar{D}(f)(i)$ for $f$ a morphism in $\mathcal{A}$.

Thus each $c \in C$ may be regarded as a category, whose objects are the elements of the equivalence class $c$ and whose morphisms are arrows $\bar{D}(f)(i)\to i$, for morphisms $f$ in $\mathcal{A}$.  We have a diagram $D_c\colon c \to \mathcal{C}$ sending each object and arrow in $c$ to the corresponding object and morphism in $\mathcal{C}$.  Let $L_c=\varprojlim{D_c}$ and let $$L_D=\prod_{c\in C} L_c.$$

\begin{lemma}\label{prodlim}
We have $\varprojlim{D}=L_D$.
\end{lemma}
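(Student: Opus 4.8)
The plan is to exhibit $L_D=\prod_{c\in C}L_c$ as the universal cone over $D$, verifying the universal property by hand while keeping careful track of the two levels at which a morphism of formal products lives: the underlying map of indexing sets, and the family of component morphisms in $\mathcal{C}$.

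First I would construct the candidate projections. For each $A\in\mathcal{A}$ let $\pi_A\colon L_D\to D(A)$ be the morphism in ${\rm Prod}(\mathcal{C})$ whose underlying map of indexing sets is the canonical map $\bar D(A)\to C$, $i\mapsto [i]$, to the colimit, and whose component at $i\in\bar D(A)$ is the limit projection $L_{[i]}=\varprojlim D_{[i]}\to D_{[i]}(i)=D(A)_i$. To see that the $\pi_A$ form a cone, fix $f\colon A\to A'$ in $\mathcal{A}$ and compare $D(f)\circ\pi_A$ with $\pi_{A'}$ using the composition law $(gf)_k=g_kf_{j_k}$. On indexing sets the composite sends $j\in\bar D(A')$ to $[\bar D(f)(j)]$, which equals $[j]$ since $\bar D(f)(j)\sim j$ in $C$; on the component at $j$ it equals $D(f)_j\circ\big(L_{[j]}\to D(A)_{\bar D(f)(j)}\big)$, which is exactly the projection $L_{[j]}\to D(A')_j$ because the arrow $\bar D(f)(j)\to j$ in the category $[j]$ is sent by $D_{[j]}$ to $D(f)_j$ and the limit projections respect it. Hence $D(f)\circ\pi_A=\pi_{A'}$.

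Next I would check universality. Let $(T,(\tau_A))$ be any cone, with $T=\prod_{t\in\mathcal{T}}T_t$. Writing each $\tau_A$ as an index map $\bar D(A)\to\mathcal{T}$ together with components $(\tau_A)_i$, the cone identity $D(f)\circ\tau_A=\tau_{A'}$ splits into two statements. At the level of indexing sets it says the maps $\bar D(A)\to\mathcal{T}$ are compatible with the $\bar D(f)$, and therefore, by the universal property of the colimit $C$, factor uniquely through a single map $\phi\colon C\to\mathcal{T}$. At the level of components it says that for each $c\in C$ the family $\{(\tau_A)_i\colon T_{\phi(c)}\to D_c(i)\}_{i\in c}$ satisfies the cone identities for the diagram $D_c$, and therefore, by the universal property of $L_c$, factors uniquely through a morphism $u_c\colon T_{\phi(c)}\to L_c$. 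The pair $(\phi,(u_c))$ assembles into a morphism $u\colon T\to L_D$ with $\pi_A\circ u=\tau_A$ for all $A$, and uniqueness of $u$ follows from uniqueness of $\phi$ at the index level and of each $u_c$ at the component level.

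I expect the only real subtlety to be keeping the variances straight: a morphism of formal products reverses the indexing map, so a \emph{limit} in ${\rm Prod}(\mathcal{C})$ is computed by gluing a \emph{colimit} $C$ of indexing sets to honest \emph{limits} $L_c$ in $\mathcal{C}$. Once the two universal properties are matched against the two halves of the cone identity --- the colimit of sets governing the index maps, the limits in $\mathcal{C}$ governing the components --- the verification is routine. No hypothesis beyond the existence of the (small) limits $L_c$ is used, and these exist since $\mathcal{C}$, being a CMC, is complete.
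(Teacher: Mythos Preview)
Your proof is correct and follows essentially the same approach as the paper: construct the canonical cone $L_D\to D$ using the limit projections $L_{[i]}\to D(A)_i$, then verify universality by splitting the cone condition into its index-set part (handled by the colimit $C$ in \textbf{SET}) and its component part (handled by the limits $L_c$ in $\mathcal{C}$). You are in fact more thorough than the paper, which asserts without justification that the $\pi_A$ form a cone and that a cone $g$ maps all elements of a class $c$ to a single index $j$; you derive both from the composition law and the universal property of $C$.
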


\begin{proof}
Given some $A\in\mathcal{A}$, let $$D(A)=\prod_{i\in I}A_i.$$  Each $i\in I$ is an element of a unique $c \in C$, and we have a map ${f^A}_i\colon L_c \to A_i$.  Thus we have a morphism $$f^A\colon L_D \to D(A),$$ for each $A\in \mathcal{A}$.  This gives us a cone $f\colon L_D \to D$.

Now consider a cone $g\colon X=\prod_{j \in J}X_j \to D$.   For each $c \in C$ we have that $g$ maps all the elements of $c$ to a unique $j \in J$.  We thus have a cone $g_c\colon X_j \to D_c$, which factors uniquely through a morphism $h_c\colon X_j \to L_c$, so the following diagram commutes:
$$\xymatrix{
X_j\ar[dr]_{g_c}\ar@{.>}[r]^{h_c}&L_c\ar[d]\\&D_c
}$$
The resulting morphism $h\colon X \to L_D$, gives the unique factorization of $g$ through the cone $f$:
$$\xymatrix{
X\ar[dr]_{g}\ar@{.>}[r]^{h}&L_D\ar[d]^f\\&D
}$$
\end{proof}

\begin{corollary}
The product over $j \in J$ of objects $\prod_{i \in I_j} A_i$ is{\rm:}
$$
\prod_{i \in \coprod_{j \in J} I_j} A_i
$$
\end{corollary}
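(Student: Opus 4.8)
The plan is to recognize the product as a special case of the general limit computation in Lemma \ref{prodlim}, taking the indexing category $\mathcal{A}$ to be the discrete category on the set $J$ and the diagram $D$ to send $j\mapsto \prod_{i\in I_j}A_i$ with no nonidentity morphisms. Since a product is precisely a limit over such a discrete diagram, the entire statement should follow by unwinding the data appearing in the proof of Lemma \ref{prodlim}.

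First I would examine the underlying diagram of indexing sets $\bar D\colon \mathcal A^{\rm op}\to \mathrm{SET}$, which sends $j\mapsto I_j$ and, because $\mathcal A$ is discrete, has no nonidentity arrows. Consequently the equivalence relation $\sim$ generated by $i\sim \bar D(f)(i)$ is trivial, as there are no morphisms $f$ other than identities, so the colimit reduces to the disjoint union $C=\coprod_{j\in J}I_j$. This is the set that will index the formal product on the right-hand side.

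Next I would inspect each equivalence class $c\in C$. Because the relation collapses to equality, $c$ is a singleton $\{i\}$ with $i\in I_j$ for a unique $j$, and the associated category $c$ from the proof of Lemma \ref{prodlim} contains only its identity arrow. Hence the diagram $D_c\colon c\to\mathcal C$ picks out the single object $A_i$, and its limit is simply $L_c=\varprojlim D_c=A_i$.

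Finally, assembling via Lemma \ref{prodlim}, the limit is
$$
\varprojlim D=L_D=\prod_{c\in C}L_c=\prod_{i\in\coprod_{j\in J}I_j}A_i,
$$
which is exactly the asserted product. I expect no genuine obstacle here beyond the bookkeeping of identifying the colimit of index sets over a discrete category with the disjoint union and confirming that the equivalence relation degenerates to equality; all the real content is already carried by Lemma \ref{prodlim}.
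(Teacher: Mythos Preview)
Your proposal is correct and matches the paper's approach: the corollary is stated without proof, as an immediate specialization of Lemma~\ref{prodlim} to a discrete diagram, and your argument is precisely that specialization carried out in full detail.
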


Next we consider colimits.  Again given an object $A\in \mathcal{A}$, write $D(A)=\prod_{i\in I} A_i$.  An element $l \in L$ assigns an index $i_l$ to to each $A\in\mathcal{A}$.  Thus $l$ defines a diagram $D_l\colon \mathcal{A} \to \mathcal{C}$, sending an object $A$ to $A_{i_l}$, and a morphism $f\colon A \to A'$ to the component of $D(f)$: $f_l\colon D_l(A) \to D_l(A')$.

Let $C_l=\varinjlim{D_l}$ and let:$$
C_D=\prod_{l \in L} C_l$$

\begin{lemma} \label{prodcolim}
We have $\varinjlim{D}=C_D$.
\end{lemma}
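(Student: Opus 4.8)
The plan is to mirror the proof of Lemma \ref{prodlim}, exchanging the roles of limits and colimits of the underlying index sets; the contravariance of $\bar D$ is precisely what forces this interchange. I would first exhibit a cocone $f\colon D\to C_D$ and then verify its universal property, reducing each check to the corresponding universal property already available in $\mathcal C$ and in $\operatorname{SET}$.

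First I would record how cocones behave on index sets. A cocone $g\colon D\to X$ with $X=\prod_{m\in M}X_m$ assigns to each $A\in\mathcal A$ a morphism $g^A\colon D(A)\to X$ whose underlying index map $\mu_A\colon M\to\bar D(A)$ must satisfy $\bar D(f)\circ\mu_{A'}=\mu_A$ for every $f\colon A\to A'$ in $\mathcal A$, because composition in $\operatorname{Prod}(\mathcal C)$ reverses index maps and the cocone condition demands $g^{A'}\circ D(f)=g^A$. Thus each $m\in M$ determines a compatible family $(\mu_A(m))_A$, i.e.\ an element of $L=\varprojlim\bar D$, and giving a cocone on the level of index sets is the same as giving a map $M\to L$. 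In particular, for $X=C_D$ with $M=L$ the identity map is the canonical choice.

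Next I would check that each $l\in L$ genuinely yields a covariant diagram $D_l\colon\mathcal A\to\mathcal C$: for $f\colon A\to A'$ the component of $D(f)$ indexed by $i_l\in\bar D(A')$ is a $\mathcal C$-morphism $D_l(A)\to D_l(A')$ exactly because the limit condition gives $\bar D(f)(i_l)=i_l$, and functoriality is inherited from the composition rule in $\operatorname{Prod}(\mathcal C)$. The cocone $f\colon D\to C_D$ then has index map $l\mapsto i_l$ together with components the colimit structure maps $D_l(A)\to C_l$, and compatibility is immediate from the defining relations of the colimits $C_l=\varinjlim D_l$.

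For universality, given any cocone $g\colon D\to X=\prod_{m\in M}X_m$ I would use the induced map $\ell\colon M\to L$ from the first step; for each $m$ the components $g^A_m$ assemble into a cocone $D_{\ell(m)}\to X_m$ in $\mathcal C$, which factors uniquely through a morphism $h_m\colon C_{\ell(m)}\to X_m$, and these data together with $\ell$ determine the unique morphism $h\colon C_D\to X$ with $g=h\circ f$. The one genuinely delicate point, where I expect most of the care to go, is the index bookkeeping: one must track that morphisms in $\operatorname{Prod}(\mathcal C)$ move index sets in the opposite direction, so that the cocone condition produces an element of the \emph{limit} $L$ of index sets rather than a colimit, and that $D_l$ is therefore covariant. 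Once these directions are pinned down, every remaining verification is routine.
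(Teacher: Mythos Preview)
Your proposal is correct and follows essentially the same approach as the paper: construct the cocone $f\colon D\to C_D$ from the colimit structure maps $D_l(A)\to C_l$, then for an arbitrary cocone $g\colon D\to X=\prod_{m}X_m$ observe that each $m$ determines an element of $L$, assemble the resulting cocone $D_l\to X_m$ in $\mathcal C$, and factor it uniquely through $C_l$. The only difference is that you spell out the index-set bookkeeping (the contravariance of $\bar D$ and why one lands in the limit $L$) more explicitly than the paper does.
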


\begin{proof}
For each $A \in \mathcal{A}$ and $l \in L$ we have a morphism ${f^A}_l\colon D_l(A) \to C_l$ as $C_l$ is the colimit of $D_l$.  Thus we have a morphism $f^A\colon D(A)\to C_D$.  Collectively these morphisms form a co--cone $f\colon D \to C_D$.

Now consider a co--cone: $$g\colon D\to X=\prod_{j \in J} X_j$$
Each $j \in J$ is mapped to some $l \in L$ by $g$, and we have a co--cone $g_j\colon D_l \to X_j$.  This factors uniquely through a map $h_j\colon C_l \to X_j$, making the following diagram commute:
$$\xymatrix{
D_l\ar[d]\ar[dr]^{g_j}\\
C_l\ar@{.>}[r]_{h_j}&X_j
}$$
Thus we have a morphism $h\colon C_D \to X$ which gives the unique factorization of $g$ through $f$:
$$\xymatrix{
D\ar[d]_f\ar[dr]^{g}\\
C_D\ar@{.>}[r]_{h}&X
}$$
\end{proof}

\begin{corollary}
The coproduct distributes over products.  That is{\rm:}
$$
(\prod_{i\in I} A_i)\,\,\sqcup\,\, (\prod_{j \in J} B_j) \quad= \prod_{(i,j)\in I\times J} (A_i \sqcup B_j)
$$
\end{corollary}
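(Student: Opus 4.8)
The plan is to read this off as a special case of Lemma \ref{prodcolim}, applied to the two-object discrete diagram whose colimit is by definition the coproduct. Concretely, I would take $\mathcal{A}$ to be the discrete category on two objects, say $\{1,2\}$, and let $D\colon\mathcal{A}\to\operatorname{Prod}(\mathcal{C})$ send $1\mapsto\prod_{i\in I}A_i$ and $2\mapsto\prod_{j\in J}B_j$. Since $\mathcal{A}$ has no non-identity morphisms, $\varinjlim D$ is exactly the coproduct $(\prod_{i\in I}A_i)\sqcup(\prod_{j\in J}B_j)$, so it suffices to evaluate the formula $\varinjlim D=\prod_{l\in L}C_l$ of Lemma \ref{prodcolim} in this case.

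The first step is to identify the indexing set $L$. Recall that $L=\varprojlim\bar{D}$, where $\bar{D}\colon\mathcal{A}^{\rm op}\to\mathrm{SET}$ is the underlying diagram of indexing sets; the one genuine point to keep in mind is the contravariance, which arises because a morphism in $\operatorname{Prod}(\mathcal{C})$ carries a map of indexing sets pointing in the opposite direction. Here $\bar{D}$ sends $1\mapsto I$ and $2\mapsto J$, and since $\mathcal{A}^{\rm op}$ is again discrete on two objects, its limit is simply the product, giving $L=I\times J$.

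The second step is to identify each $C_l$. An element $l=(i,j)\in L$ selects the index $i$ at the object $1$ and the index $j$ at the object $2$, so the associated diagram $D_l\colon\mathcal{A}\to\mathcal{C}$ sends $1\mapsto A_i$ and $2\mapsto B_j$. As $\mathcal{A}$ is discrete, $C_l=\varinjlim D_l=A_i\sqcup B_j$. Substituting into $\prod_{l\in L}C_l$ then yields $\prod_{(i,j)\in I\times J}(A_i\sqcup B_j)$, which is the desired identification.

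Since every ingredient is supplied by Lemma \ref{prodcolim}, there is no serious obstacle here; the only thing requiring any care is the bookkeeping of the variance of the indexing sets, so that $L$ emerges as the product $I\times J$ rather than a coproduct, together with the elementary observation that limits and colimits over a discrete category reduce to products and coproducts respectively.
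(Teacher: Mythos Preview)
Your proposal is correct and is exactly the intended argument: the paper states this as an immediate corollary of Lemma~\ref{prodcolim} without further proof, and your unwinding of that lemma for the discrete two-object diagram is precisely what is implicit there.
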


Consider a morphism:  \begin{eqnarray}f\colon \prod_{i\in I} A_i \to \prod_{j\in J} B_j \label{mapclass}\end{eqnarray}  For each $i\in I$ let $B^i$ denote the product in $\mathcal{C}$ of  the $B_j$ satisfying $i_j=i$.  The maps $f_j \colon A_i \to B_j$ factor uniquely through a map  $f^i\colon A_i \to B^i$.

\bigskip
We now define a CMC structure on Prod$(\mathcal{C})$.

\begin{definition}\label{struct}
The map $f$ (from (\ref{mapclass})) is:

\bigskip\noindent
-- a cofibration precisely when for each $j\in J$ we have $f_j$ is a cofibration in $\mathcal{C}$,

\bigskip\noindent
-- a fibration precisely when for each $i\in I$ we have $f^i$ is a fibration in $\mathcal{C}$,

\bigskip\noindent
-- a weak equivalence precisely when the map $I \to J$ (induced by $f$) is a bijection, and each $f_j$, $j \in J$ (or equivalently each $f^i$, $i \in I$)  is a weak equivalence in $\mathcal{C}$.

\end{definition}
\begin{theorem}\label{thm:prod}
If $\mathcal C$ is a CMC then Definition \ref{struct} determines a CMC structure on Prod$(\mathcal{C})$.
\end{theorem}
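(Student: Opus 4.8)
The plan is to verify Quillen's axioms MC1--MC5 for the classes of Definition~\ref{struct}. Axiom MC1 (existence of all small limits and colimits) is already supplied by Lemmas~\ref{prodlim} and~\ref{prodcolim}, so only MC2--MC5 remain. The organising principle is that every morphism $f$ carries two kinds of data: a contravariant map of indexing sets $J\to I$, and, lying over it, the components $f_j\colon A_{i_j}\to B_j$ together with the induced maps to products $f^i\colon A_i\to B^i$, where $B^i=\prod_{\{j:\,i_j=i\}}B_j$. The single most useful observation is that if $f$ is a weak equivalence then its indexing map $J\to I$ is a \emph{bijection}; this is what makes the set-level bookkeeping collapse whenever an acyclic map is present.

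For MC2 (two-out-of-three) and MC3 (retracts) I would argue on the two layers separately and invoke the corresponding facts in $\mathcal C$. Two-out-of-three for weak equivalences reduces, on indexing sets, to the same property for bijections in the category of sets, and on components to two-out-of-three in $\mathcal C$ (when two maps are weak equivalences all indexing maps are bijections, so each product $B^i$ is a single factor and the components of the third map are genuine composites of the others). For retracts, a retract of a bijection is a bijection, which handles the indexing layer of weak equivalences; for cofibrations and weak equivalences each component $f'_{j'}$ is exhibited as a retract of a single component $f_{j}$, with $j$ the image of $j'$ under the indexing map of the retracting map. The delicate case is fibrations: one must show that $(f')^{i'}\colon A'_{i'}\to(B')^{i'}$ is a retract of $f^{i}$, and the section $(B')^{i'}\to B^i$ and retraction $B^i\to(B')^{i'}$ between the two \emph{products} must be assembled factor by factor from the components of the retracting maps, checking via the commuting index-level squares that the requisite projections land in the correct factors and that the two composites are identities. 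Closure under retracts in $\mathcal C$ then gives that $(f')^{i'}$ is a fibration.

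The heart of the proof is the lifting axiom MC4. Given a square with $f$ a cofibration (resp.\ acyclic cofibration) on the left and $p$ an acyclic fibration (resp.\ fibration) on the right, I would \emph{first} produce the lift on indexing sets: since one of $f,p$ is a weak equivalence its indexing map is a bijection, and composing with its inverse produces the compatible map $\epsilon\colon K\to J$ of indexing sets forced by the two triangles. \emph{Then}, for each target index $k$, the remaining problem is a single lifting problem in $\mathcal C$: the upper triangle demands $\delta_k f_{\epsilon(k)}=g_k$, while the whole bundle of lower-triangle conditions $p_l\delta_k=h_l$, taken over all $l$ indexed by $k$, assembles into the one equation $p^k\delta_k=h^k$ against the induced map $p^k\colon C_k\to D^k$. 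This is exactly why fibrations were defined through maps into products: $f_{\epsilon(k)}$ is a (acyclic) cofibration and $p^k$ is a (acyclic) fibration in $\mathcal C$, so the lift $\delta_k$ exists, and the family $(\delta_k)$ automatically constitutes a morphism. I expect this bundling step---recognising that the many lower-triangle conditions form a single lifting problem against $p^k$---to be the main obstacle, and the point at which the asymmetry flagged in the introduction (fibrations not being determined componentwise) is essential.

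Finally, for the factorisation axiom MC5 I would keep one indexing set fixed and factor in $\mathcal C$ index by index. For the (cofibration, acyclic fibration) factorisation, retain the target indexing set $J$ and factor each component $f_j\colon A_{i_j}\to B_j$ as a cofibration $c_j$ followed by an acyclic fibration $p_j$; the resulting $p$ has identity indexing map, so each $B^{j}$ is a single factor and $p$ is an acyclic fibration. For the (acyclic cofibration, fibration) factorisation, retain the source indexing set $I$ and factor each induced map $f^i\colon A_i\to B^i$ as an acyclic cofibration $c_i$ followed by a fibration $q_i$, then let $q_i$ play the role of $p^i$; this directly yields a fibration on the right and an acyclic cofibration on the left, with composite $f$ since $f^i$ post-composed with the projection to $B_j$ recovers $f_j$. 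Assembling these componentwise factorisations and combining MC1--MC5 then yields the desired CMC structure on $\mathrm{Prod}(\mathcal C)$.
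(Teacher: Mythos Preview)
Your proposal is correct and follows essentially the same route as the paper: MC1 via Lemmas~\ref{prodlim} and~\ref{prodcolim}, MC2 and MC3 by separating the index-set layer from the component layer, MC4 by bundling the lower-triangle conditions into a single lifting problem against $p^k$, and MC5 by factoring componentwise over $J$ (for cofibration/acyclic fibration) or over $I$ via the $f^i$ (for acyclic cofibration/fibration). The only cosmetic difference is that the paper treats the two lifting cases in separate lemmas with slightly different index conventions, whereas you give a uniform argument indexed by $K$; the content is the same.
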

The proof of Theorem \label{thm:prod} consists of a succession of lemmas below. Clearly the identity map on any object is a cofibration, fibration and weak equivalence.  Also it is clear that cofibrations and weak equivalences are each closed under composition.

\begin{lemma}
Fibrations are closed under composition.
\end{lemma}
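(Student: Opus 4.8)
The plan is to verify the fibration condition of Definition \ref{struct} for the composite directly, reducing it fibrewise over the index set $I$ to closure properties of fibrations already available in $\mathcal{C}$. Write the two composable fibrations as $f\colon \prod_{i\in I}A_i \to \prod_{j\in J}B_j$ and $g\colon \prod_{j\in J}B_j\to \prod_{k\in K}C_k$, with underlying set maps $j\mapsto i_j$ and $k\mapsto j_k$. The composite $gf$ has underlying set map $k\mapsto i_{j_k}$ and components $(gf)_k=g_kf_{j_k}$. Fixing $i\in I$, I must show that the induced map $(gf)^i\colon A_i\to C^i$ is a fibration in $\mathcal{C}$, where $C^i=\prod_{\{k:\,i_{j_k}=i\}}C_k$.

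First I would record the key index-theoretic decomposition. The fibre $\{k:i_{j_k}=i\}$ of the composite set map partitions as the disjoint union over $\{j:i_j=i\}$ of the sets $\{k:j_k=j\}$; consequently
$$
C^i=\prod_{\{k:\,i_{j_k}=i\}}C_k\cong\prod_{\{j:\,i_j=i\}}\Big(\prod_{\{k:\,j_k=j\}}C_k\Big)=\prod_{\{j:\,i_j=i\}}C^j.
$$
Under this identification one checks that $(gf)^i$ factors as
$$
A_i\xrightarrow{\,f^i\,}B^i=\prod_{\{j:\,i_j=i\}}B_j\xrightarrow{\;\prod g^j\;}\prod_{\{j:\,i_j=i\}}C^j=C^i,
$$
where the second arrow is the product over $\{j:i_j=i\}$ of the maps $g^j\colon B_j\to C^j$. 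This is a routine comparison of components: for $k$ with $j_k=j$ and $i_j=i$, both sides have $C_k$-component $g_kf_j=(gf)_k$.

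Now $f^i$ is a fibration in $\mathcal{C}$ because $f$ is a fibration, and each $g^j$ is a fibration because $g$ is. The substantive point — which I expect to be the main obstacle — is that the product $\prod_{\{j:\,i_j=i\}}g^j$ of fibrations is again a fibration in $\mathcal{C}$. This holds because fibrations are characterised by the right lifting property against acyclic cofibrations, and this lifting property is stable under arbitrary products: a lifting problem against $\prod g^j$ is solved by solving the induced problem against each $g^j$ separately and assembling the solutions, using that products in $\mathcal{C}$ exist and are computed factorwise. Since fibrations in $\mathcal{C}$ are closed under composition, $(gf)^i=\big(\prod g^j\big)\circ f^i$ is then a fibration for every $i\in I$, which is precisely the condition that $gf$ be a fibration in $\operatorname{Prod}(\mathcal{C})$.
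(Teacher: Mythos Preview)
Your proof is correct and follows essentially the same approach as the paper: both factor $(gf)^i$ as $f^i$ composed with the product of the $g^j$ over $\{j:i_j=i\}$ (the paper denotes this product map $g^i$), invoke closure of fibrations under products in a CMC, and then use closure under composition. Your write-up is somewhat more explicit about the index-set decomposition and justifies the product-of-fibrations step via the right lifting property, whereas the paper simply asserts that fibrations in a CMC are closed under products.
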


\begin{proof}
Suppose we have fibrations:$$f\colon \prod_{i\in I} A_i \to \prod_{j\in J} B_j,\qquad g\colon  \prod_{j\in J} B_j \to \prod_{k\in K} C_k$$
Then for each $j \in J$, we have a fibration: $g^j\colon B_j \to C^j$.  Given $i \in I$, let $g^i\colon B^i \to C^i$ denote the product (in $\mathcal{C}$) of all the $g^j$ which satisfy $i_j=i$.  Fibrations in a CMC are closed under products, so we know that $g^i$ is a fibration for each $i\in I$. Recall that $f^i$ is also a fibration for $i \in I$.

Now consider any $k\in K$ and let $j=j_k,\, i=i_j$.  The following diagram commutes (where the vertical arrows denote the natural projections from products):
$$\xymatrix{A_i\ar@/_2pc/[ddrr]_{(gf)_k}\ar[r]^{f^i}&B^i\ar[r]^{g^i}\ar[d]&C^i\ar[d]\\
&B_j\ar[r]^{g^j}&C^j\ar[d]\\
&&C_k
}$$
Thus given $i\in I$ we have that $(gf)^i=g^if^i$ is a fibration in $\mathcal{C}$.  Thus $gf$ is a fibration.
\end{proof}

Consider morphisms $f\colon A \to B$, $g\colon B \to C$ in Prod$(\mathcal{C})$.   Let the objects $A,B,C$ be indexed by sets $I,J,K$ respectively.

\begin{lemma}
If any two of $f$, $g$, $gf$ are weak equivalences, then so is the third.
\end{lemma}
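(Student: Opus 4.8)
The plan is to prove the 2-of-3 property for weak equivalences in $\operatorname{Prod}(\mathcal C)$ by decoupling it into two separate conditions, matching the structure of Definition \ref{struct}. Recall that a morphism is a weak equivalence precisely when the induced map of indexing sets is a bijection \emph{and} each component is a weak equivalence in $\mathcal C$. So for a composite $g\circ f$ with induced index maps $J\to I$ and $K\to J$, I would track separately (a) the bijectivity of the relevant maps of indexing sets and (b) the weak-equivalence status of the components in $\mathcal C$. The underlying index maps compose contravariantly, so the map of indexing sets induced by $gf$ is the composite $K\to J\to I$. For the set-theoretic part, bijections trivially satisfy 2-of-3: if two of the three maps $K\to J$, $J\to I$, $K\to I$ are bijections then so is the third.

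First I would set up notation: write the component of $f$ at $j\in J$ as $f_j\colon A_{i_j}\to B_j$ and the component of $g$ at $k\in K$ as $g_k\colon B_{j_k}\to C_k$, so that by the composition rule $(gf)_k=g_k f_{j_k}\colon A_{i_{j_k}}\to C_k$. I would then treat the three cases of 2-of-3 in turn. The cleanest case is when $f$ and $g$ are weak equivalences: both index maps are bijections, hence so is $K\to I$, and each component $(gf)_k=g_k f_{j_k}$ is a composite of weak equivalences in $\mathcal C$, hence a weak equivalence in $\mathcal C$ by 2-of-3 in $\mathcal C$. Thus $gf$ is a weak equivalence.

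For the remaining two cases — $f$ and $gf$ weak equivalences implying $g$, and $g$ and $gf$ weak equivalences implying $f$ — I would argue the index-map bijectivity first as above, which is the point that lets the components line up correctly: once I know $J\to I$ and $K\to J$ are bijections, each component of the third map is unambiguously identified (e.g.\ for fixed $k$, the relevant composite factors as $g_k f_{j_k}=(gf)_k$ with $f_{j_k}$ a weak equivalence), and I can invoke 2-of-3 in $\mathcal C$ on the components to conclude. In the case where $g,gf$ are weak equivalences, knowing $K\to J$ and $K\to I$ are bijections forces $J\to I$ to be a bijection; then for each $j$, choosing the unique $k$ with $j_k=j$ identifies $g_k$ and $(gf)_k$ as genuine weak equivalences sharing the factor $f_{j_k}=f_j$, so $f_j$ is a weak equivalence by 2-of-3 in $\mathcal C$.

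The main obstacle I anticipate is purely bookkeeping: keeping the contravariance of the index maps straight and verifying that, in each case, the bijectivity conclusion genuinely makes the components composable in the right order so that 2-of-3 in $\mathcal C$ applies to the correct triple $A_{i_{j_k}}\to B_{j_k}\to C_k$. There is no deep content beyond 2-of-3 in $\mathcal C$ together with the elementary fact that bijections satisfy 2-of-3; the care required is entirely in matching indices. I would therefore present the argument by first disposing of the set-level bijection claim uniformly, and only then reading off the component-level weak equivalences.
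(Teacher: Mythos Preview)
Your approach is essentially the same as the paper's: first establish that bijections of sets satisfy 2-of-3, then apply 2-of-3 in $\mathcal{C}$ componentwise to the triple $f_{j_k}$, $g_k$, $(gf)_k=g_kf_{j_k}$ for each $k\in K$. The paper's proof is terser and does not spell out the three cases or the surjectivity of $K\to J$ needed to hit every $f_j$, but the content is identical.
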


\begin{proof}
If any two of $f$, $g$, $gf$ induce bijections on sets, then so does the third.   For any $k \in K$ we have that two out $g_k$, $f_{j_k}$ and $g_k f_{j_k}$ are weak equivalences in $\mathcal{C}$.  Hence we may conclude that the third is as well.
\end{proof}

Now suppose we have a commutative diagram in Prod$(\mathcal{C})$:
\begin{eqnarray}\xymatrix{
A \ar[d]_f \ar[r]^\iota&B\ar[d]_g\ar[r]^\rho&A\ar[d]_f\\
A'\ar[r]^{\iota'}&B'\ar[r]^{\rho'}&A'
}\label{retract} \end{eqnarray}
where $\rho\iota=1_A$, $\rho'\iota'=1_{A'}$ and $A,B,A',B'$ are indexed by sets $I,J,I',J'$ respectively.

\begin{lemma}
If $g$ is a cofibration then so is $f$.  If $g$ is a weak equivalence then so is $f$.
\end{lemma}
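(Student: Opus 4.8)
The plan is to reduce both assertions, one component at a time, to the closure-under-retracts axioms already available in the ambient CMC $\mathcal{C}$. Recall that a morphism of $\operatorname{Prod}(\mathcal C)$ records two things: a map of indexing sets (pointing \emph{against} the morphism) and a family of component morphisms of $\mathcal C$. The first step is therefore to unwind what the retract diagram (\ref{retract}) says on indexing sets. Writing $q\colon J\to I$ and $p\colon I\to J$ for the index maps of $\iota$ and $\rho$, the identity $\rho\iota=1_A$ forces $qp=1_I$, so $I$ is a retract of $J$ in $\operatorname{SET}$; similarly $\rho'\iota'=1_{A'}$ gives $q'p'=1_{I'}$ for the index maps $q'\colon J'\to I'$ and $p'\colon I'\to J'$ of $\iota',\rho'$. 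Commutativity of the two squares translates, via the composition rule $(gf)_k=g_kf_{j_k}$, into the index relations $rq'=qs$ and $pr=sp'$, where $r\colon I'\to I$ and $s\colon J'\to J$ are the index maps of $f$ and $g$.

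The second step is to isolate, for each $i'\in I'$, an honest retract square in $\mathcal C$ whose middle vertical arrow is a single component of $g$. Set $i=r(i')$, $j'=p'(i')$ and $j=p(i)$. The relation $pr=sp'$ then gives $s(j')=j$, while $qp=1_I$ and $q'p'=1_{I'}$ give $q(j)=i$ and $q'(j')=i'$. These equalities are exactly what make the components $\iota_j\colon A_i\to B_j$, $\rho_i\colon B_j\to A_i$, $\iota'_{j'}\colon A'_{i'}\to B'_{j'}$ and $\rho'_{i'}\colon B'_{j'}\to A'_{i'}$ have matching sources and targets; the component identities extracted from $\rho\iota=1_A$ and $\rho'\iota'=1_{A'}$ (namely $(\rho\iota)_i=\rho_i\iota_{p(i)}=1_{A_i}$ and likewise in the primed case) show the two horizontal composites are identities. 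Reading off the $j'$-component of the left square $\iota'f=g\iota$ and the $i'$-component of the right square $f\rho=\rho'g$ shows that
$$
\xymatrix{
A_i \ar[d]_{f_{i'}} \ar[r]^{\iota_j}&B_j\ar[d]_{g_{j'}}\ar[r]^{\rho_i}&A_i\ar[d]^{f_{i'}}\\
A'_{i'}\ar[r]^{\iota'_{j'}}&B'_{j'}\ar[r]^{\rho'_{i'}}&A'_{i'}
}
$$
commutes, exhibiting $f_{i'}$ as a retract of $g_{j'}$ in $\mathcal C$. If $g$ is a cofibration then every $g_{j'}$ is a cofibration in $\mathcal C$, and since cofibrations in a CMC are closed under retracts, each $f_{i'}$ is a cofibration; as $i'\in I'$ was arbitrary, $f$ is a cofibration by Definition \ref{struct}.

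For the weak-equivalence claim the same retract squares, together with closure of weak equivalences under retracts in $\mathcal C$, show each $f_{i'}$ is a weak equivalence, so it remains only to check that $r\colon I'\to I$ is a bijection. Here I would use that $g$ being a weak equivalence makes $s\colon J'\to J$ a bijection, and then verify directly that $q's^{-1}p$ is a two-sided inverse of $r$: from $rq'=qs$ we get $r(q's^{-1}p)=(rq')s^{-1}p=qss^{-1}p=qp=1_I$, and from $pr=sp'$ we get $(q's^{-1}p)r=q's^{-1}(pr)=q's^{-1}sp'=q'p'=1_{I'}$. The one genuinely fiddly point, and the only real obstacle, is keeping the variance of the index maps straight and confirming the several index equalities above; once these are in place, everything else is a formal consequence of the CMC axioms for $\mathcal C$.
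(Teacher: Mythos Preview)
Your proof is correct and follows essentially the same approach as the paper: isolate, for each $i'\in I'$, a retract square in $\mathcal{C}$ exhibiting $f_{i'}$ as a retract of $g_{j'}$, and invoke closure under retracts in $\mathcal{C}$. The paper's version is terser—it simply asserts the component retract diagram and appeals to the general fact that retracts of bijections in $\mathrm{SET}$ are bijections—whereas you have written out the index bookkeeping and given an explicit two-sided inverse $q's^{-1}p$ for $r$; but the underlying argument is the same.
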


\begin{proof}
In the category of sets, retractions of bijections are bijections.  Thus if $g$ induces a bijection on indexing sets then so does $f$.  Now given any $i' \in I'$ we have a commutative diagram in $\mathcal{C}$:
$$\xymatrix{
A_i \ar[d]_{f_{i'}} \ar[r]^{\iota_j}&B_j\ar[d]_{g_{j'}}\ar[r]^{\rho_i}&A_i\ar[d]_{f_{i'}}\\
A'_{i'}\ar[r]^{\iota'_{j'}}&B'_{j'}\ar[r]^{\rho'_{i'}}&A'_{i'}
}$$
We observe that $f_{i'}$ is a retraction of $g_{j'}$.  Thus $f_{i'}$ is a cofibration if $g_{j'}$ is and $f_{i'}$ is a weak equivalence if $g_{j'}$ is.
\end{proof}

\begin{lemma}
If $g$ from (\ref{retract}) is a fibration then so is $f$.
\end{lemma}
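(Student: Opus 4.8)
The plan is to exhibit $f$ as a retract of $g$ at the level of the structure maps appearing in Definition \ref{struct}, exactly as was done for cofibrations and weak equivalences, but now working with the product maps $f^i$ rather than with individual components. Recall that $f$ is a fibration precisely when, for every $i \in I$, the induced map $f^i\colon A_i \to (A')^i$ into the product $(A')^i=\prod\{A'_{i'} : i_{i'}=i\}$ is a fibration in $\mathcal{C}$; and that $g$ being a fibration means that each $g^j\colon B_j \to (B')^j$ is a fibration in $\mathcal{C}$. So I fix $i \in I$ and try to realise $f^i$ as a retract, in the arrow category of $\mathcal{C}$, of a suitable $g^j$.

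First I would pin down the index $j$: set $j=j_i$, the image of $i$ under the map $I \to J$ induced by $\rho$. Reading $\rho\iota=1_A$ on indexing sets gives $i_j=i$, so that the components $\iota_j\colon A_i \to B_j$ and $\rho_i\colon B_j \to A_i$ make sense and satisfy $\rho_i\iota_j=1_{A_i}$ (this being the $i$-component of $\rho\iota=1_A$). This supplies the top (domain) row of the retract.

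The real work, and the step I expect to be the main obstacle, is building the bottom (codomain) row at the level of the \emph{products} $(A')^i$ and $(B')^j$ rather than of single components. I would assemble a map $(A')^i \to (B')^j$ out of the components $\iota'_{j'}$ for those $j'$ with $j_{j'}=j$, and a map $(B')^j \to (A')^i$ out of the components $\rho'_{i'}$ for those $i'$ with $i_{i'}=i$. The crux is that these are well defined, i.e. that the index maps line up: one must check that $\iota'$ sends each $j'$ lying over $j$ to an $i'$ lying over $i$, and that $\rho'$ sends each $i'$ lying over $i$ to a $j'$ lying over $j$. Both follow by reading the two commuting squares of (\ref{retract}), namely $g\iota=\iota'f$ and $f\rho=\rho'g$, on indexing sets and using $i_j=i$, $j_i=j$. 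Once the maps are in place, the identity $\rho'\iota'=1_{A'}$ — read componentwise, with $i'_{j'_{i'}}=i'$ ensuring the projections match — shows that their composite $(A')^i \to (B')^j \to (A')^i$ is the identity.

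It then remains to check that the squares
$$\xymatrix{
A_i \ar[d]_{f^i} \ar[r]^{\iota_j}&B_j\ar[d]_{g^j}\ar[r]^{\rho_i}&A_i\ar[d]_{f^i}\\
(A')^i\ar[r]&(B')^j\ar[r]&(A')^i
}$$
commute, which reduces factor-by-factor to the commutativity of $g\iota=\iota'f$ and $f\rho=\rho'g$ at the relevant components. This displays $f^i$ as a retract of $g^j$; since $g^j$ is a fibration in $\mathcal{C}$ and fibrations there are closed under retracts, $f^i$ is a fibration, and as $i\in I$ was arbitrary, $f$ is a fibration in $\operatorname{Prod}(\mathcal{C})$.
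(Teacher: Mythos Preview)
Your proposal is correct and follows exactly the paper's approach: the paper constructs the same retract diagram, writing the assembled bottom-row maps as $\iota^{\sim j}$ and $\rho^{\sim i}$ (``maps induced by components of $\iota',\rho'$''), and concludes in the same way. You have simply spelled out the index-chasing that the paper leaves implicit.
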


\begin{proof}
For $i \in I$ we have the following commutative diagram in $\mathcal{C}$:
$$\xymatrix{
A_i \ar[d]_{f^{i}} \ar[r]^{\iota_j}&B_j\ar[d]_{g^{j}}\ar[r]^{\rho_i}&A_i\ar[d]_{f^{i}}\\
A'^{i}\ar[r]^{{\iota}^{\sim j}}&B'^{j}\ar[r]^{{\rho}^{\sim i}}&A'^{i}
}$$
where ${\iota}^{\sim j}$, ${\rho}^{\sim i}$ are maps induced by components of $\iota',\rho'$ respectively, so $\rho^{\sim i}\iota^{\sim j}=1_{A'^i}$.  Thus $f^i$ is a retraction of $g^j$ so if $g^j$ is a fibration then so is $f^i$.
\end{proof}

Consider a morphism in Prod$(\mathcal{C})$: $$f\colon \prod_{i\in I} A_i \to \prod_{j\in J} B_j$$

\begin{lemma}
The morphism $f$ may be factorized as a cofibration followed by an acyclic fibration as well as an acyclic cofibration followed by a fibration.
\end{lemma}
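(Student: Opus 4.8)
The plan is to obtain each of the two factorizations by applying the corresponding factorization in $\mathcal{C}$ componentwise, with one crucial twist: the choice of indexing set for the intermediate object is forced by the weak-equivalence condition of Definition \ref{struct}. Since a weak equivalence must induce a \emph{bijection} on indexing sets, the ``acyclic'' half of each factorization must itself have a bijective index map, and this obliges me to build the two factorizations on different intermediate index sets.

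First I would treat the factorization as a cofibration followed by an \emph{acyclic} fibration. Here the fibration is the acyclic half, so its index map must be a bijection; I therefore take the intermediate object to be indexed by $J$. For each $j\in J$ I factor the component $f_j\colon A_{i_j}\to B_j$ in $\mathcal{C}$ as $A_{i_j}\xrightarrow{c_j} M_j \xrightarrow{p_j} B_j$ with $c_j$ a cofibration and $p_j$ an acyclic fibration. I then set $s\colon \prod_{i\in I}A_i \to \prod_{j\in J} M_j$ to have index map $j\mapsto i_j$ and components $c_j$, and $q\colon \prod_{j\in J} M_j \to \prod_{j\in J}B_j$ to have the identity index map and components $p_j$. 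The composition rule gives $(qs)_j = p_j c_j = f_j$, so $f=qs$. By Definition \ref{struct}, $s$ is a cofibration because each component $c_j$ is; and since the index map of $q$ is the identity, the assembled map $q^j$ equals $p_j$ for every $j$, so $q$ is a fibration, and its index map being a bijection with each $p_j$ a weak equivalence, it is acyclic.

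For the factorization as an \emph{acyclic} cofibration followed by a fibration the roles reverse: now the cofibration is the acyclic half, so it must have bijective index map, and I take the intermediate object to be indexed by $I$. For each $i\in I$ I factor the assembled map $f^i\colon A_i \to B^i$, where $B^i=\prod_{i_j=i}B_j$, in $\mathcal{C}$ as $A_i \xrightarrow{c_i} M_i \xrightarrow{p_i} B^i$ with $c_i$ an acyclic cofibration and $p_i$ a fibration. I set $s\colon \prod_{i\in I}A_i \to \prod_{i\in I}M_i$ with identity index map and components $c_i$, and $q\colon \prod_{i\in I}M_i \to \prod_{j\in J}B_j$ with index map $j\mapsto i_j$ and components $q_j=\pi_j p_{i_j}$, where $\pi_j\colon B^{i_j}\to B_j$ is the canonical projection. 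Then $(qs)_j = \pi_j p_{i_j} c_{i_j} = \pi_j f^{i_j} = f_j$, so again $f=qs$. Here $s$ is an acyclic cofibration since its index map is the identity and each $c_i$ is an acyclic cofibration; and by the universal property of the product the assembled map $q^i\colon M_i \to B^i$ is exactly $p_i$, hence a fibration, so $q$ is a fibration.

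The only genuine subtlety, and the step I would flag as the main obstacle, is recognizing this asymmetry: the intermediate object must be indexed by $J$ in the first factorization and by $I$ in the second, precisely because the acyclic half is constrained to have a bijective index map. Once the correct intermediate index set is identified, everything else is a routine unwinding of the composition rule and of Definition \ref{struct}, using only that $\mathcal{C}$ is a CMC to factor the maps $f_j$ (respectively $f^i$) and the universal property of products in $\mathcal{C}$ to identify the assembled maps $q^i$.
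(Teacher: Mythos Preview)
Your proof is correct and follows essentially the same approach as the paper: you index the intermediate object by $J$ and factor each $f_j$ for the cofibration/acyclic-fibration factorization, and you index by $I$ and factor each $f^i$ for the acyclic-cofibration/fibration factorization. Your explicit discussion of why the acyclic half forces the choice of intermediate index set is a nice addition that the paper leaves implicit.
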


\begin{proof}
For each $j \in J$ we have a map $f_j\colon A_{i_j} \to B_j$.  We may factorize $f_j=\rho_j\iota_j$ where $\iota_j\colon A_{i_j}\to C_j$ is a cofibration and $\rho\colon C_j \to B_j$ is an acyclic fibration  for some object $C_j$.  Thus we may factorize $f$ as a cofibration followed by an acyclic fibration:

$$ \prod_{i\in I} A_i  \stackrel{\iota}{\to} \prod_{j\in J} C_j\stackrel{\rho}\to \prod_{j\in J} B_j.$$

Now to obtain the second factorization note that for each $i\in I$ we may factorise $f^i=\rho^i\iota_i$ where $\iota_i\colon A_i \to C_i$ is an acyclic cofibration and $\rho^i\colon C_i \to B^i$ is a fibration.  Thus we may factorise $f$ as an acyclic cofibration followed by a fibration:
$$ \prod_{i\in I} A_i  \stackrel{\iota}{\to} \prod_{i\in I} C_i\stackrel{\rho}\to \prod_{j\in J} B_j$$
\end{proof}

Consider the following commutative diagram in Prod$(\mathcal{C})$:
$$\xymatrix{\prod_{i\in I} A_i\ar[d]_f\ar[r]^s  &\prod_{k\in K} C_k\ar[d]^g \\
\prod_{j\in J} B_j \ar[r]^t& \prod_{k\in K} D_k
}$$
where $f$ is a cofibration and $g$ is an acyclic fibration.

\begin{lemma}
There exists a morphism: $$h\colon \prod_{j\in J} B_j \to \prod_{k\in K} C_k$$ making the diagram commute.
\end{lemma}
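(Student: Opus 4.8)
The plan is to reduce this single lifting problem in $\operatorname{Prod}(\mathcal{C})$ to a family of ordinary lifting problems in $\mathcal{C}$, one for each $k \in K$, and then to reassemble the resulting component lifts into a morphism of $\operatorname{Prod}(\mathcal{C})$. First I would unpack the data. Write $k \mapsto i_k$ and $k \mapsto j_k$ for the index maps $K \to I$ and $K \to J$ induced by $s$ and $t$, with components $s_k \colon A_{i_k} \to C_k$ and $t_k \colon B_{j_k} \to D_k$, and let $f_{j_k}\colon A_{i_{j_k}} \to B_{j_k}$ be the corresponding component of $f$. Since $g$ is a weak equivalence, its induced index map $K \to K$ is a bijection; after identifying source and target indices via this bijection, each component $g_k \colon C_k \to D_k$ is, by the fibration and weak equivalence clauses of Definition \ref{struct}, an acyclic fibration in $\mathcal{C}$.

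Next I would read off the consequences of the commutativity $gs = tf$. Comparing the induced maps $K \to I$ forces $i_k = i_{j_k}$ for every $k$; denote this common index by $i(k)$. Comparing components then yields $g_k s_k = t_k f_{j_k}$ as maps $A_{i(k)} \to D_k$. Thus for each $k \in K$ we obtain a commutative square in $\mathcal{C}$
$$
\xymatrix{
A_{i(k)} \ar[d]_{f_{j_k}} \ar[r]^{s_k} & C_k \ar[d]^{g_k} \\
B_{j_k} \ar[r]_{t_k} & D_k
}
$$
in which $f_{j_k}$ is a cofibration, being a component of the cofibration $f$, and $g_k$ is an acyclic fibration.

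Since $\mathcal{C}$ is a CMC, cofibrations have the left lifting property with respect to acyclic fibrations, so for each $k$ there exists $h_k \colon B_{j_k} \to C_k$ with $h_k f_{j_k} = s_k$ and $g_k h_k = t_k$. I would then define $h \colon \prod_{j \in J} B_j \to \prod_{k \in K} C_k$ to be the morphism with index map $k \mapsto j_k$ and components $h_k$, and check via the composition formula that $hf = s$ and $gh = t$: the index-map identities follow from $i(k) = i_{j_k}$ together with the index map of $g$ being the identity under our identification, while the component identities are precisely $h_k f_{j_k} = s_k$ and $g_k h_k = t_k$. The only genuine work here is the indexing bookkeeping — tracking which object $A_{i(k)}$ sits at the source of the $k$-th square and verifying that the relevant index maps coincide — and once that is in place the lift is assembled componentwise with no further obstruction.
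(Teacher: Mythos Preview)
Your proof is correct and follows essentially the same approach as the paper: reduce to a family of lifting problems in $\mathcal{C}$ indexed by $K$, solve each using the CMC axioms, and assemble the lifts into a morphism of $\operatorname{Prod}(\mathcal{C})$. You are simply more explicit than the paper about the index bookkeeping (in particular, why the bijection on $K$ forces $g^k=g_k$ to be an acyclic fibration, and why $i_k=i_{j_k}$), which the paper leaves implicit.
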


\begin{proof}
We construct $h$ by setting for each $k \in K$ the component $h_k$ to be a map which makes the following diagram commute:
$$\xymatrix{ A_i\ar[d]_{f_j}\ar[r]^{s_k}  & C_k\ar[d]^{g_k} \\
 B_j \ar[r]^{t_k}\ar@{.>}[ur]^{h_k}&  D_k
}$$
We know that $h_k$ exists as $f_j$ is a cofibration and $g_k$ is an acyclic fibration.
\end{proof}

Consider the following commutative diagram in Prod$(\mathcal{C})$:
$$\xymatrix{\prod_{i\in I} A_i\ar[d]_f\ar[r]^s  &\prod_{j\in J} C_j\ar[d]^g \\
\prod_{i\in I} B_i \ar[r]^t& \prod_{k\in K} D_k
}$$
where $f$ is an acyclic cofibration and $g$ is a fibration.

\begin{lemma}
There exists a morphism: $$h\colon \prod_{i\in I} B_i \to \prod_{j\in J} C_j$$ making the diagram commute.
\end{lemma}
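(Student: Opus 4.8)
The plan is to construct the lift $h$ componentwise, dualising the construction of the previous lemma, but taking care that fibrations in $\operatorname{Prod}(\mathcal{C})$ are controlled by the induced maps $g^j$ into products rather than by the bare components $g_k$. First I would record the indexing data. Since $f$ is a weak equivalence, its induced map of indexing sets is a bijection, which I take to be the identity $I \to I$; thus $f$ has components $f_i\colon A_i \to B_i$, each of which is an acyclic cofibration in $\mathcal{C}$ because $f$ is an acyclic cofibration in $\operatorname{Prod}(\mathcal{C})$. Write the indexing maps of $s$, $g$, $t$ as $j \mapsto i_j$ ($J \to I$), $k\mapsto j_k$ ($K\to J$) and $k \mapsto i_k$ ($K \to I$). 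Comparing the two indexing maps $K \to I$ arising from the commuting hypothesis $gs = tf$ yields $i_{j_k} = i_k$ for all $k \in K$; this compatibility is what makes the whole construction fit together.

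For each $j \in J$, let $D^j$ denote the product in $\mathcal{C}$ of those $D_k$ with $j_k = j$, and let $g^j\colon C_j \to D^j$ be the induced map, which is a fibration in $\mathcal{C}$ precisely because $g$ is a fibration in $\operatorname{Prod}(\mathcal{C})$ (Definition \ref{struct}). For such $k$ the identity $i_k = i_{j_k} = i_j$ shows that each $t_k\colon B_{i_k}\to D_k$ has source $B_{i_j}$, so these maps assemble into a single map $t^j\colon B_{i_j} \to D^j$. I would then check that the square
$$
\xymatrix{
A_{i_j}\ar[d]_{f_{i_j}}\ar[r]^{s_j}&C_j\ar[d]^{g^j}\\
B_{i_j}\ar[r]_{t^j}&D^j
}
$$
commutes in $\mathcal{C}$: projecting to the factor indexed by any $k$ with $j_k = j$ reduces the claim to $g_k s_j = t_k f_{i_j}$, which is exactly the component at $k$ of the hypothesis $gs = tf$ (using $s_{j_k} = s_j$).

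Since $f_{i_j}$ is an acyclic cofibration and $g^j$ a fibration in the CMC $\mathcal{C}$, a lift $h_j\colon B_{i_j} \to C_j$ exists with $h_j f_{i_j} = s_j$ and $g^j h_j = t^j$. I would then define $h\colon \prod_{i\in I} B_i \to \prod_{j\in J} C_j$ to have indexing map $j \mapsto i_j$ (the same as that of $s$) and components $h_j$. The relations $h_j f_{i_j} = s_j$ give $hf = s$, while projecting $g^j h_j = t^j$ onto each factor $D_k$ gives $g_k h_j = t_k$, hence $gh = t$; so $h$ makes the diagram commute.

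The one genuine subtlety, and the step I expect to require the most care, is that one must lift against the \emph{product} fibration $g^j\colon C_j \to D^j$ rather than against the individual $g_k$: the several constraints $g_k h_j = t_k$ for all $k$ lying over a fixed $j$ cannot in general be solved one at a time, and it is exactly the definition of fibration in $\operatorname{Prod}(\mathcal{C})$ that packages them into a single lifting problem in $\mathcal{C}$. The bookkeeping of the three indexing maps, together with the verification that $i_{j_k} = i_k$, is routine but must be carried out so that all sources and targets match.
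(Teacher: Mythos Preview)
Your proof is correct and follows essentially the same approach as the paper: for each $j\in J$ you lift the acyclic cofibration $f_{i_j}$ against the fibration $g^j\colon C_j\to D^j$ in $\mathcal{C}$, exactly as the paper does (their $t^{\sim j}$ is your $t^j$). Your write-up is in fact more thorough about the indexing bookkeeping and the verification that the resulting $h$ makes both triangles commute.
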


\begin{proof}
We construct $h$ by setting for each $j \in J$ the component $h_j$ to be a map which makes the following diagram commute:
$$\xymatrix{ A_i\ar[d]_{f_i}\ar[r]^{s_j}  & C_j\ar[d]^{g^j} \\
 B_i \ar[r]^{t^{\sim j}}\ar@{.>}[ur]^{h_j}&  D^j
}$$
where $t^{\sim j}$ is the map induced by components of $t$.  We know that $h_j$ exists as $f_i$ is an acyclic cofibration and $g^j$ is a fibration.
\end{proof}

This completes the proof of Theorem \ref{thm:prod}, taking into account the existence of all small limits in colimits in Prod$(\mathcal{C})$ (Lemmas \ref{prodlim} and \ref{prodcolim}).  The definitions of cofibration and weak equivalence were the intuitive extensions of the CMC structure on $\mathcal{C}$, and determined the less obvious notion of fibration.

Consider a morphism $f\colon \prod_{i\in I} A_i \to 0$.  As $f$ has no components, they are all vacuously cofibrations, so $f$ is a cofibration.

\begin{lemma}\label{prodfibra}
An object $\prod_{i\in I} A_i \in${\rm Prod}$(\mathcal{C})$ is fibrant if and only if each of the $A_i$ are fibrant.
\end{lemma}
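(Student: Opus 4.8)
The plan is to unwind the definition of a fibrant object in terms of the fibration structure of Definition \ref{struct}. Recall that an object of a CMC is fibrant precisely when the unique morphism to the terminal object is a fibration, and that the terminal object of $\operatorname{Prod}(\mathcal{C})$ is the empty formal product $0$. So I would fix $X = \prod_{i \in I} A_i$ and analyse the unique map $f \colon X \to 0$, which has empty indexing set on the target.

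Writing $0 = \prod_{j \in \emptyset} B_j$, the definition of fibration requires that for each $i \in I$ the induced map $f^i \colon A_i \to B^i$ be a fibration in $\mathcal{C}$, where $B^i$ is the product in $\mathcal{C}$ of those $B_j$ with $i_j = i$. The key observation is that, since the indexing set of $0$ is empty, there are no such $j$, so $B^i$ is the empty product in $\mathcal{C}$, namely the terminal object $\ast$ of $\mathcal{C}$. Hence $f^i$ is simply the unique map $A_i \to \ast$.

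Putting these together, $f \colon X \to 0$ is a fibration in $\operatorname{Prod}(\mathcal{C})$ if and only if, for every $i \in I$, the unique map $A_i \to \ast$ is a fibration in $\mathcal{C}$ — that is, if and only if every $A_i$ is fibrant in $\mathcal{C}$. Since $f$ being a fibration is by definition the assertion that $X$ is fibrant, this yields both implications at once and completes the argument.

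I expect no substantive obstacle here, as the statement is essentially a matter of chasing definitions. The only point that needs genuine care is the correct reading of the empty product $B^i$ as the \emph{terminal} object of $\mathcal{C}$ (rather than some initial or empty object), since it is exactly this identification that links the condition ``$f$ is a fibration to $0$'' with the condition ``each $A_i$ is fibrant in $\mathcal{C}$.''
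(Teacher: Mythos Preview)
Your proposal is correct and follows essentially the same approach as the paper: both identify the terminal object of $\operatorname{Prod}(\mathcal{C})$ as the empty product $0$, observe that for the map $f\colon \prod_{i\in I}A_i\to 0$ each $B^i$ is the empty product in $\mathcal{C}$, i.e.\ the terminal object $T$, and hence $f^i\colon A_i\to T$ is a fibration precisely when $A_i$ is fibrant. Your write-up is simply a more detailed unpacking of the paper's terse two-sentence argument.
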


\begin{proof}
Let $f$ be as above.  The product in $\mathcal{C}$ over the empty set is the the terminal object $T \in \mathcal{C}$.  Thus for each $i\in I$ we have $f^i$ is the map $f^i\colon A_i \to T$.
\end{proof}

Another way of relating the CMC structures of $\mathcal{C}$ and Prod$(\mathcal{C})$ is through the following Quillen adjunction:

\bigskip
-- Let $F\colon \mathcal{C} \to {\rm Prod}(\mathcal{C})$ be the inclusion functor sending an object to itself (that is the product over the one element set of itself).

\bigskip
-- Let $G\colon {\rm Prod}(\mathcal{C})\to \mathcal{C}$ be the functor taking a formal product of objects in $\mathcal{C}$ to their actual product in $\mathcal{C}$. Given a morphism in Prod$(\mathcal{C})$:
$$f\colon \prod_{i\in I} A_i \to \prod_{j\in J} B_j,$$
note that the product in $\mathcal{C}$ of the $B_j$ over $j \in J$ is the same as the product over of $i \in I$ of the $B^i$.  Let $G(f)$ be the product of the $f^i$ over $i \in I$.

\bigskip
We have a natural transformation $\epsilon\colon FG\to 1_{{\rm Prod}(\mathcal{C})}$, whose components are the projections onto the corresponding factors.  We also have a natural isomorphism $\eta\colon 1_{\mathcal{C}} \to GF$  which is simply the identity map on each object of $\mathcal{C}$.  Thus $F$ is left adjoint to $G$: $F \dashv G$.

\begin{lemma}\label{Quill}
The pair $F,G$ form a Quillen adjuction.
\end{lemma}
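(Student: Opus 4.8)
The plan is to invoke the standard criterion that an adjunction $F\dashv G$ between closed model categories is a Quillen adjunction precisely when the left adjoint $F$ preserves cofibrations and acyclic cofibrations; equivalently, when the right adjoint $G$ preserves fibrations and acyclic fibrations (see \cite[Remark 9.8]{Dwye}). Since the adjunction $F\dashv G$ has already been established, only these preservation properties remain to be checked, and I would verify the conditions on the left adjoint $F$.

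The key observation is that $F$ sends an object $A$ of $\mathcal{C}$ to the formal product indexed by the one-element set, and a morphism $f\colon A\to B$ to the morphism $F(f)$ whose underlying map of indexing sets is the identity bijection of singletons and whose unique component is $f$ itself. I would then read off the required properties directly from Definition \ref{struct}: $F(f)$ is a cofibration if and only if its single component $f$ is a cofibration in $\mathcal{C}$, and $F(f)$ is a weak equivalence if and only if the induced map of indexing sets is a bijection---which is automatic---and $f$ is a weak equivalence in $\mathcal{C}$. Consequently $F$ preserves cofibrations and weak equivalences, and in particular acyclic cofibrations, which is exactly what the criterion demands.

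As a consistency check one could instead argue on the right adjoint $G$: by Definition \ref{struct} a fibration $f$ has each $f^i$ a fibration in $\mathcal{C}$, and $G(f)$ is by construction the product of the $f^i$ over $i\in I$; since fibrations (and likewise acyclic fibrations, being the maps with the right lifting property against cofibrations) are closed under products in any CMC, $G(f)$ is again a fibration, and an acyclic fibration when $f$ is. I expect no genuine obstacle here: the CMC structure of Definition \ref{struct} was engineered so that cofibrations and weak equivalences are detected componentwise over a bijective indexing map, so the preservation statements for $F$ are essentially immediate. The only points requiring care are matching the orientation convention (confirming that $F$, not $G$, is the left adjoint) and noting that the singleton-to-singleton index map is trivially a bijection, so that $F$ never creates the index-set obstruction that prevents a componentwise weak equivalence from being a weak equivalence in ${\rm Prod}(\mathcal{C})$.
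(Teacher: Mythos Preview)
Your proposal is correct and follows essentially the same approach as the paper: the paper's proof is the single sentence ``It suffices to note that $F$ preserves cofibrations and weak equivalences (as well as fibrations),'' and your argument simply unpacks why this holds from Definition~\ref{struct}. Your additional consistency check via $G$ is not in the paper but is harmless and correct.
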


\begin{proof}
It suffices to note that $F$ preserves cofibrations and weak equivalences (as well as fibrations).
\end{proof}

\begin{theorem}
The category of counital cocommutative dg coalgebras $\mathcal{V}$ is a CMC.
\end{theorem}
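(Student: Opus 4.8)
The plan is to assemble the result directly from the three structural theorems already established, exploiting the duality between coalgebras and pseudo-compact algebras rather than building anything new. The chain of identifications is the whole point: by Lemma \ref{coalg} the opposite category $\mathcal{V}^{\rm op}$ is precisely $\operatorname{Prod}(\mathcal{E})$, where $\mathcal{E}$ is the extended Hinich category of local pseudo-compact dg algebras.

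First I would invoke Theorem \ref{section3mainresult}, which equips $\mathcal{E}$ with a CMC structure whose fibrations, cofibrations and weak equivalences are those of Definition \ref{cmc}. Next, Theorem \ref{thm:prod}, applied with $\mathcal{C}=\mathcal{E}$, endows $\operatorname{Prod}(\mathcal{E})$ with a CMC structure, the three classes of maps being determined componentwise through Definition \ref{struct}. Combining this with Lemma \ref{coalg} yields that $\mathcal{V}^{\rm op}$ is a CMC.

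Finally I would use the elementary but essential fact that the closed model category axioms are self-dual: a category $\mathcal{D}$ is a CMC if and only if $\mathcal{D}^{\rm op}$ is one, with fibrations and cofibrations interchanged and weak equivalences preserved. Applying this to $\mathcal{D}=\mathcal{V}^{\rm op}$, whose opposite is $\mathcal{V}$, transports the structure back to $\mathcal{V}$. Concretely, a morphism of coalgebras is a cofibration (respectively a fibration, a weak equivalence) precisely when its $k$-linear dual, a morphism in $\operatorname{Prod}(\mathcal{E})$, is a fibration (respectively a cofibration, a weak equivalence).

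There is essentially no genuine obstacle here; the theorem is a formal consequence of the preceding work, with all the real content residing in Theorems \ref{section3mainresult} and \ref{thm:prod} and in Lemma \ref{coalg}. The only point that warrants a moment's care is checking that the dualisation identifying $\mathcal{V}^{\rm op}$ with pseudo-compact dg algebras is a genuine equivalence of categories, so that limits, colimits and the lifting and factorization axioms transport without loss across the passage to opposites; but this is exactly what was set up at the start of \S\ref{pseudosec} and sharpened in Theorem \ref{pseudo}, so the transport is automatic.
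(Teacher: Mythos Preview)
Your proposal is correct and follows essentially the same approach as the paper: identify $\mathcal{V}^{\rm op}$ with $\operatorname{Prod}(\mathcal{E})$ via Lemma~\ref{coalg}, apply Theorem~\ref{section3mainresult} and then Theorem~\ref{thm:prod}, and pass to the opposite category. The paper's proof is terser, but the logical structure is identical.
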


\begin{proof}
The category $\mathcal{V}^{\rm op}$ may be identified with the category of pseudo--compact algebras which by Lemma \ref{coalg} may be identified with Prod$(\mathcal{E})$.  By Theorem \ref{section3mainresult} $\mathcal{E}$ is a CMC.  By Theorem \ref{thm:prod}, the category $\mathcal{V}^{\rm op}$ and thus $\mathcal{V}$, is also a CMC.
\end{proof}

Note the terminal object in Prod$(\mathcal{E})$ which we have been denoting $0$ does in fact correspond to the $0$ algebra in $\mathcal{V}^{\rm op}$.  As as every object in $\mathcal{E}$ is fibrant, Lemma \ref{prodfibra} implies that every object in $\mathcal{V}^{\rm op}$ is fibrant.

In some ways our construction of a CMC on coalgebras may appear somewhat ad-hoc, as were it not for the factorization of pseudo--compact dg algebras provided by Theorem \ref{pseudo}, we would not be able to call on this identification.

However in this case the Quillen adjunction of Lemma \ref{Quill} may be expressed independently (after the fact) of the factorization of pseudo--compact dg algebras into Hinich algebras and acyclic algebras.  As well as the aesthetic benefit of viewing this link between the CMC structures on $\mathcal{E}$ and pseudo--compact dg algebras, this opens up the possibility of future generalization to the associative case, where the factorization may not be called on.

Clearly in this case the functor $F$ is just the inclusion of the category of Hinich algebras in the category of pseudo--compact dg algebras.  We end this section by giving a `factorization independent' construction of $G$.

Let $A$ be a pseudo--compact dg algebra, which from Theorem \ref{pseudo} we know factorizes as a product of acyclic and Hinich algebras:$$A=\prod_{i \in I} A_i$$
Then $G(A)$ is simply the product in $\mathcal{E}$ of the $A_i$.  Let $M_i \lhd A_i$ be the graded maximal ideal in each case. We must now consider two cases:

\bigskip
\noindent i) The case where $1\in A$ is not exact,

\bigskip
\noindent ii) The case where $1 \in A$ is exact.

\bigskip
In case (i) we know that at least one of the $A_i$ is a Hinich algebra, hence by Lemma \ref{nomap} we know that $G(A)$ is the product in the category of Hinich algebras of the $A_i^H$.

Let Jac$(A)$ denote the Jacobson radical of $A$.  This is simply the direct product of the $M_i$.  Let $d^{-1}$Jac$(A)$ denote the preimage under the differential of the Jacobson radical.  Recalling that in each $A_i$ we have $d^{-1}M_i=A_i^H$, we obtain:$$ d^{-1}{\rm Jac}(A)=\prod_{i \in I} A_i^H
$$

Finally to obtain the product in the category of Hinich algebras, we must take the Jacobson radical once more and reattach the unit.  We conclude:

\begin{lemma}
If $1 \in A$ is not exact then we have{\rm:}
$$
G(A)= k \oplus {\rm Jac}(d^{-1}{\rm Jac}(A)).$$
\end{lemma}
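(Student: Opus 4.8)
The plan is to reduce the statement to the single claim that $k\oplus{\rm Jac}\bigl(d^{-1}{\rm Jac}(A)\bigr)$, equipped with the obvious projections, is the categorical product of the algebras $A_i^H$ in the Hinich category, and then to read off the result from the identifications already made above. Two of the needed ingredients are in hand. Since $1\in A$ is not exact, at least one factor $A_{i_0}$ is a Hinich algebra; by the remark following Lemma \ref{morph} an acyclic algebra admits no morphism to a Hinich algebra, so there is no cone $\Lambda(x)\to D$ from the discrete diagram $D=\{A_i\}_{i\in I}$. Hence Lemma \ref{nomap} identifies $G(A)=\varprojlim D$ with the limit $\varprojlim D^H$ of the $A_i^H$ taken in the Hinich category. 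Moreover the computation $d^{-1}{\rm Jac}(A)=\prod_{i\in I}A_i^H$ has already been carried out. It therefore remains only to describe $\varprojlim D^H$ explicitly and to match it with $k\oplus{\rm Jac}(d^{-1}{\rm Jac}(A))$.

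Write $R=\prod_{i\in I}A_i^H=d^{-1}{\rm Jac}(A)$ for the product taken in pseudo--compact dg algebras, and for each $i$ let $N_i$ be the maximal ideal of the Hinich algebra $A_i^H$, so $A_i^H=k\oplus N_i$ with $N_i$ closed under $d$. First I would verify, exactly as in the computation ${\rm Jac}(A)=\prod_i M_i$ above, that ${\rm Jac}(R)=\prod_{i\in I}N_i$. The reverse inclusion ${\rm Jac}(R)\subseteq\prod_iN_i$ is immediate, since each coordinate preimage $\pi_j^{-1}(N_j)$ is a graded maximal ideal of $R$ with quotient $k$. For $\prod_iN_i\subseteq{\rm Jac}(R)$, suppose $(a_i)\in\prod_iN_i$ lay outside some graded maximal ideal $\mathfrak m$; then $1-(a_ib_i)\in\mathfrak m$ for suitable $(b_i)$, yet each $1-a_ib_i\in 1+N_i$ is a unit of the local algebra $A_i^H$, so $(1-a_ib_i)$ is a unit of $R$, contradicting its membership in $\mathfrak m$. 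This is precisely the ``take the Jacobson radical once more'' step.

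Next I would show that $k\oplus\prod_iN_i$, with projections $\lambda+(n_i)\mapsto\lambda+n_j$ onto the $A_j^H$, realizes $\varprojlim D^H$. That it is a Hinich algebra is clear: any element with nonzero scalar part is a unit, so it is local, and $\prod_iN_i$ is closed under $d$ because each $N_i$ is; the projections are plainly unital, continuous, $d$-commuting local maps. For the universal property, take a Hinich algebra $X=k\oplus\mathfrak m_X$ with a cone $(\phi_i\colon X\to A_i^H)_{i}$. The decisive point, which I expect to require the most care, is \emph{locality}: every morphism in the Hinich category carries the maximal ideal into the maximal ideal, so each $\phi_i$ restricts to a map $\mathfrak m_X\to N_i$, and these assemble into a single continuous $d$-map $\mathfrak m_X\to\prod_iN_i$; adjoining the identity on scalars produces the factorization $X\to k\oplus\prod_iN_i$, unique because unitality together with the prescribed projections forces all its values.

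Combining the three steps yields
\[
G(A)=\varprojlim D^H=k\oplus\prod_{i\in I}N_i=k\oplus{\rm Jac}(R)=k\oplus{\rm Jac}\bigl(d^{-1}{\rm Jac}(A)\bigr),
\]
as required. The genuinely delicate ingredient is the explicit identification of the product in the Hinich category, where locality of morphisms is exactly what prevents the answer from being the naive full product $\prod_iA_i^H$ and instead pins it down to the single-unit algebra $k\oplus\prod_iN_i$.
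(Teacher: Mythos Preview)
Your proposal is correct and follows the same approach as the paper: both reduce to Lemma~\ref{nomap} via the observation that non-exactness of $1$ forces some factor to be Hinich, then use $d^{-1}{\rm Jac}(A)=\prod_iA_i^H$, and finally identify the Hinich-category product as ``take the Jacobson radical and reattach the unit.'' The paper states this last step in a single sentence, whereas you supply the details (computing ${\rm Jac}(\prod_iA_i^H)=\prod_iN_i$ and verifying the universal property of $k\oplus\prod_iN_i$ via locality of Hinich morphisms), but the argument is the same.
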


If we introduce the notation dJac$(A)$ to denote the intersection of all graded maximal \emph{differential} ideals, then we may write this more concisely.   We have that dJac$(A_i)$ is the graded maximal ideal in $A_i^H$ for each $i$ and thus dJac$(A)$ is the product of these maximal ideals.  In case (i) we may therefore write:

$$
G(A)= k \oplus {\rm dJac}(A).$$

We now consider case (ii).   In this case all the $A_i$ are acyclic algebras.  We may apply Lemma \ref{ifmap} to deduce that $G(A)$ is the product in the category of Hinich algebras of the $A_i^0$, tensored with $\Lambda(x)$.  Recalling that in each $A_i$ we have $d^{-1}0=A_i^0$, we obtain:$$ d^{-1}0=\prod_{i \in I} A_i^0
$$

Again to obtain the product in the category of Hinich algebras, we must take the Jacobson radical  and reattach the unit.  We conclude:

\begin{proposition}
If $1 \in A$ is exact then we have{\rm:}
$$
G(A)= (k \oplus {\rm Jac}(d^{-1}0)) \otimes\Lambda(x).$$
\end{proposition}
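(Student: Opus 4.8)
The plan is to reduce the claim to the limit computation already carried out in Lemma \ref{ifmap}, combined with the same ``Jacobson radical plus unit'' manoeuvre used just above in case (i). First I would observe that the hypothesis that $1\in A$ is exact forces every factor to be acyclic. Writing $A=\prod_{i\in I}A_i$ and $1=dw$ with $w=(w_i)_{i\in I}$, we get $1=dw_i$ in each $A_i$. But in a Hinich algebra the differential is compatible with the projection onto the residue field $k=A_i/M_i$, on which the differential is necessarily zero; projecting $1=dw_i$ would then give $1=0$ in $k$, a contradiction. Hence no $A_i$ is a Hinich algebra, and by Lemma \ref{twotypes} all the $A_i$ are acyclic, as asserted in the text preceding the statement.

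Next, since each acyclic $A_i$ contains a degree-$1$ element $x_i$ with $dx_i=1$ (as recorded before Lemma \ref{acyc}), the assignments $x\mapsto x_i$ define maps $\Lambda(x)\to A_i$ which, the indexing diagram $D$ being discrete, assemble automatically into a cone $\Lambda(x)\to D$. Lemma \ref{ifmap} then applies and gives $G(A)=\varprojlim(D)=\varprojlim(D^0)\otimes\Lambda(x)$, where $\varprojlim(D^0)$ is the limit in $\mathcal{E}$ of the diagram $D^0$ of closed subalgebras $A_i^0$. Because every object of $D^0$ is a Hinich algebra, there is no cone $\Lambda(x)\to D^0$, so Lemma \ref{nomap} identifies $\varprojlim(D^0)$ with the product of the $A_i^0$ taken in the Hinich category.

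It then remains to make this Hinich-category product explicit. I would note that $d$ acts componentwise on $A$, so its kernel is $d^{-1}0=\prod_{i\in I}A_i^0$, the product here being taken in pseudo--compact algebras and generally not local. Exactly as in case (i), the product in the Hinich category is recovered from $\prod_{i}A_i^0$ by passing to the Jacobson radical and reattaching the unit: any Hinich algebra mapping compatibly to the $A_i^0$ is local, so its image lies in $k\oplus\mathrm{Jac}(\prod_{i}A_i^0)=k\oplus\mathrm{Jac}(d^{-1}0)$, and this subalgebra is itself a Hinich algebra carrying the required universal property. Combining this with the previous paragraph yields $G(A)=(k\oplus\mathrm{Jac}(d^{-1}0))\otimes\Lambda(x)$.

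The only genuine subtlety is this last identification, and I expect no real difficulty since it is a verbatim repetition of the argument just given in case (i). The single point to handle with care is that the object produced by Lemma \ref{ifmap} is the limit in $\mathcal{E}$, not the naive product of pseudo--compact algebras; this is precisely what necessitates the Jacobson-radical correction converting $d^{-1}0=\prod_i A_i^0$ into the local algebra $k\oplus\mathrm{Jac}(d^{-1}0)$.
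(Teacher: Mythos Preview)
Your proposal is correct and follows essentially the same route as the paper: both apply Lemma \ref{ifmap} to obtain $G(A)=\varprojlim(D^0)\otimes\Lambda(x)$, identify $d^{-1}0$ with the pseudo--compact product $\prod_i A_i^0$, and then recover the Hinich--category product as $k\oplus\mathrm{Jac}(d^{-1}0)$. You simply spell out in more detail the points the paper leaves implicit (why exactness of $1$ forces every factor to be acyclic, and why a cone $\Lambda(x)\to D$ exists).
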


Combining with Lemma \ref{quilladjEH} we have that the inclusion of the original category of Hinich algebras in the category of all pseudo--compact dg algebras $FE$, has a right adjoint $HG$, together with which it forms a Quillen adjunction.  We may write $HG$ explicitly:

\begin{proposition}
Let $A$ be a pseudo--compact dg algebra.  We have{\rm:}
$$
HG(A)= \left\{ \begin{array}{cl}
 k \oplus {\rm dJac}(A),& 1 \in A { \rm \,\, is\,\, not\,\, exact,}\\
k \oplus {\rm Jac}(d^{-1}0)[x],& 1 \in A { \rm \,\,is\,\, exact.}\end{array}\color{white} \right|\color{black}
$$
\end{proposition}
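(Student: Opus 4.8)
The plan is to obtain $HG(A)$ by feeding the explicit descriptions of $G(A)$ already established in the preceding Lemma and Proposition into the functor $H$, which sends an object of $\mathcal{E}$ to its full Hinich subalgebra $A^H$. Since $G(A)$ has been computed in each of the two cases, all that remains is to understand how $H$ acts on those particular algebras, and for this I would split along exactly the same dichotomy used to describe $G(A)$: whether or not $1 \in A$ is exact.

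When $1 \in A$ is not exact, the preceding Lemma gives $G(A)= k \oplus {\rm dJac}(A)$ and exhibits it as the product, taken in the category of Hinich algebras, of the $A_i^H$; in particular $G(A)$ is itself a Hinich algebra. Because $H$ fixes Hinich algebras (recall $B^H=B$ whenever $B$ is a Hinich algebra), I would conclude at once that $HG(A)=G(A)=k\oplus {\rm dJac}(A)$. When $1 \in A$ is exact, all the factors $A_i$ are acyclic and the preceding Proposition gives $G(A)=(k \oplus {\rm Jac}(d^{-1}0))\otimes \Lambda(x)$, an acyclic algebra of the form $B^0\otimes\Lambda(x)$ with $B^0=k\oplus {\rm Jac}(d^{-1}0)$ a Hinich algebra whose maximal ideal is $I={\rm Jac}(d^{-1}0)$. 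Here I would invoke the description of the full Hinich subalgebra of an acyclic algebra used in the proof of Lemma \ref{acycacyc}, namely $B^H = k \oplus I[x]/x^2$ (the elements $a+xb$ with $a,b\in I$); specializing to $I={\rm Jac}(d^{-1}0)$ gives $HG(A)=k\oplus {\rm Jac}(d^{-1}0)[x]$, as required.

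Since the substantial work has already been carried out in computing $G(A)$, there is no genuine analytic difficulty here; the argument is essentially bookkeeping. The one point requiring care is to confirm that $H$ interacts correctly with the passage to the actual $\mathcal{E}$-product performed by $G$, so that applying $H$ to that product agrees with the two closed formulas, and to keep the condensed notations straight: ${\rm dJac}(A)$ denotes the ideal arising from the two-step ${\rm Jac}\,d^{-1}{\rm Jac}$ construction of case (i), and ${\rm Jac}(d^{-1}0)[x]$ denotes $I[x]/x^2$ for $I = {\rm Jac}(d^{-1}0)$ in case (ii). As both identifications are already in place in the lines immediately preceding the statement, the proof reduces to citing case (i), citing case (ii), and using that $H$ is the identity on Hinich algebras while returning $B^H$ on acyclic algebras.
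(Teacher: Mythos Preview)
Your proposal is correct and follows precisely the route the paper intends: the proposition is stated without a separate proof environment, being an immediate consequence of the two preceding explicit formulas for $G(A)$ together with the action of $H$ (identity on Hinich algebras, and $B^H=k\oplus I[x]/x^2$ on acyclic algebras as in the proof of Lemma~\ref{acycacyc}). Your bookkeeping of the notations ${\rm dJac}(A)$ and ${\rm Jac}(d^{-1}0)[x]$ matches the paper's conventions exactly.
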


\section{Curved Lie algebras} \label{curvedsec}

In \cite{Hini} the CMC structure on the Hinich category is induced via a Quillen equivalence with the category of dg Lie algebras.  It is natural to ask if this equivalence extends to the extended Hinich category $\mathcal{E}$, and if so, in what way we should extend the category of Lie algebras to include `acyclic' objects.  It transpires that the correct extension is the category of curved Lie algebras.

A curved Lie algebra is a graded Lie algebra $A$ containing a distinguished degree -2 element $w_A\in A$ (referred to as the curvature of $A$), and having a degree -1 derivation $d$, such that:
$$
d(ab)=(da)b + -1^{{\rm deg}\,a} a (db),\qquad d^2a=[w_A,a].
$$
We abuse terminology slightly by referring to $d$ as the {\sl differential} of $A$.

Morphisms of curved Lie algebras must respect the grading, Lie bracket, differential and distinguished element.  The usual dg  Lie algebras are precisely the curved Lie algebras $A$, satisfying $w_A=0$.  We refer to these as uncurved and to curved Lie algebras $A$ with $w_A\neq 0$ as genuinely curved or gencurved.  Clearly there are no morphisms from an uncurved Lie algebra to a gencurved one.  Denote the category of curved Lie algebras $\mathcal{G}$ and the subcategory of usual dg Lie algebras $\mathcal{L}$.

The category $\mathcal{L}$ is a CMC \cite{Quil}.  In particular its CMC structure has fibrations precisely the surjective maps \cite[Theorem 5.1]{Quil}.

\begin{remark}
We note at this point that the arguments for the extension of a CMC structure on dg Lie algebras to curved Lie algebras may be applied verbatim to extending the CMC structure on dg associative algebras to curved associative Lie algebras.

This raises the distinct possibility that a CMC structure on conilpotent coassociative coalgebras could be transferred from the category of curved associative algebras, by extending our adjoint pair of functors between $\mathcal{E}$ and $\mathcal{G}$.
\end{remark}

In order for it to make sense to say that $\mathcal{E}$ is Quillen equivalent to $\mathcal{G}$ we must first establish that $\mathcal{G}$ is a CMC.

\begin{lemma}
The category $\mathcal{G}$ is closed under taking small limits.
\end{lemma}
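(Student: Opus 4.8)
The plan is to construct the limit of a small diagram $D\colon \mathcal{A}\to\mathcal{G}$ explicitly on the underlying graded vector spaces and then to equip it with the extra curved-Lie structure. First I would form the limit $L=\varprojlim_{\mathcal A} D$ in the category of graded $k$-vector spaces; this exists since graded vector spaces are complete. Concretely $L$ may be realized as the subspace of $\prod_{\alpha\in\mathcal A}D(\alpha)$ consisting of those tuples $(a_\alpha)$ that are compatible with every structure morphism $D(\phi)$. Since the Lie bracket, the differential $d$, and the grading are all preserved by morphisms of curved Lie algebras, each of these operations is defined componentwise on $\prod_\alpha D(\alpha)$ and restricts to $L$: if $(a_\alpha)$ and $(b_\alpha)$ are compatible tuples then so are $([a_\alpha,b_\alpha])$ and $(da_\alpha)$, because each $D(\phi)$ respects bracket and differential. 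Thus $L$ inherits a graded Lie algebra structure and a degree $-1$ derivation $d$.

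The point requiring genuine care is the curvature. A curved Lie algebra carries a \emph{distinguished} degree $-2$ element $w$, and morphisms must send curvature to curvature; so the candidate curvature of $L$ is the compatible tuple $w_L=(w_{D(\alpha)})_\alpha$. I would first check that this tuple is compatible, i.e. lies in $L$: this is exactly the statement that every structure map $D(\phi)$ carries $w_{D(\alpha)}$ to $w_{D(\beta)}$, which holds because the $D(\phi)$ are morphisms of curved Lie algebras. Then I must verify the defining identity $d^2 a=[w_L,a]$ for $a=(a_\alpha)\in L$; but both sides are computed componentwise, and the identity holds in each $D(\alpha)$, so it holds in $L$. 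This confirms that $(L,d,w_L)$ is an object of $\mathcal{G}$, and the projections $L\to D(\alpha)$ are visibly curved-Lie morphisms.

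Finally I would check the universal property. Given any cone $c\colon C\to D$ from a curved Lie algebra $C$, the underlying graded-vector-space maps factor uniquely through $L$ by the universal property of the vector-space limit, giving a unique graded-linear $u\colon C\to L$ with projections recovering the $c_\alpha$. It remains to see that $u$ is a morphism in $\mathcal{G}$: since the projections $L\to D(\alpha)$ are jointly monic and each $c_\alpha$ respects bracket, differential, and curvature, the composites of $u$ with the projections respect these structures, and joint monicity of the projections forces $u$ itself to respect them. In particular $u(w_C)$ has the same projections as $w_L$, hence equals $w_L$.

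The main obstacle I anticipate is purely the bookkeeping around the curvature: unlike the bracket and differential, the curvature is not an operation but a chosen \emph{element}, so one must separately verify that the family $(w_{D(\alpha)})_\alpha$ is compatible and deserves to be called $w_L$, and then confirm the relation $d^2=[w_L,-]$ at the level of the limit rather than taking it for granted. Everything else reduces to the observation that all the relevant structure is defined componentwise and preserved by the diagram morphisms, so it descends to the vector-space limit; the uniqueness half of the universal property follows from the joint monicity of the canonical projections out of $L$.
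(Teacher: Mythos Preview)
Your proposal is correct and follows essentially the same approach as the paper: form the limit of the underlying diagram of graded vector spaces, observe that the bracket and differential restrict to it, take the compatible tuple of curvatures as $w_L$, and verify the universal property. Your write-up simply supplies the details (compatibility of the curvature tuple, the identity $d^2=[w_L,-]$, and the joint-monicity argument) that the paper leaves implicit in its one-paragraph proof.
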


\begin{proof}
Given a small diagram $\mathcal{D}$, one may regard it as a diagram of graded vector spaces.  Let $L$ denote the limit of this diagram.  There is a natural differential and Lie bracket induced on $L$.  Finally let the curvature $w_L\in L$ be the unique element which maps to the curvature of each object in $\mathcal{D}$.  Then any cone from a curved Lie algebra $X$ to $\mathcal{D}$ will factor through a unique map of curved Lie algebras $X \to L$.
\end{proof}

Note that the product of curved Lie algebras has the direct sum of the underlying vector spaces as its underlying vector space.  We will use the notation $\times$ to denote the product of curved Lie algebras.  Also we will denote the curved Lie algebra freely generated by certain generators  by placing the generators in angled brackets $\langle \rangle$.  Conversely if elements of a curved Lie algebra already under consideration are placed in angled brackets $\langle \rangle$, then  it will denote the ideal generated by those elements.

\begin{lemma}
The category $\mathcal{G}$ is closed under taking small colimits.
\end{lemma}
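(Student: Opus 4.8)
The plan is to reduce to the existence of coproducts and coequalizers, since every small colimit is built from these. Coequalizers are straightforward: given $f,g\colon A\to B$ in $\mathcal{G}$, let $I$ be the smallest graded Lie ideal of $B$ that is closed under the differential and contains all elements $f(a)-g(a)$, $a\in A$. Because both $f$ and $g$ preserve curvature we have $f(w_A)=w_B=g(w_A)$, so $w_B$ is unaffected by the relations and descends to a well-defined curvature in $B/I$; the differential and bracket descend as well, and the defining identities of a curved Lie algebra are inherited from $B$. One then checks directly that $B/I$ together with the projection is the coequalizer.

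The substance of the lemma is the construction of coproducts, where the distinguished curvature elements are the one genuinely new feature. Given a family $\{A_\alpha\}$ of curved Lie algebras, I would first form their coproduct $P=\coprod_\alpha A_\alpha$ in the category of graded Lie algebras (the free product); this is well defined since graded Lie algebras are the algebras over an operad and hence cocomplete. The differentials of the factors extend uniquely to a degree $-1$ derivation $d$ on $P$ restricting to the original differential on each factor. The curvatures $w_{A_\alpha}$ give a priori distinct degree $-2$ elements of $P$ which any coproduct in $\mathcal{G}$ must identify, so I would pass to the quotient $C=P/W$, where $W$ is the smallest differential graded Lie ideal containing all the differences $w_{A_\alpha}-w_{A_\beta}$. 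Write $w$ for the common image of the $w_{A_\alpha}$ in $C$.

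It remains to verify that $C$ is genuinely a curved Lie algebra and that it has the correct universal property. The derivation $d$ descends to $C$ because $W$ is a differential ideal, and $w$ has degree $-2$, so the only point requiring care is the relation $d^2=[w,-]$. Consider $\delta=d^2-[w,-]$ on $C$. Since $d$ is odd, $d^2=\tfrac12[d,d]$ is a degree $-2$ derivation, and $[w,-]$ is a derivation, so $\delta$ is a degree $-2$ derivation; on the image of each generating factor $A_\alpha$ one has $\delta=d^2-[w_{A_\alpha},-]=0$, and a derivation vanishing on a generating set vanishes identically, whence $d^2=[w,-]$ on all of $C$. For universality, any cocone $\{\phi_\alpha\colon A_\alpha\to B\}$ in $\mathcal{G}$ induces a graded Lie map $P\to B$ commuting with $d$; since each $\phi_\alpha$ sends $w_{A_\alpha}\mapsto w_B$, this map kills every difference $w_{A_\alpha}-w_{A_\beta}$ and hence factors uniquely through $C$, the factored map preserving the curvature $w\mapsto w_B$. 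The main obstacle is precisely this identification of the a priori distinct curvatures, resolved by the quotient by $W$, together with the check that the quotient still satisfies $d^2=[w,-]$; granting coproducts and coequalizers, the remaining small colimits then assemble in the usual formal manner.
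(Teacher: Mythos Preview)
Your argument is correct. The reduction to coproducts and coequalizers is sound, the coequalizer construction is the expected quotient, and for coproducts your two-step construction (free product in graded Lie algebras, then quotient by the differential ideal generated by the curvature differences) works; the verification that $\delta=d^2-[w,-]$ is a derivation vanishing on a generating set is exactly the right way to close the argument.

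The paper organizes the same content differently and a bit more economically. Rather than splitting into coproducts and coequalizers, it treats an arbitrary small diagram $\mathcal{D}$ at once by first adjoining the initial object $\langle w\rangle$ (with its unique map to every vertex) and then taking the colimit of this enlarged diagram in graded Lie algebras. This single move accomplishes what your quotient by $W$ does: all curvatures are forced to coincide with the image of $w$, so no separate identification step is needed. The differential is then extended by the Leibniz rule, and the identity $d^2x=[w_C,x]$ is checked by induction on bracket length, which is the inductive phrasing of your ``derivation vanishing on generators'' argument. Your route is more explicit and perhaps more transparent about where the curvature identification happens; the paper's route is shorter and handles all colimits uniformly without the coproduct/coequalizer decomposition.
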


\begin{proof}
Given a small diagram $\mathcal{D}$,  let $D_w$ denote the extension of $\mathcal D$ which includes the initial object $\langle w \rangle$ and its morphism to each object of $\mathcal{D}$.  Let $C$ denote the colimit of the underlying diagram of graded Lie algebras.  Define a differential on $C$ inductively via the Leibniz rule.  Further let $w_C$ the curvature of $C$ be the image of $w \in \langle w \rangle$.

It remains to verify that $d^2x=[w_C,x]$ for all $x \in C$.  This follows by induction on the largest length of a term in $x$:  Let $a,b \in C$ satisfy $d^2a=[w_C,a],\,d^2b=[w_C,b]$.  From the Jacobi identity we have:
$$
d^2[a,b]=[d^2a,b]+[a,d^2b]=[[w_C,a],b] + [a,[w_C,b]]=[w_C,[a,b]].
$$
\end{proof}

We next define a CMC structure on $\mathcal{G}$, extending the one from \cite{Quil} on $\mathcal{L}$ {\rm :}

\begin{definition} \label{cmccurved}
We define weak equivalences, fibrations and cofibrations in $\mathcal{G}$ as follows:

\bigskip
\noindent -- Weak equivalences are the weak equivalences in $\mathcal{L}$, together with all maps between gencurved Lie algebras.

\bigskip
\noindent -- Fibrations are all surjective maps.

\bigskip
\noindent -- Cofibrations are all maps which have the left lifting property with respect to acyclic fibrations (as defined above).

\end{definition}

From the definition it is clear that all three classes contain all identity maps and are closed under composition and retraction.  Further it is clear that weak equivalences satisfy the 2 of 3 rule and that cofibrations satisfy the left lifting property with respect to acyclic fibrations.  Finally note that all three classes restricted to $\mathcal{L}$ are precisely the corresponding class in the CMC sturcture on $\mathcal{L}$ from \cite[Theorem 5.1]{Quil}.

\begin{lemma}\label{acyccofllp}
Acyclic cofibrations are precisely the maps which have the left lifting property for all fibrations.
\end{lemma}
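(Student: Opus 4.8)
The plan is to prove the two inclusions separately, reducing as much as possible to Quillen's model structure on $\mathcal{L}$ and exploiting the fact that there are no morphisms out of an uncurved Lie algebra into a gencurved one. One half is nearly formal: if $f$ has the left lifting property with respect to all fibrations, then since acyclic fibrations are in particular fibrations, $f$ automatically has the left lifting property with respect to acyclic fibrations, i.e.\ $f$ is a cofibration by Definition \ref{cmccurved}. So in both directions the genuine content is to identify ``weak equivalence'' with the lifting property and to produce the lifts.

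For the direction that a map $f\colon A\to B$ in $\mathrm{LLP}(\mathrm{Fib})$ is a weak equivalence, I would argue by the curvature types of $A$ and $B$. If both are gencurved, $f$ is a weak equivalence by definition. If both are uncurved, then for every fibration $p\colon X\to Y$ and every square with $f$ on the left, the maps out of the uncurved algebras $A$ and $B$ force $X$ and $Y$ to be uncurved, so the whole square lies in $\mathcal{L}$; hence $f$ lies in $\mathrm{LLP}$ of all surjections of $\mathcal{L}$, which by the model structure on $\mathcal{L}$ makes $f$ an acyclic cofibration, in particular a weak equivalence. The only remaining possibility is $A$ gencurved and $B$ uncurved, and the key step is to exclude it: I would test $f$ against the projection $p\colon B\times A\to B$, which is surjective and hence a fibration, via the square whose top map is $(f,1_A)\colon A\to B\times A$ and whose bottom map is $1_B$. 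A lift $h=(h_1,h_2)\colon B\to B\times A$ would force $h_2 f=1_A$, so $h_2\colon B\to A$ would be a morphism from an uncurved algebra to a gencurved one, which does not exist. Thus no such $f$ lies in $\mathrm{LLP}(\mathrm{Fib})$, and every map in $\mathrm{LLP}(\mathrm{Fib})$ is an acyclic cofibration.

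For the converse, let $f\colon A\to B$ be an acyclic cofibration, and again split on curvature type. When $A,B$ are uncurved, $f$ is a weak equivalence in $\mathcal{L}$ and, because acyclic fibrations of $\mathcal{L}$ are acyclic fibrations of $\mathcal{G}$, also a cofibration of $\mathcal{L}$, hence an acyclic cofibration of $\mathcal{L}$; since any $\mathcal{G}$-square testing $f$ against a fibration automatically lies in $\mathcal{L}$, the required lifts come from the model structure on $\mathcal{L}$. When $A,B$ are gencurved, given a fibration $p\colon X\to Y$ and a square with data $u\colon A\to X$ and $v\colon B\to Y$, I would pass to the pullback $P=X\times_Y B$ with its projection $\pi\colon P\to B$. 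By the construction of limits in $\mathcal{G}$ its curvature is $(w_X,w_B)$, which is nonzero since $w_B\neq 0$; hence $P$ is gencurved and $\pi$, being a surjection between gencurved algebras, is an acyclic fibration. The cofibration $f$ then lifts against $\pi$, and composing the lift with the projection $P\to X$ yields the desired lift of the original square.

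The main obstacle is the mixed case $A$ gencurved, $B$ uncurved in the first direction: unlike the other cases it reduces neither to $\mathcal{L}$ nor to an automatic weak equivalence, so one must exhibit an explicit fibration detecting the impossibility. The product fibration $B\times A\to B$ does this cleanly precisely because the nonexistence of morphisms from uncurved to gencurved algebras obstructs the needed retraction-type lift; the same structural fact is what makes the pullback $P$ gencurved in the converse, so both halves ultimately rest on this single feature of $\mathcal{G}$.
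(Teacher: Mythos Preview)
Your proof is correct and follows essentially the same strategy as the paper: both split by curvature type, both rule out the mixed case by testing against the projection $A\times B\to B$, and both handle the gencurved forward direction by enlarging the target so that the fibration becomes acyclic. The only cosmetic difference is that you use the pullback $X\times_Y B$ where the paper uses the full product $X\times B$; either works, since in both cases the curvature has nonzero $B$-component and the resulting map to $B$ is a surjection between gencurved algebras.
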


\begin{proof}
First we will show that acyclic cofibrations have the left lifting property for all fibrations.  Let $f\colon A \to B$ be an acyclic cofibration.  If $A,B$ are uncurved then $f$ has the required lifting property for all fibrations between uncurved Lie algebras and vacuously for all other fibrations.

We may then assume that $A,B$ are gencurved.  Suppose we have a commutative square in $\mathcal{G}$:
$$\xymatrix{ A\ar[d]_{f}\ar[r]^u & X\ar[d]^{g} \\
 B \ar[r]_v&  Y
}$$
where $g$ is surjective.  We then have the following commutative square:
$$\xymatrix{ A\ar[d]_{f}\ar[r]^{(u,f)} & X\times B \ar[d]^{g \times 1_B} \\
 B \ar[r]_{(v,1_B)}&  Y \times B
}$$
The right hand map is still surjective, but is now acyclic as it is a map between gencurved Lie algebras.  As $f$ is a cofibration, we then have a lifting $(h,1_B)\colon B \to X \times B$ making the diagram commute.  In particular the following diagram commutes:

$$\xymatrix{ A\ar[d]_{f}\ar[r]^u & X\ar[d]^{g} \\
 B \ar@{.>}[ur]^{h} \ar[r]_v&  Y
}$$

Hence $f$ satisfies the required lifting property.  Conversely suppose that $f\colon A \to B$ satisfies the left lifting property with respect to all fibrations.  In particular it has the left lifting property for all acyclic fibrations, hence $f$ is a cofibration.  It remains to show that $f$ is acyclic.

If $A,B$ are both gencurved then clearly $f$ is acyclic.  If $A,B$ are both uncurved then as $f$ satisfies the left lifting property for all fibrations in $\mathcal{L}$, we know that $f$ is acyclic.  It will then suffice to rule out the possibility that $A$ is gencurved and $B$ is uncurved.

Suppose that $A$ is gencurved and consider the following commutative square in $\mathcal{G}$:
$$\xymatrix{ A\ar[d]_{f}\ar[r]^{(1_A,f)} & A\times B \ar[d]\\
 B \ar[r]_{1_B}&  B
}$$
where the right hand morphism is projection onto $B$ and hence a fibration.  We have a lifting $(h,1_B)\colon B \to A \times B$ making the diagram commute.  In particular $hf=1_A$ so $h(w_B)=w_a \neq 0$.  Thus $w_B \neq 0$ and $B$ is not uncurved.
\end{proof}

\begin{corollary}\label{pushcof}
Acyclic cofibrations are closed under pushouts.
\end{corollary}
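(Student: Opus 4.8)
The plan is to deduce this corollary directly from Lemma \ref{acyccofllp}, which characterizes acyclic cofibrations as exactly those maps possessing the left lifting property (LLP) with respect to \emph{all} fibrations. Once that characterization is in hand, the statement reduces to the purely formal fact that any class of maps defined by an LLP against a fixed class is automatically stable under pushouts. The only ingredient specific to $\mathcal{G}$ is that the relevant pushouts exist, which follows from the closure of $\mathcal{G}$ under small colimits established above.

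Concretely, I would begin with an acyclic cofibration $f\colon A \to B$ and form its pushout along an arbitrary map $A \to C$, obtaining a square
$$
\xymatrix{
A \ar[d]_f \ar[r] & C \ar[d]^g \\
B \ar[r] & D
}
$$
with $D = B \sqcup_A C$. To show that the pushout $g\colon C \to D$ is again an acyclic cofibration, by Lemma \ref{acyccofllp} it suffices to verify that $g$ has the LLP with respect to every fibration $p\colon X \to Y$. So I would take a commutative square with $g$ on the left and such a $p$ on the right, precompose its top edge $C \to X$ with the structure map $A \to C$ to produce an outer square whose left edge is $f$, and then invoke the LLP of $f$ against $p$ to extract a diagonal filler $\ell\colon B \to X$.

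The crux is to assemble the original map $C \to X$ and the new map $\ell\colon B \to X$ into a single morphism $D \to X$ via the universal property of the pushout. This is legitimate precisely because the two maps agree after restriction along $A$: both restrict to the common composite $A \to C \to X$, the equality $\ell\circ f = (A\to C\to X)$ being exactly the upper-triangle condition satisfied by the lift $\ell$. The resulting map $D \to X$ is then checked to be the desired filler. That its composite with $g$ recovers the top edge $C \to X$ is immediate from the universal property; that its composite with $p$ agrees with the bottom edge $D \to Y$ is verified by restricting both along the two cone legs $C \to D$ and $B \to D$ and invoking the uniqueness part of the universal property once more.

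I do not expect any genuine obstacle here: the argument is entirely formal and the heavy lifting is done by Lemma \ref{acyccofllp}. The only point requiring mild care is the bookkeeping in the two universal-property verifications—confirming simultaneously that the induced map fills the inner square and is compatible with $p$—but these are routine diagram chases rather than real difficulties.
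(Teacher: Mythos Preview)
Your proposal is correct and follows exactly the same approach as the paper: the paper's proof is the single sentence that any class of morphisms defined by a left lifting property against a fixed class is closed under pushouts, invoking Lemma~\ref{acyccofllp}. You have simply spelled out the standard diagram chase behind that sentence in more detail.
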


\begin{proof}
Any class of morphisms which is defined by having the left lifting property with respect to a class of morphisms will be closed under pushouts.
\end{proof}

\begin{corollary}
Given a cofibration $f\colon A \to B$ between gencurved Lie algebras, we have that the induced map $f_w\colon A/\langle w_A \rangle \to B/\langle w_B \rangle$ is an acyclic cofibration in $\mathcal{L}$.
\end{corollary}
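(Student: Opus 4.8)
The plan is to exhibit $f_w$ as a pushout of $f$ in $\mathcal{G}$ and then invoke Corollary \ref{pushcof}. First I would record that, since $f\colon A\to B$ is a morphism of curved Lie algebras, it sends $w_A$ to $w_B$ and hence carries $\langle w_A\rangle$ into $\langle w_B\rangle$; this is exactly what makes the descended map $f_w\colon A/\langle w_A\rangle\to B/\langle w_B\rangle$ well defined. Both source and target are uncurved, since the image of the curvature vanishes and so $d^2=0$ on each quotient; thus $f_w$ is genuinely a morphism in $\mathcal{L}$. (Here one uses that $\langle w_A\rangle$ is a \emph{differential} ideal, which follows from the Bianchi identity $dw_A=0$, so that the quotient is an honest dg Lie algebra.)

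Next I would bring in the quotient morphisms $q_A\colon A\to A/\langle w_A\rangle$ and $q_B\colon B\to B/\langle w_B\rangle$, each of which is a legitimate map in $\mathcal{G}$ from a gencurved object to an uncurved one, sending the curvature to $0$. The central claim is that the square
\[
\xymatrix{
A \ar[d]_f \ar[r]^{q_A} & A/\langle w_A\rangle \ar[d]^{f_w} \\
B \ar[r]_{q_B} & B/\langle w_B\rangle
}
\]
is a pushout in $\mathcal{G}$. To see this I would check the universal property directly: a cocone under $f$ and $q_A$ with apex $Q$ is a morphism $\beta\colon B\to Q$ whose composite $\beta f$ kills $\langle w_A\rangle$, i.e.\ which satisfies $\beta(w_B)=\beta(f(w_A))=0$; such $\beta$ are precisely the morphisms that factor through $B/\langle w_B\rangle$. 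Hence the pushout is $B/\langle w_B\rangle$, with the coprojection from $A/\langle w_A\rangle$ being exactly $f_w$.

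With the pushout in hand the rest is formal. I would observe that $f$, being a morphism between two gencurved Lie algebras, is automatically a weak equivalence by Definition \ref{cmccurved}, and it is a cofibration by hypothesis, so $f$ is an acyclic cofibration in $\mathcal{G}$. Applying Corollary \ref{pushcof} to the pushout square then shows that $f_w$ is an acyclic cofibration in $\mathcal{G}$. Finally, since both $A/\langle w_A\rangle$ and $B/\langle w_B\rangle$ are uncurved, $f_w$ lies in $\mathcal{L}$, and the fact recorded after Definition \ref{cmccurved} that each of the three distinguished classes of $\mathcal{G}$ restricts on $\mathcal{L}$ to the corresponding class of \cite{Quil} lets me conclude that $f_w$ is an acyclic cofibration in $\mathcal{L}$.

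The main obstacle is the pushout identification in the second step; everything before and after it is bookkeeping. The only subtlety I would flag while carrying it out is the differential-ideal point noted above, needed to guarantee that the quotients are objects of $\mathcal{L}$ and that $q_A,q_B$ are morphisms of curved Lie algebras in the first place.
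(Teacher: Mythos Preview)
Your proof is correct and takes a somewhat different route from the paper's. The paper argues directly at the level of lifting problems: given a square with $f_w$ on the left and an arbitrary fibration $g$ in $\mathcal{L}$ on the right, it precomposes with the quotient maps $A\to A/\langle w_A\rangle$ and $B\to B/\langle w_B\rangle$ to obtain a square with $f$ on the left, lifts using that $f$ (being a cofibration between gencurved objects) is an acyclic cofibration, and then notes that the resulting lift $B\to X$ must factor through $B/\langle w_B\rangle$ because $X$ is uncurved. You instead recognise that this last factorisation is exactly the universal property of the pushout $B\sqcup_A A/\langle w_A\rangle \cong B/\langle w_B\rangle$, and then invoke Corollary~\ref{pushcof} once. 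Both arguments rest on the same two facts---that $f$ is automatically acyclic and that any $\mathcal{G}$-morphism from $B$ to an uncurved target kills $w_B$---but your packaging is more categorical and arguably makes better use of Corollary~\ref{pushcof}, which the paper had just proved. One minor point: when you describe a cocone as ``a morphism $\beta\colon B\to Q$ whose composite $\beta f$ kills $\langle w_A\rangle$'', you are implicitly using that $q_A$ is surjective so that the other leg $\alpha$ is determined by $\beta$; it is worth saying this, and also that the existence of $\alpha$ forces $Q$ to be uncurved, which is what makes $\beta(w_B)=w_Q=0$.
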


\begin{proof}
Suppose we have a commutative square:
$$\xymatrix{ A/\langle w_A\rangle\ar[d]_{f_w}\ar[r] & X \ar[d]^g\\
B/\langle w_B\rangle \ar[r]&  Y
}$$
where $g$ is a fibration in $\mathcal{L}$.

We may extend the diagram in the following way so that $f$ having the left lifting property with respect to $g$ yields a lifting which necessarily factors through a map $h\colon B/\langle w_B \rangle\to X$:
$$\xymatrix{ A\ar[d]_f\ar[r]&A/\langle w_A\rangle\ar[d]_{f_w}\ar[r] & X \ar[d]^g\\
B\ar[r] &B/\langle w_B\rangle\ar@{.>}[ur]^{h} \ar[r]&  Y
}$$
As the quotient maps $A \to A/\langle w_A \rangle$, $B \to B/\langle w_B \rangle$ are surjective we have that the above diagram commutes.  Thus $f_w$ has the left lifting property for all fibrations in $\mathcal{L}$, as required.
\end{proof}

In order to show that we have a CMC structure on $\mathcal{G}$ it remains to show that we have the required factorizations.

Let $B$ be a curved Lie algebra and let $B_\_$ denote its homogeneous elements.  We define $$F_B=\langle w, u_b,v_b \vert \, b \in B_\_ \rangle,$$ to be the free curved Lie algebra on $B$.  In particular $w$ is the curvature of $F_B$ and the only relations imposed on the generators are that for all $b\in B$ we have $du_b=v_b$ and $dv_b=[w,u_b]$.  The elements of $F_B$ are graded so that we have the natural map $m_B \colon F_B \to B$ sending: \begin{eqnarray*} u_b &\mapsto& b,\\v_b &\mapsto& db,\\w &\mapsto& w_B\end{eqnarray*} for each $b \in B_\_$.

\begin{lemma} \label{wtoFB}
The inclusion $\langle w \rangle\to F_B$ is an acyclic cofibration.
\end{lemma}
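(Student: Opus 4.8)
The plan is to invoke the characterization of acyclic cofibrations established in Lemma \ref{acyccofllp}: a map is an acyclic cofibration precisely when it has the left lifting property with respect to all fibrations, which here means all surjective morphisms of curved Lie algebras. Using this characterization lets me avoid verifying the cofibration and weak-equivalence conditions separately; it also reflects the fact that, since both $\langle w \rangle$ and $F_B$ are genuinely curved, the inclusion is automatically a weak equivalence. So I would reduce the entire statement to the following: for every commutative square with the inclusion $\langle w \rangle \to F_B$ on the left and an arbitrary surjection on the right, produce a diagonal filler.

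The ingredient I would isolate first is the universal property of $F_B$. The claim is that a morphism of curved Lie algebras $F_B \to X$ is the same datum as a choice, for each homogeneous element $b$ of $B$, of an element $U_b \in X$ of degree $\deg b$. Indeed, the curvature $w$ is forced to map to the curvature $w_X$; the relation $du_b = v_b$ forces $v_b \mapsto dU_b$; and the remaining relation $dv_b = [w,u_b]$ translates into $d^2 U_b = [w_X, U_b]$, which holds automatically because $X$ is a curved Lie algebra. Thus $F_B$ is free on the generators $u_b$, and no compatibility conditions remain beyond matching degrees.

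With this in hand the lifting is immediate. Suppose given a square whose top map is $\alpha \colon \langle w \rangle \to X$ (necessarily $w \mapsto w_X$), whose bottom map is $\beta \colon F_B \to Y$ (say $u_b \mapsto \bar{U}_b$), and whose right map is a surjection $g \colon X \to Y$. Since $g$ respects the grading, it is surjective in each degree, so for every $b$ I may choose a homogeneous $U_b \in X$ of degree $\deg b$ with $g(U_b) = \bar{U}_b$. By the universal property these choices assemble into a morphism $h \colon F_B \to X$ with $h(u_b) = U_b$ and $h(w) = w_X$. The upper triangle commutes because it need only be checked on the generator $w$, and the lower triangle commutes because $g \circ h$ and $\beta$ agree on the generators $u_b$ and on $w$.

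The calculation is routine once the universal property is in place, so the step I would treat as the crux is precisely the verification that the defining relations of $F_B$ impose no genuine constraint on a morphism out of $F_B$ — that is, that an arbitrary degreewise lift of the generators $u_b$ really does extend to a morphism of curved Lie algebras. This is exactly where the curvature equation $d^2 = [w, -]$ in the target is used, and it is what makes $F_B$ play the role of a free (cofibrant) object over $B$ relative to the initial object $\langle w \rangle$.
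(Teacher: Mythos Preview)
Your proof is correct and follows essentially the same route as the paper: both arguments verify the left lifting property against an arbitrary surjection by choosing degreewise preimages of the generators $u_b$ and sending $v_b$ to their differentials, the paper's proof being a terser version of yours. Your explicit invocation of Lemma~\ref{acyccofllp} and your remark that the relation $dv_b=[w,u_b]$ is automatically satisfied in the target (because $d^2=[w_X,-]$ there) make transparent the steps the paper leaves implicit.
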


\begin{proof}
Given a commutative square:
$$\xymatrix{\langle w \rangle \ar[d]\ar[r]^{j} & X \ar[d]^g\\
 F_B \ar[r]_k& Y
}$$
with $g$ a surjective map, we may define a lift $h\colon F_B \to X$ making the diagram commute as follows:

\bigskip
--Map $w$ to $w_X$.

--Map each $u_b$ to a preimage $x_b$ (of the appropiate degree) under $g$ of $k(u_b)$.

--Map each $v_b$ to $dx_b$.
\end{proof}

\begin{lemma}\label{killw}
For any curved Lie algebra $A$, the quotient map $A\to A/\langle w_A\rangle$ is a cofibration.
\end{lemma}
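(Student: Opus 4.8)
The plan is to verify Lemma \ref{killw} directly from Definition \ref{cmccurved}: since cofibrations are exactly the maps with the left lifting property against acyclic fibrations, it suffices to solve an arbitrary lifting problem of $q\colon A \to A/\langle w_A\rangle$ against an acyclic fibration. The guiding observation is that killing the curvature lands us in the uncurved world, and the geography of $\mathcal{G}$ (no maps from an uncurved to a gencurved algebra, and weak equivalences occurring only within each of the two strata) will force the whole lifting problem to collapse into $\mathcal{L}$.

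First I would dispose of the trivial case: if $A$ is uncurved then $w_A=0$, so $\langle w_A\rangle=0$ and $q$ is the identity, which is a cofibration. Hence I may assume $A$ is gencurved, and note that $A/\langle w_A\rangle$ has curvature equal to the image of $w_A$, namely $0$, so it is uncurved. Now consider a lifting problem
$$\xymatrix{ A \ar[d]_q \ar[r]^u & X \ar[d]^g \\ A/\langle w_A\rangle \ar[r]_v \ar@{.>}[ur] & Y }$$
with $g$ an acyclic fibration. The key step is a curvature count. Since $A/\langle w_A\rangle$ is uncurved and maps to $Y$, and there are no morphisms from an uncurved to a gencurved Lie algebra, $Y$ must be uncurved. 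Moreover a surjection from a gencurved to an uncurved Lie algebra is not a weak equivalence, being neither a weak equivalence in $\mathcal{L}$ (its source is not uncurved) nor a map between gencurved algebras (its target is not gencurved), so it cannot be an acyclic fibration; therefore $X$ is uncurved as well.

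With $X$ uncurved and $A$ gencurved, the map $u$ sends the curvature $w_A$ to $w_X=0$. As $\ker u$ is a dg ideal containing $w_A$, it contains $\langle w_A\rangle$, so $u$ factors through $q$ as a morphism $\bar u\colon A/\langle w_A\rangle \to X$ of uncurved Lie algebras. This $\bar u$ is the desired lift: $\bar u q=u$ by construction, and since $g\bar u q=g u=v q$ with $q$ surjective, hence epi, we conclude $g\bar u=v$. Thus $q$ has the left lifting property against every acyclic fibration and is a cofibration.

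I expect the only real content to be the curvature bookkeeping in the middle step: recognizing that the presence of the uncurved object $A/\langle w_A\rangle$ in the lower-left corner, combined with the fact that acyclic fibrations live within a single stratum, forces both $X$ and $Y$ to be uncurved and hence forces $u$ to annihilate $w_A$. Once this is established, the factorization of $u$ through $q$ and the verification of the lift are purely formal.
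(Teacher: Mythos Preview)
Your proof is correct and follows essentially the same route as the paper's. The paper's version is terser---it simply asserts that $q$ has the left lifting property for all maps between uncurved Lie algebras and vacuously for all maps between gencurved ones, then observes that every acyclic fibration falls into one of these two classes---but the underlying reasoning (the curvature bookkeeping forcing $X$ and $Y$ to be uncurved, and the resulting factorization of $u$ through $q$) is identical to what you have spelled out.
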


\begin{proof}
The map $A\to A/\langle w_A\rangle$ has the left lifting property for all maps between uncurved Lie algebras and (vacuously) for all maps between gencurved Lie algebras.  In particular it has the left lifting property for all acyclic fibrations as required.
\end{proof}

\begin{lemma} \label{factorliealgebramaps}
Let $f\colon A \to B$ be a map of curved Lie algebras.  Then $f$ may be factorised as{\rm :}

\noindent i) an acyclic cofibration followed by a fibration,

\noindent ii) a cofibration followed by an acyclic fibration.
\end{lemma}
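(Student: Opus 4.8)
The plan is to handle the two factorizations with different tools, organizing everything around the curvature type of $f$. Since a morphism sends $w_A$ to $w_B$ while sending $0$ to $0$, a morphism out of an uncurved algebra must have uncurved target; hence only three cases occur: $A,B$ both uncurved (\emph{UU}), both gencurved (\emph{GG}), or $A$ gencurved and $B$ uncurved (\emph{GU}). Throughout I write $P$ for the pushout of the inclusion $\langle w\rangle\to F_B$ along the unique morphism $\langle w\rangle\to A$ out of the initial object, so that $P$ is the coproduct $A\amalg F_B$ in $\mathcal G$ and comes with coprojections from $A$ and from $F_B$.

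For factorization (i) a single construction suffices in all three cases. The coprojection $A\to P$ is the cobase change of $\langle w\rangle\to F_B$, which is an acyclic cofibration by Lemma \ref{wtoFB}; since acyclic cofibrations are stable under pushout (Corollary \ref{pushcof}), $A\to P$ is an acyclic cofibration. The composites $\langle w\rangle\to A\xrightarrow{f}B$ and $\langle w\rangle\to F_B\xrightarrow{m_B}B$ agree, both being the unique morphism $\langle w\rangle\to B$, so they induce a morphism $P\to B$ restricting to $m_B$ on $F_B$; as $m_B$ is surjective this morphism is surjective, hence a fibration. Thus $f=(P\to B)\circ(A\to P)$ is an acyclic cofibration followed by a fibration, with no case analysis required.

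For factorization (ii) I argue by cases. In \emph{UU} I factor $f$ inside $\mathcal L$ as a cofibration followed by an acyclic fibration, using that the three distinguished classes of $\mathcal G$ restrict to the corresponding classes of $\mathcal L$; the middle object is uncurved and the factorization transports to $\mathcal G$. In \emph{GG} I reuse $P$: the morphism $P\to B$ sends $w_P$ to $w_B\neq 0$, so $w_P\neq 0$ and $P$ is gencurved; therefore $P\to B$ is a surjection between gencurved algebras, i.e. an acyclic fibration, while $A\to P$ is an acyclic cofibration and in particular a cofibration. In \emph{GU} the target curvature vanishes, so $f(w_A)=w_B=0$ and $f$ annihilates the ideal $\langle w_A\rangle$; hence $f$ factors as $A\to A/\langle w_A\rangle\xrightarrow{\bar f}B$, where $A\to A/\langle w_A\rangle$ is a cofibration by Lemma \ref{killw} and $A/\langle w_A\rangle$ is uncurved. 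Factoring $\bar f$ in $\mathcal L$ as a cofibration $A/\langle w_A\rangle\to C$ followed by an acyclic fibration $C\to B$, and composing the two cofibrations (again a cofibration), yields the required factorization of $f$.

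The main obstacle is the \emph{GU} case of (ii). An acyclic fibration is in particular a weak equivalence, and the only weak equivalences with uncurved target are those of $\mathcal L$, which forces the middle object of any such factorization to be uncurved; so the gencurved algebra $A$ must first be pushed into the uncurved world. The observation that makes this possible is that $\bar f$ exists at all, i.e. that $f$ automatically kills $\langle w_A\rangle$ because $w_B=0$, so Lemma \ref{killw} supplies the opening cofibration $A\to A/\langle w_A\rangle$ before the problem is handed to the CMC on $\mathcal L$. A secondary point to check carefully is the compatibility of the distinguished classes of $\mathcal L$ with those of $\mathcal G$, so that an $\mathcal L$-factorization may legitimately be read as a $\mathcal G$-factorization.
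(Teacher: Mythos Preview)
Your proof is correct and follows essentially the same route as the paper: the same pushout $P=F_B\star_{\langle w\rangle}A$ gives factorization (i) in all cases, and factorization (ii) is handled by the identical three-case split (reusing $P$ when both are gencurved, quoting $\mathcal L$ when both are uncurved, and passing through $A/\langle w_A\rangle$ in the mixed case). Your explicit verification that $P$ is gencurved in the GG case, via $w_P\mapsto w_B\neq 0$, is a small point the paper leaves implicit.
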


\begin{proof}
The maps $f$ and $m_B$ induce a map $j\colon F_B {\star_{\tiny{\langle w \rangle}} A} \to B$ on the pushout $F_B {\star_{\langle w \rangle}} A$ making the following diagram commute:
$$\xymatrix{\langle w \rangle \ar[d]\ar[r] & A\ar@/_-1pc/[ddr]^f \ar[d]^i\\
 F_B\ar@/_1pc/[drr]_{m_B} \ar[r]& F_B {\star_{\langle w \rangle}} A \ar@{.>}[dr]^j \\
& & B
}$$
By Lemma \ref{wtoFB} the left hand vertical map is an acyclic cofibration and by Corollary \ref{pushcof}, its pushout $i$ is also an acyclic cofibraion.  The induced map $j$ is clearly surjective, hence a fibration.  Thus $f=ji$ gives us the first factorization (i).

If $A,B$ are gencurved, then the first factorization is also the second factorization (ii), as all maps between gencurved Lie algebras are weak equivalences.  If $A,B$ are uncurved then the second factorization follows from $\mathcal{L}$ being a CMC.  Finally we consider the case where $A$ is gencurved and $B$ is uncurved.

In this case the map $f$ factorizes through the quotient map $A \to A/\langle w \rangle$, which by Lemma \ref{killw} is a cofibraion.  The map $A/\langle w \rangle \to B$ lies in $\mathcal{L}$ and  hence factorizes through some uncurved Lie algebra $C$, as a cofibraion $i$ followed by an acyclic fibration $j$.  Thus we obtain the second factorization (ii) of $f$:
$$\xymatrix{A \ar[r]& A/\langle w \rangle\ar[r]^i&C\ar[r]^j& B}$$
\end{proof}

Thus $\mathcal{G}$ contains limits and colimits and has a closed model structure.  We conclude:

\begin{theorem}
The category $\mathcal{G}$  is a CMC.
\end{theorem}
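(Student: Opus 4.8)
The plan is to verify the closed model category axioms MC1--MC5 of \cite{Dwye} one at a time, observing that each has already been supplied by one of the preceding lemmas, so that the proof is essentially an assembly.

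First I would dispose of MC1, the existence of all small limits and colimits, which is precisely the content of the two lemmas establishing that $\mathcal{G}$ is closed under small limits and under small colimits. Axioms MC2 (two-out-of-three for weak equivalences) and MC3 (closure of all three distinguished classes under retracts) were already recorded in the remarks immediately following Definition \ref{cmccurved}, where it was observed that fibrations, cofibrations and weak equivalences each contain the identity maps and are closed under composition and retraction, and that weak equivalences obey two-out-of-three.

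I would then turn to the lifting axiom MC4. Its first half, that cofibrations have the left lifting property against acyclic fibrations, holds by fiat: cofibrations were \emph{defined} in Definition \ref{cmccurved} to be exactly the maps with this property. Its second half, that acyclic cofibrations have the left lifting property against all fibrations, is Lemma \ref{acyccofllp}, which characterises the acyclic cofibrations as precisely the maps possessing the left lifting property against every fibration. Finally, the factorization axiom MC5, in both the (cofibration, acyclic fibration) and (acyclic cofibration, fibration) forms, is Lemma \ref{factorliealgebramaps}. Quoting MC1 through MC5 then yields the result.

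Because all of the analytic work is carried out in the lemmas, the theorem itself presents no real obstacle; the one point that genuinely must hold is that the definition of \emph{cofibration} in Definition \ref{cmccurved} (a lifting property against acyclic fibrations) be compatible with the notion of \emph{acyclic cofibration} appearing in MC4 and MC5 (a cofibration that is also a weak equivalence), and this compatibility is exactly what Lemma \ref{acyccofllp} secures. The delicate ingredient underneath that lemma---and the place where the whole construction could have failed---is the asymmetry between the gencurved and uncurved subcategories, in particular the fact that no acyclic cofibration can run from a gencurved to an uncurved Lie algebra. I expect this to be the substantive point, but since it is already resolved inside Lemma \ref{acyccofllp}, the theorem follows formally.
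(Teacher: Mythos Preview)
Your proposal is correct and matches the paper's approach exactly: the theorem is stated without a separate proof, as it is simply the assembly of the preceding lemmas establishing limits, colimits, and the CMC axioms. Your identification of which lemma handles which axiom (in particular that MC4(i) holds by the definition of cofibration and MC4(ii) by Lemma~\ref{acyccofllp}, with MC5 given by Lemma~\ref{factorliealgebramaps}) is precisely how the paper organises the argument.
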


We can give a `generating set' (in the sense of Lemma \ref{gencofG} below) for the cofibrations of $\mathcal{G}$.  Let $I$ be the set of morphisms in $\mathcal{G}$ of the following four types:

\bigskip
--(i) Inclusions $\langle w \rangle \to \langle w,u,v \rangle$ where $du=v$, $w$ is the curvature element and $u$ may be of any degree.

\bigskip
--(ii) Inclusions $\langle v \rangle \to \langle u,v \rangle$ where $dv=0$, $du=v$, 0 is the curvature element and $u$ may be of any degree.

\bigskip
--(iii) Inclusions $0 \to \langle u \rangle$, where $du=0$, $0$ is the curvature element and $u$ may be of any degree.

\bigskip
--(iv) The map $\langle w \rangle \to 0$, where $w$ is the curvature element.

\bigskip
From Lemma \ref{wtoFB} we know that type (i) morphisms in $I$ are cofibrations.  From Lemma \ref{killw} we know that the type (iv) morphism in $I$ is a cofibration.  Also we are given that type (ii) and (iii) morphisms in $I$ are cofibrations in $\mathcal{L}$ \cite{Quil}.  Thus the morphisms in $I$ are cofibrations.  In fact they generate all cofibrations in $\mathcal{G}$ in the following sense:

\begin{lemma}\label{gencofG}
The maps satisfying the right lifting property with respect to $I$ are precisely the acyclic fibrations of $\mathcal{G}$.
\end{lemma}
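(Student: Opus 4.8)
The plan is to establish the two inclusions of the asserted equality of classes of morphisms. The reverse inclusion is immediate from what has just been observed: every morphism in $I$ is a cofibration, and cofibrations have the left lifting property with respect to acyclic fibrations by Definition \ref{cmccurved}; equivalently, acyclic fibrations have the right lifting property with respect to every cofibration, and in particular with respect to the four types comprising $I$. So the substantive part is the forward inclusion: I must show that if $g\colon X \to Y$ has the right lifting property with respect to $I$, then $g$ is surjective and a weak equivalence.

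First I would settle the curvature dichotomy. Since any morphism preserves curvature, $X$ uncurved forces $Y$ uncurved, and there are no morphisms from an uncurved Lie algebra to a gencurved one; hence the only configurations left to examine are ($X,Y$ both uncurved), ($X,Y$ both gencurved), and the potentially troublesome case ($X$ gencurved, $Y$ uncurved). I would eliminate the last using the type (iv) map $\langle w \rangle \to 0$. Form the square whose top edge is the unique map $\langle w \rangle \to X$ (sending $w \mapsto w_X$, as $\langle w \rangle$ is initial) and whose bottom edge is the zero map $0 \to Y$, which exists precisely because $Y$ is uncurved; the square commutes since both composites send $w \mapsto w_Y = 0$. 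Any lift $0 \to X$ would force $w_X = 0$, contradicting $X$ gencurved. Thus the right lifting property with respect to $I$ already guarantees that $X$ and $Y$ are either both uncurved or both gencurved.

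Next I would dispatch the two surviving cases. If $X$ and $Y$ are both gencurved, then $g$ is automatically a weak equivalence, so only surjectivity remains. For a homogeneous $y \in Y$ I would construct the type (i) map $\langle w,u,v \rangle \to Y$ sending $u \mapsto y$, $v \mapsto dy$, $w \mapsto w_Y$; this is a morphism of curved Lie algebras because $du = v$ maps to $dy$ and $dv = [w,u]$ maps to $[w_Y,y] = d^2 y$, using the defining identity $d^2 = [w_Y,-]$ in $Y$. Paired with the initial map $\langle w \rangle \to X$, the lift $h\colon \langle w,u,v \rangle \to X$ produces a preimage $h(u)$ of $y$, so $g$ is surjective. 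If instead $X$ and $Y$ are both uncurved, then the lifting problems against types (ii) and (iii) take place entirely inside $\mathcal{L}$, since those generators carry zero curvature and all the algebras involved are uncurved; as types (ii) and (iii) are exactly Quillen's generating cofibrations for dg Lie algebras, $g$ has the right lifting property with respect to them precisely when it is an acyclic fibration in $\mathcal{L}$ \cite{Quil}, which is a surjective weak equivalence in $\mathcal{G}$.

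The main obstacle is orchestrating the case analysis so that the single set $I$ simultaneously detects surjectivity in the curved regime (type (i)), forbids the mixed gencurved-to-uncurved configuration (type (iv)), and reduces to the known characterization of acyclic fibrations in $\mathcal{L}$ in the uncurved regime (types (ii) and (iii)). The one genuinely new verification is that the type (i) lifting problem is solvable, which rests on the fact that the curved relation $d^2 = [w,-]$ is exactly what makes the assignment $v \mapsto dy$ compatible with the defining relations of $\langle w,u,v \rangle$.
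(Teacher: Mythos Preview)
Your proof is correct and follows essentially the same approach as the paper's: the reverse inclusion by noting the elements of $I$ are cofibrations, then the same three-way case split using type (iv) to exclude the gencurved-to-uncurved configuration, types (ii)--(iii) to reduce the uncurved case to Quillen's result, and type (i) to establish surjectivity in the gencurved case. You provide more detail than the paper does (in particular, the explicit verification that the type (i) test map is well-defined and the explicit square for type (iv)), but the argument is the same.
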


\begin{proof}
As the morphisms of $I$ are cofibrations it is clear that acyclic fibrations will have the right lifting property with respect to them.  We must prove the converse.

Let $f\colon A \to B$ have the right lifting property with respect to $I$.  If $A,B$ are uncurved then for $f$ to have the right lifting property with respect to type (ii) and type (iii) morphisms in $I$, it must be an acyclic fibration in $\mathcal{L}$.

In order to have the right lifting property for the type (iv) morphism in $I$, $f$ cannot be a map from a gencurved algebra to an uncurved algebra.

Finally, if both $A,B$ are gencurved then $f$ is acyclic.  Further, having the right lifting property with respect to type (i) morphsms in $I$ implies that $f$ is surjective, hence a fibration as well as acyclic.
\end{proof}

A description of the cofibrations in $\mathcal{G}$ in terms of how they extend the cofibrations in $\mathcal{L}$ is given by the following:

\begin{lemma}
Every cofibration in $\mathcal{G}$ is a retract of a pushout of a cofibration of one of the following three types:

\bigskip
1) A cofibration in $\mathcal{L}$,

2) The composition of a quotient map $A\!\to\! A/\langle w_A\rangle$ with a cofibration in $\mathcal{L}$,

3) An inclusion of the form $\langle w \rangle\to F_X$ for some graded set $X$.

\end{lemma}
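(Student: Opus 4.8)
The plan is to deduce the statement from the generating set $I$ of Lemma \ref{gencofG} together with the small object argument and the retract argument. Since Lemma \ref{gencofG} identifies the maps with the right lifting property against $I$ as exactly the acyclic fibrations, and cofibrations are by Definition \ref{cmccurved} the maps with the left lifting property against acyclic fibrations, the retract argument shows that every cofibration $f\colon A\to B$ is a retract of a relative $I$-cell complex $j\colon A\to C$, i.e.\ a transfinite composition of pushouts of coproducts of the elementary maps (i)--(iv). (The domains $\langle w\rangle$, $\langle v\rangle$ and $0$ of the maps in $I$ are finitely generated, hence small, so the small object argument applies.) It then suffices to show that each such $j$ is, after reorganisation, a single cofibration of type 1, 2 or 3, and $f$ inherits the conclusion as a retract.

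The key structural input is that there are no morphisms from an uncurved to a gencurved Lie algebra. First, this forces $C$ to have the same curvature status as $B$: if $B$ is a retract of $C$ then a gencurved $B$ cannot be a retract of an uncurved $C$, and vice versa. Second, it organises the cells of $j$ into two phases. While the current stage is gencurved the only attachable cells are of type (i): a type (ii) or (iii) cell has uncurved source, which admits no map to a gencurved algebra, while a type (iv) cell kills the curvature. Once a (type (iv)) curvature-killing step has occurred the complex is uncurved and stays uncurved, after which every cell is a cofibration of $\mathcal L$ (types (ii), (iii), and a type (i) cell attached along $w\mapsto 0$, where the relation $dv=[w,u]$ degenerates to $dv=0$). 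Thus $j$ consists of a gencurved phase of type (i) cells, at most one effective curvature-killing step, and an uncurved phase of $\mathcal L$-cells.

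I would then split into three cases. If $A$ is uncurved then so is $B$, and since the three classes of Definition \ref{cmccurved} restrict to the corresponding classes of $\mathcal L$, the map $f$ is already a cofibration of $\mathcal L$ (type 1). If $A$ and $B$ are both gencurved then $C$ is gencurved, no type (iv) cell occurs, and $j$ is built entirely from type (i) cells; as these all have the \emph{initial} object $\langle w\rangle$ as source, their attaching maps are canonical and the whole transfinite composition collapses to a single pushout of the coproduct $\langle w\rangle\to F_X$ (type 3), with $X$ indexing the adjoined generators, which is a cofibration by Lemma \ref{wtoFB}. Finally, if $A$ is gencurved and $B$ uncurved then $C$ is uncurved, the curvature is killed, and I would front-load the type (i) cells past the curvature-killing step: killing $w$ first turns each adjoined pair into a free $\mathcal L$-generator, via the identification $(A\star_{\langle w\rangle}F_X)/\langle w\rangle\cong (A/\langle w_A\rangle)\star\langle u_x,v_x\rangle$ with $dv_x=[w,u_x]\mapsto 0$, so that $j$ factors as the quotient $A\to A/\langle w_A\rangle$ (a cofibration by Lemma \ref{killw}) followed by a relative $\mathcal L$-cell complex, i.e.\ a single cofibration of type 2.

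The main obstacle is exactly this last reorganisation in the mixed case: one must check that an interleaving of type (i) cells, the curvature-killing map and $\mathcal L$-cells can always be reordered into the clean shape ``type (i) phase, then kill $w$, then $\mathcal L$-phase'', and that the type (i) phase is then absorbed into the quotient $A\to A/\langle w_A\rangle$, leaving a genuine $\mathcal L$-cofibration out of $A/\langle w_A\rangle$. The ordering is forced by the absence of morphisms from uncurved to gencurved algebras (so $w$ is killed exactly once and the type (i) cells, having initial source, may be pushed before it without changing the colimit), and the absorption is the pushout computation already implicit in the analysis of $F_B\star_{\langle w\rangle}A$ in Lemma \ref{factorliealgebramaps}. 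A secondary point to verify is the collapse of the parallel type (i) cells into one pushout of $\langle w\rangle\to F_X$, which rests on the initiality of $\langle w\rangle$ in $\mathcal G$.
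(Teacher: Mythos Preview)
Your approach is correct but takes a considerably longer route than the paper. The paper's proof is three lines: the explicit factorizations already constructed in Lemma~\ref{factorliealgebramaps} show that \emph{every} morphism in $\mathcal G$ factors as (a pushout of) a cofibration of type 1, 2 or 3 followed by an acyclic fibration, and then the standard retract argument finishes. No small object argument and no cell reorganisation are needed, because the required factorisation has already been built by hand.

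Your argument goes instead through the generating set $I$ of Lemma~\ref{gencofG}, produces a relative $I$-cell complex via the small object argument, and then reorganises its cells into the two-phase form. This is sound: the initiality of $\langle w\rangle$ lets you front-load the type~(i) cells, the absence of maps from uncurved to gencurved algebras forces the phase structure, and the identification $(A\star_{\langle w\rangle}F_X)/\langle w\rangle\cong (A/\langle w_A\rangle)\star\langle u_x,v_x:du_x=v_x,\,dv_x=0\rangle$ turns the gencurved phase, after killing $w$, into an $\mathcal L$-cofibration out of $A/\langle w_A\rangle$. One small wording issue: the type~(i) cells are not ``absorbed'' into the quotient $A\to A/\langle w_A\rangle$; rather, after killing $w$ they become free $\mathcal L$-cells and are absorbed into the \emph{subsequent} $\mathcal L$-cofibration. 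What your approach buys is a proof that works directly from the generating cofibrations and would transfer verbatim to any analogous setting (e.g.\ curved associative algebras); what the paper's approach buys is brevity, since the case analysis you perform is exactly the case analysis already carried out in Lemma~\ref{factorliealgebramaps}.
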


\begin{proof}
In the proof of Lemma \ref{factorliealgebramaps} we show that every morphism in $\mathcal{G}$ factors as a cofibration of type 1) or 2) or a pushout of a cofibration of type 3), followed by an acyclic fibration.

In particular for any cofibration $f$, we have $f=\rho i$ where $\rho$ is an acyclic fibration and $i$ is a pushout of a cofibration of one of the three types.

Then by the standard diagram chase we have that $f$ is a retract of $i$.
\end{proof}

Having shown that $\mathcal{G}$ is a CMC,  we will now extend the contravariant functors in the Quillen equivalence between Hinich algebras and Lie algebras \cite{Hini}, to a Quillen equivalence: \begin{eqnarray}\label{curveq}
\mathcal{E}\stackrel {CE} \leftrightarrows_{L} \mathcal{G}. \label{junction}
\end{eqnarray}

Here $CE$ takes a curved Lie algebra, $\mathfrak{g}$ to its Chevalley--Eilenberg complex.  As an algebra this is the completed free graded commutative algebra on the dual of $\mathfrak{g}$ raised one degree: $\hat{S}(\Sigma\mathfrak{g})^*$.

The differential $d$ is induced via the Leibniz rule and continuity by its restriction to $(\Sigma \mathfrak{g})^*$.  This restriction is given by:$$d=d_0+d_1+d_2,$$
where:

\bigskip
$d_0\colon (\Sigma\mathfrak{g})^* \to k$ is given by evaluation on $w$ the curvature of $\mathfrak{g}$,

$d_1\colon(\Sigma\mathfrak{g})^* \to(\Sigma\mathfrak{g})^*$ is given by precomposition with the differential on $\mathfrak{g}$,

$d_2\colon(\Sigma\mathfrak{g})^* \to S^2(\Sigma\mathfrak{g})^*$ is given by precomposition with the Lie  bracket on $\mathfrak{g}$.

\bigskip
Note that on $(\Sigma\mathfrak{g})^*$ we have $d_0d_1=0$ as the curvature is closed, $d_1^2+d_0d_2=0$ by the defining property of the curvature and $d_1d_2+d_2d_1=0$ by the Leibniz rule.  It follows that $d^2=0$, as the remaining terms vanish.

The functor $L$ takes an algebra $A \in \mathcal{E}$ to the free Lie algebra on the topological dual of $A$ raised one degree: ${L}(\Sigma A)^*$.  Composition of the differential on $A$ with augmentation gives an element in $(\Sigma A)^*$, which we set as the curvature of $LA$.

\begin{theorem}\label{equivcurved}
The functors $L$ and $CE$ in (\ref{curveq}) determine a contravariant Quillen equivalence between the closed model categories of curved Lie algebras and the extended Hinich category.
\end{theorem}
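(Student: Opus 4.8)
The plan is to proceed in three stages: first establish the contravariant adjunction between $L$ and $CE$, then upgrade it to a Quillen adjunction, and finally prove that it is an equivalence by decomposing both categories along their dichotomies (Hinich versus acyclic on the one side, uncurved versus gencurved on the other). I would begin by constructing a natural isomorphism
\[
\mathrm{Hom}_{\mathcal{E}}(A, CE(\mathfrak{g})) \cong \mathrm{Hom}_{\mathcal{G}}(\mathfrak{g}, L(A)),
\]
natural in $A \in \mathcal{E}$ and $\mathfrak{g} \in \mathcal{G}$. Both sides should be identified with a set of Maurer--Cartan elements: a continuous algebra map $A \to \hat{S}(\Sigma\mathfrak{g})^*$ is the same datum as a Maurer--Cartan element $\tau$ of the curved Lie algebra $\mathfrak{g}\,\hat\otimes\, M_A$ (where $M_A$ is the maximal ideal), and this is in turn the same as a morphism of curved Lie algebras $\mathfrak{g} \to L(A)$. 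This is the curved, pseudo-compact refinement of the bar--cobar adjunction of \cite{Hini}; the only genuinely new bookkeeping is the term $d_0$ of the Chevalley--Eilenberg differential (evaluation on curvature) and the dual augmentation term defining $w_{L(A)}$. Along the way I would record that the functors respect the two dichotomies: by Lemma \ref{twotypes} and Definition \ref{cmccurved}, $CE$ carries uncurved Lie algebras to Hinich algebras (since $w=0$ forces $d_0=0$, so the maximal ideal is closed under $d$) and gencurved Lie algebras to acyclic algebras (since $w\neq 0$ makes the unit a boundary), while dually $L$ carries Hinich algebras to uncurved and acyclic algebras to gencurved Lie algebras.

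Next I would show the pair is a Quillen adjunction, that is, that $CE$ sends cofibrations in $\mathcal{G}$ to fibrations in $\mathcal{E}$ and acyclic cofibrations to acyclic fibrations (equivalently, that $L$ preserves the dual classes). Since $CE$ turns colimits into limits, it suffices to verify the condition on the generating cofibrations of Lemma \ref{gencofG}. For types (ii) and (iii), which lie in $\mathcal{L}$, this is exactly the Quillen property of Hinich's adjunction; the curvature generator of type (i) and the curvature-killing map of type (iv) are the new cases, and here the dichotomy matching from the first step guarantees that their $CE$-images are (acyclic) fibrations in the sense of Definition \ref{cmc}, surjectivity being a direct computation with the explicit free models $F_B$ and $\langle w\rangle$.

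Finally, to promote this to a Quillen equivalence I would use the decomposition of both sides. For cofibrant $\mathfrak{g}$ and fibrant $A$ (recall every object of $\mathcal{E}$ is fibrant) I must show that a morphism $\mathfrak{g}\to L(A)$ is a weak equivalence if and only if its adjunct $A\to CE(\mathfrak{g})$ is. By the dichotomy matching only two cases occur. When $\mathfrak{g}$ is uncurved and $A$ is a Hinich algebra this is precisely Hinich's Quillen equivalence \cite{Hini}, since the model structures on $\mathcal{E}$ and $\mathcal{G}$ restrict to the original ones on Hinich algebras and on $\mathcal{L}$. When $\mathfrak{g}$ is gencurved and $A$ is acyclic, every morphism on either side is a weak equivalence by Definition \ref{cmc} and Definition \ref{cmccurved}, so the biconditional holds automatically once we know the functors land in the correct subcategory, which is the content of the first step. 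Equivalently, one checks that the derived unit and counit are weak equivalences, reducing to \cite{Hini} on the uncurved/Hinich blocks and to triviality on the gencurved/acyclic blocks.

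The hard part will be the first step: setting up the adjunction honestly in the curved, pseudo-compact setting. One must verify that the Maurer--Cartan description is compatible with the inverse-limit topologies so that exactly the continuous algebra maps arise, that the completion $\hat{S}$ does not disturb naturality, and above all that the curvature data match precisely, namely that the augmentation-of-the-differential defining $w_{L(A)}$ is dual to the term $d_0$ of the Chevalley--Eilenberg differential. Once the adjunction and the clean correspondence of the two dichotomies are in place, the remaining steps reduce to \cite{Hini} together with the observation that both ``acyclic'' worlds consist entirely of weak equivalences.
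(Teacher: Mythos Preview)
Your overall architecture---match the two dichotomies, reduce the uncurved/Hinich block to \cite{Hini}, and note that the gencurved/acyclic block is trivial because every map there is a weak equivalence---is exactly the skeleton of the paper's proof. The gap is in your first two steps: you have the adjunction written the wrong way round, and this propagates to the Quillen check.

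The Maurer--Cartan description attaches to maps \emph{out of} $CE(\mathfrak g)$, not into it: $CE(\mathfrak g)=\hat S(\Sigma\mathfrak g)^*$ is free as a topological commutative algebra, so a continuous algebra map $CE(\mathfrak g)\to A$ is determined by a linear map $(\Sigma\mathfrak g)^*\to M_A$, i.e.\ by an element of $\mathfrak g\,\hat\otimes\,M_A$, and compatibility with the differential is the curved MC equation. Dually $L(A)$ is free as a Lie algebra, so $\mathrm{Hom}_{\mathcal G}(L(A),\mathfrak g)$ is described the same way. The correct natural isomorphism is therefore
\[
\mathrm{Hom}_{\mathcal E}\bigl(CE(\mathfrak g),A\bigr)\;\cong\;\mathrm{Hom}_{\mathcal G}\bigl(L(A),\mathfrak g\bigr),
\]
and your displayed formula $\mathrm{Hom}_{\mathcal E}(A,CE(\mathfrak g))\cong\mathrm{Hom}_{\mathcal G}(\mathfrak g,L(A))$ is false in general (already for $A=k[[t]]$ and $\mathfrak g$ one-dimensional abelian in degree $-1$ the right-hand side is a point while the left-hand side is infinite). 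With the adjunction oriented correctly, $CE$ is the \emph{right} Quillen functor in the contravariant sense: one must show that $CE$ carries fibrations in $\mathcal G$ (surjections) to cofibrations in $\mathcal E$, and acyclic fibrations to acyclic cofibrations. Your plan to check $CE$ on the generating cofibrations of Lemma \ref{gencofG} is thus aimed at the wrong condition; that computation, while pleasant, does not establish the Quillen property for this adjunction.

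The paper carries out the correct check directly on surjections. After the dichotomy lemma and the unit/counit argument (which you have right), the only new work is to show that for a surjection $f\colon\mathfrak g\to\mathfrak h$ with $\mathfrak g$ gencurved, $CE(f)$ is a cofibration in $\mathcal E$; this is done by writing $CE(\mathfrak g)=A\otimes\Lambda(x)$ via Lemma \ref{acyc}, splitting the dual of $f$ as graded vector spaces, and recognising the resulting map as a type (ii) or type (iii) cofibration from Definition \ref{cmc}. If you want to salvage your generator-based approach, the equivalent thing to verify is that $L$ sends the generating cofibrations of $\mathcal E$ (the three families in Definition \ref{cmc}) to fibrations in $\mathcal G$, i.e.\ to surjections; that is a legitimate alternative route and is indeed short.
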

The proof consists of a succession of lemmas below.
\begin{lemma}\label{acyccurved}
The functors $L$ and $CE$ interchange Hinich algebras and uncurved Lie algebras.  They also interchange acyclic algebras and gencurved Lie algebras.
\end{lemma}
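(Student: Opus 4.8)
The plan is to reduce both assertions to a single dichotomy. By Lemma \ref{twotypes} a local pseudo--compact dg algebra is a Hinich algebra or an acyclic algebra according to whether its maximal ideal is or is not closed under the differential, while by definition a curved Lie algebra is uncurved or gencurved according to whether its curvature vanishes. I would therefore show that under each of $L$ and $CE$ these two dichotomies correspond, the bridge in both directions being the term $d_0$ of the Chevalley--Eilenberg differential on the one side and the curvature functional $\epsilon\circ d$ on the other.

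First I would treat $CE$. For a curved Lie algebra $\mathfrak g$ the underlying algebra $\hat S(\Sigma\mathfrak g)^*$ is local pseudo--compact, its unique maximal ideal $M$ being the augmentation ideal (completeness forces every element with nonzero constant term to be invertible). The point is that on the topological generators $(\Sigma\mathfrak g)^*$ the components $d_1$ and $d_2$ take values in $(\Sigma\mathfrak g)^*\oplus S^2(\Sigma\mathfrak g)^*\subseteq M$, whereas $d_0$ takes values in the scalars $k$. Since $M$ is the closed ideal generated by $(\Sigma\mathfrak g)^*$ and $d$ is a continuous derivation, one checks that $d(M)\subseteq M$ holds precisely when $d_0$ vanishes on $(\Sigma\mathfrak g)^*$, i.e. when evaluation on $w$ is identically zero, i.e. when $w=0$. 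Thus $CE$ sends an uncurved Lie algebra to a Hinich algebra; and when $\mathfrak g$ is gencurved, $CE(\mathfrak g)$ is a local algebra whose maximal ideal is not differential, hence acyclic by Lemma \ref{twotypes}.

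Next I would treat $L$. For $A\in\mathcal E$ with maximal ideal $M$ write $A=k\oplus M$; the curvature of $LA$ is by construction the functional $\epsilon\circ d\colon A\to k$. Because $d$ annihilates constants, this functional vanishes exactly when $\epsilon(dm)=0$ for every $m\in M$, that is, when $dm\in\ker\epsilon=M$ for every $m$, that is, when $M$ is closed under the differential. Hence the curvature of $LA$ is zero exactly when $A$ is a Hinich algebra and nonzero exactly when $A$ is acyclic, so $L$ sends Hinich algebras to uncurved Lie algebras and acyclic algebras to gencurved ones, as required for the two functors to interchange the stated classes.

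The computations above are short once the correct objects are named, so the real work is bookkeeping rather than a genuine obstacle. The two places deserving a line of justification are the identification of the maximal ideal of $CE(\mathfrak g)$ as the augmentation ideal (via invertibility of units in the completion) and the verification of $d(M)\subseteq M$ from the derivation property applied to the topological generators; with these in hand, the passage ``not Hinich $\Rightarrow$ acyclic'' is supplied directly by Lemma \ref{twotypes}, avoiding any need to exhibit an explicit primitive for $1$.
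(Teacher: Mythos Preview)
Your proposal is correct and follows essentially the same approach as the paper: both arguments hinge on identifying the curvature of $LA$ with the functional $\epsilon\circ d$ (vanishing exactly when the maximal ideal is differential) and on observing that the $d_0$ component of the Chevalley--Eilenberg differential is the only piece that can escape the augmentation ideal, with Lemma~\ref{twotypes} supplying the passage from ``not Hinich'' to ``acyclic''. Your write-up is in fact more detailed than the paper's, which leaves the identification of the maximal ideal of $CE(\mathfrak g)$ and the derivation-property check implicit.
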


\begin{proof}
If $A$ is a Hinich algebra, then the image of its differential lies in its maximal ideal so the composition of the differential with augmentation is zero. Thus $LA$ is uncurved.   Conversely if the maximal ideal of $A$ is closed under the differential then $A$ is a Hinich algebra.

If $\mathfrak{g}$ is a gencurved Lie algebra then $d_0$ is non-zero and for some $x\in (\Sigma \mathfrak{g})^*$ we have $dx$ not in the maximal ideal of $CE(\mathfrak{g})$.  It follows from the proof of Lemma \ref{twotypes} that $CE(\mathfrak{g})$ is acyclic. Conversely if $CE(\mathfrak{g})$ is acyclic, then $d_0$ is non-zero and $\mathfrak{g}$ is gencurved.
\end{proof}

Let $\epsilon\colon CEL\to 1_{\mathcal{E}}$ be the unit of the adjunction and let $\delta\colon LCE \to 1_{\mathcal{G}}$  be the counit.  In order to show that $CE, L$ form a Quillen equivalence, it suffices to show that $\epsilon, \delta$ are always weak equivalences and that $CE$ takes fibrations to cofibrations and acyclic fibrations to acyclic cofibrations.

\begin{lemma}
For any $A\in \mathcal{E}$ and any $\mathfrak{g}\in \mathcal{G}$ we have that $\epsilon_A$ and $\delta_{\mathfrak{g}}$ are weak equivalences.
\end{lemma}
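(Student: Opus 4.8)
The plan is to split the verification into two disjoint cases according to the two-fold decomposition of each category, and to treat these cases by completely different means: the Hinich/uncurved case by reduction to the results of \cite{Hini}, and the acyclic/gencurved case directly from the definitions of weak equivalence.

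First I would invoke Lemma \ref{acyccurved}, which guarantees that $L$ and $CE$ respect this decomposition. Suppose $A\in\mathcal{E}$ is a Hinich algebra. Then $LA$ is uncurved and $CELA$ is again a Hinich algebra, so $\epsilon_A\colon CELA\to A$ is a morphism of Hinich algebras. On this subcategory the curvature of $LA$ vanishes, since the differential of a Hinich algebra has image in the maximal ideal and hence composes to zero with the augmentation; correspondingly, $CE$ of an uncurved Lie algebra is just the ordinary Chevalley--Eilenberg complex with $d_0=0$. Thus the restriction of the adjunction $(CE,L)$ to these subcategories coincides with the Koszul-duality adjunction between Hinich algebras and dg Lie algebras of \cite{Hini}, and $\epsilon_A$ is precisely the corresponding component of the unit studied there. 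Hinich's theorem then shows that $\epsilon_A$ is a filtered quasi-isomorphism, i.e.\ a weak equivalence in $\mathcal{E}$. The identical argument, applied to an uncurved $\mathfrak{g}\in\mathcal{G}$, identifies $\delta_{\mathfrak{g}}\colon LCE\mathfrak{g}\to\mathfrak{g}$ with the counit of \cite{Hini} and shows it is a weak equivalence in $\mathcal{L}$.

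The complementary case is immediate from the definitions. If $A\in\mathcal{E}$ is acyclic then $LA$ is gencurved and $CELA$ is acyclic, so $\epsilon_A$ is a morphism between acyclic algebras and is therefore a weak equivalence by Definition \ref{cmc}. Dually, if $\mathfrak{g}$ is gencurved then $CE\mathfrak{g}$ is acyclic and $LCE\mathfrak{g}$ is gencurved, so $\delta_{\mathfrak{g}}$ is a morphism between gencurved Lie algebras and is a weak equivalence by Definition \ref{cmccurved}.

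The main obstacle is not any single case but the bookkeeping needed to identify the restricted functors with Hinich's: one must check carefully that on the uncurved/Hinich subcategories the maps $\epsilon$ and $\delta$ genuinely reduce to his unit and counit, so that the substantive input (that the Koszul-duality maps are quasi-isomorphisms) can be imported wholesale from \cite{Hini}. Once this identification is secured, the genuinely curved case contributes nothing analytic, being settled by the definitional fact that all maps between acyclic, respectively gencurved, objects are weak equivalences.
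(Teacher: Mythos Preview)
Your argument is correct and follows essentially the same strategy as the paper: reduce the Hinich/uncurved case to \cite{Hini}, and dispose of the acyclic/gencurved case directly from Definitions~\ref{cmc} and~\ref{cmccurved} via Lemma~\ref{acyccurved}. The paper's proof is terser, simply stating that the Hinich case is already established in \cite{Hini} without spelling out the identification of the restricted adjunction, whereas you make this bookkeeping explicit.
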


\begin{proof}
As the equivalence has been established between Hinich algebras and uncurved Lie algebras \cite{Hini}, we may assume that $A$ is acyclic and $\mathfrak{g}$ is gencurved.  Then by Lemma \ref{acyccurved} we know that $\epsilon_A$ is a map between acyclic algebras and $\delta_{\mathfrak{g}}$ is a map between gencurved Lie algebras.  Thus they are both weak equivalences (see Definitions \ref{cmc}, \ref{cmccurved}).
\end{proof}

We know that $CE$ takes fibrations between uncurved Lie algebras to cofibrations and preserves their acyclicity \cite{Hini}.  Further we know that $CE$ preserves the acyclicity of all maps outside the subcategory of uncurved Lie algebras.  That is we know that the only weak equivalences outside this subcategory are maps between gencurved algebras, and $CE$ takes these to maps between acyclic algebras, which are known to all be weak equivalences.

It remains to show that:

\bigskip
(i) $CE$ takes fibrations between gencurved Lie algebras to cofibrations.

(ii) $CE$ takes fibrations that are from gencurved to uncurved Lie algebras to cofibrations.

\bigskip
Recall that fibrations in $\mathcal{G}$ are precisely the surjective maps.  Let $f\colon \mathfrak{g} \to \mathfrak{h}$ be a surjective map in $\mathcal{G}$.

\begin{lemma}
If $\mathfrak{g},\mathfrak{h}$ are gencurved then $CE(f)$ is a cofibration.
\end{lemma}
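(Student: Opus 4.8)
The plan is to exploit that both $CE(\mathfrak{h})$ and $CE(\mathfrak{g})$ are acyclic algebras and to show that $CE(f)$ is a cofibration of type (ii) in Definition~\ref{cmc}, namely a Hinich cofibration tensored with $1_{\Lambda(x)}$. Since $\mathfrak{g},\mathfrak{h}$ are gencurved, Lemma~\ref{acyccurved} gives that $CE(\mathfrak{g}),CE(\mathfrak{h})$ are acyclic, so $CE(f)$ is a morphism of acyclic algebras. Writing $CE(\mathfrak{h})=CE(\mathfrak{h})^0\otimes\Lambda(x)$ by Lemma~\ref{acyc} and taking $CE(f)(x)$ (which satisfies $d\,CE(f)(x)=CE(f)(dx)=1$) as the odd generator for $CE(\mathfrak{g})=CE(\mathfrak{g})^0\otimes\Lambda(x)$, we obtain $CE(f)=CE(f)^0\otimes 1_{\Lambda(x)}$. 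It therefore suffices to prove that the restriction $CE(f)^0\colon CE(\mathfrak{h})^0\to CE(\mathfrak{g})^0$ is a cofibration in the Hinich category.

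First I would record the algebra-level shape of $CE(f)$. Because $f$ is surjective, the dual map $(\Sigma f)^*\colon(\Sigma\mathfrak{h})^*\to(\Sigma\mathfrak{g})^*$ is injective, so $CE(f)\colon CE(\mathfrak{h})=\hat{S}((\Sigma\mathfrak{h})^*)\hookrightarrow\hat{S}((\Sigma\mathfrak{g})^*)=CE(\mathfrak{g})$ is the inclusion of a dg subalgebra, and a graded splitting of $f$ exhibits $CE(\mathfrak{g})\cong CE(\mathfrak{h})\,\hat{\otimes}\,\hat{S}(K)$ as a free extension on the generators $K=(\Sigma\ker f)^*$. The crucial observation, which uses that morphisms of curved Lie algebras preserve the curvature, is that the splitting $s\colon\mathfrak{h}\to\mathfrak{g}$ may be chosen with $s(w_{\mathfrak{h}})=w_{\mathfrak{g}}$; then every $\kappa\in K$ annihilates $w_{\mathfrak{g}}$, so the component $d_0$ of the differential (evaluation on the curvature) vanishes on $K$, and $d\kappa=d_1\kappa+d_2\kappa$ lies in the maximal ideal of $CE(\mathfrak{g})$ for each new generator.

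I would then correct the generators to closed ones lying in $CE(\mathfrak{g})^0$: since $d_0$ vanishes on $K$, each $\kappa$ can be adjusted by a term involving $x$ so as to become a cocycle $\tilde\kappa$, and this triangular change of variables, together with the correction of $x$ itself to an element with $dx=1$ on the nose, should identify $CE(\mathfrak{g})^0$ with the free extension $CE(\mathfrak{h})^0\,\hat{\otimes}\,\hat{S}(\tilde{K})$ of $CE(\mathfrak{h})^0$ by the closed generators $\tilde{K}$. A free extension of Hinich algebras by closed generators is a cofibration in the Hinich category \cite{Hini} (cf.\ \cite{Laza1}); hence $CE(f)^0$ is a Hinich cofibration and $CE(f)=CE(f)^0\otimes 1_{\Lambda(x)}$ is a cofibration of type (ii), so a cofibration in $\mathcal{E}$.

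The step I expect to be the main obstacle is precisely this last identification: verifying that the closed subalgebra $CE(\mathfrak{g})^0$ is exactly the free Hinich extension generated by the corrected cocycles, i.e.\ that the change of variables isolating the $\Lambda(x)$-factor is compatible with the completions and that the curvature direction has been fully absorbed (the naive correction $\tilde\kappa=\kappa-x\,d\kappa$ need not be closed, since $d\kappa$ may itself have an $x$-component). Equivalently one could attempt to identify $CE(f)^0$ with the Chevalley--Eilenberg map of the induced surjection $\mathfrak{g}/\langle w_{\mathfrak{g}}\rangle\to\mathfrak{h}/\langle w_{\mathfrak{h}}\rangle$ of uncurved Lie algebras and invoke Hinich's theorem that $CE$ carries surjections of dg Lie algebras to cofibrations; but making that identification compatible with the differentials appears to require the same analysis of the curvature generator, so I would keep the explicit free-extension argument as the primary route.
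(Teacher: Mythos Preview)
Your overall route coincides with the paper's: reduce to showing that $f':=CE(f)^0\colon CE(\mathfrak h)^0\to CE(\mathfrak g)^0$ is a Hinich cofibration, with $CE(f)=f'\otimes 1_{\Lambda(x)}$ a type~(ii) cofibration. The obstacle you flag, however, is not genuine. The subalgebras $CE(\mathfrak h)^0$ and $CE(\mathfrak g)^0$ consist by definition of closed elements, so they carry the \emph{zero} differential; there is nothing to correct. Equivalently, for any $\kappa$ in $CE(\mathfrak g)$ the element $d\kappa$ is closed (since $d^2=0$), hence lies in $CE(\mathfrak g)^0$ and has no $x$-component; therefore $d(\kappa-x\,d\kappa)=d\kappa-(dx)\,d\kappa=0$, and the assignment $\kappa\mapsto\kappa-x\,d\kappa$ always lands in $CE(\mathfrak g)^0$. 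This is exactly the algebra map $CE(\mathfrak g)\to CE(\mathfrak g)^0$ killing $x$, so applying it to your graded-algebra splitting $CE(\mathfrak g)\cong CE(\mathfrak h)\,\hat\otimes\,\hat S(K)$ gives $CE(\mathfrak g)^0\cong CE(\mathfrak h)^0\,\hat\otimes\,\hat S(K)$ as Hinich algebras with zero differential, exhibiting $f'$ as a standard free extension and hence a cofibration.

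The paper finishes with a minor variant: rather than explicitly identifying the free extension, it observes that the graded splitting of $f$ furnishes a left inverse $j'$ to $f'$ (a map of Hinich algebras, since both differentials vanish), and that $CE(\mathfrak g)^0$, being a graded-algebra retract of $\hat S((\Sigma\mathfrak g)^*)$, is itself of the form $\hat S(U)$. A split monomorphism into a free algebra, with vanishing differentials on both sides, is then a Hinich cofibration. Your free-extension description and the paper's retract argument amount to the same thing once you notice that the differential on the $(\,)^0$ factor is zero.
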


\begin{proof}
From Lemma \ref{acyccurved} we know that $CE(\mathfrak{g})$ and $CE(\mathfrak{h})$ are acyclic.  Thus by Lemma \ref{acyc} we may write $CE(f)=f'\otimes 1_{\Lambda(x)}$ for some $f'\colon A \to B$ a map of  Hinich algebras with vanishing differentials.  Our goal will be to show that $f'$ is a cofibration in the Hinich category, making $CE(f)$ a cofibration in $\mathcal{C}$ of type (ii) (see Definition \ref{cmc}).

As $f$ is surjective, we have that $f^*\colon (\Sigma \mathfrak{h})^* \to (\Sigma \mathfrak{g})^*$ is injective and thus splits as a map of graded vector spaces. This splitting induces a map of graded algebras (not necessarily respecting the differentials): $$j\colon CE(\mathfrak{g}) \to CE(\mathfrak{h}),$$ satisfying $jCE(f)=1_{CE(\mathfrak{h})}$.  Let $j'\colon B \to A$ be the restriction of $j$ to $B$,  composed with the natural map of algebras (again not necessarily respecting the differential) $CE(h) \to A$.  As both the differentials on $A$ and $B$ vanish, we have that $j'$ respects the differentials and is a map of Hinich algebras.  Clearly we have $j'f'=1_A$, so we know that $f'$ is a retraction in the Hinich category.

In order to conclude that the retraction $f'$ is a cofibration in the Hinich category it suffices to show that as a graded algebra, $B$ has the form $B=\hat{S}(U)$ for some vector space $U$.  This follows from the fact that as a graded algebra (ignoring the differentials) $B$ is a retract of $CE(\mathfrak{g})$.  Indeed the inclusion of $B$ in $CE(\mathfrak{g})$ composed with the natural map of algebras $CE(\mathfrak{g}) \to B$ killing $x$, results in the identity map on $B$.
\end{proof}

\begin{lemma}
If $\mathfrak{g}$ is gencurved and $\mathfrak{h}$ is uncurved then $CE(f)$ is a cofibration.
\end{lemma}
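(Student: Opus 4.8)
The plan is to reduce $CE(f)$ to two cases already settled: the uncurved--uncurved case (Hinich's theorem, quoted above) and the gencurved--gencurved case (the previous lemma), by factoring $f$ through the quotient that kills the curvature. By Lemma \ref{acyccurved}, $CE(\mathfrak g)$ is an acyclic algebra while $CE(\mathfrak h)$ is a Hinich algebra, so $CE(f)$ is a morphism from a Hinich algebra to an acyclic one. By Definition \ref{cmc} it therefore suffices to realise $CE(f)$ as (a retract of) a type-(iii) generator, that is, a Hinich cofibration $\phi\colon CE(\mathfrak h)\to C$ followed by the canonical inclusion $C\hookrightarrow C\otimes\Lambda(x)\cong CE(\mathfrak g)$.

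First I would factor $f$. Since $\mathfrak h$ is uncurved, $f(w_{\mathfrak g})=w_{\mathfrak h}=0$, so the curvature ideal $\langle w_{\mathfrak g}\rangle$ (which is closed under $d$, as $dw_{\mathfrak g}=0$ and $d[a,w_{\mathfrak g}]=[da,w_{\mathfrak g}]$) lies in $\ker f$, and $f$ factors as $\mathfrak g\xrightarrow{p}\mathfrak k\xrightarrow{\bar f}\mathfrak h$, where $\mathfrak k=\mathfrak g/\langle w_{\mathfrak g}\rangle$ is uncurved and both $p$ and $\bar f$ are surjective. Applying the contravariant functor $CE$ gives $CE(f)=CE(p)\circ CE(\bar f)$. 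Now $\bar f$ is a fibration between uncurved Lie algebras, so by Hinich's theorem $CE(\bar f)\colon CE(\mathfrak h)\to CE(\mathfrak k)$ is a cofibration of Hinich algebras, hence a cofibration in $\mathcal E$. Since cofibrations in $\mathcal E$ are closed under composition (shown in \S\ref{extsec}), it remains only to prove that $CE(p)\colon CE(\mathfrak k)\to CE(\mathfrak g)$ is a cofibration.

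For $CE(p)$ I would exploit surjectivity exactly as in the previous lemma: $p^*$ is injective and splits as graded vector spaces, giving a graded-algebra identification $CE(\mathfrak g)\cong CE(\mathfrak k)\,\hat\otimes\,\hat S(W)$ with $W=(\Sigma\langle w_{\mathfrak g}\rangle)^*$, under which $CE(p)$ is the inclusion $a\mapsto a\otimes 1$. The generator $x\in W$ dual to $\Sigma w_{\mathfrak g}$ satisfies $d_0x=1$, and this is the sole source of the acyclicity of $CE(\mathfrak g)$ asserted by Lemma \ref{acyccurved}. Using Lemma \ref{acyc} I would adjust $x$ so that $dx=1$ and write $CE(\mathfrak g)=C\otimes\Lambda(x)$ with $C$ a Hinich algebra; the aim is to arrange matters so that $C$ contains the image of $CE(p)$ and the induced map $CE(\mathfrak k)\to C$ is a Hinich cofibration, namely a quasi-free extension of $CE(\mathfrak k)$ by the remaining generators of $W$. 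Then $CE(p)$ is precisely the composite $CE(\mathfrak k)\to C\hookrightarrow C\otimes\Lambda(x)$, a type-(iii) cofibration, completing the argument.

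The main obstacle is the last step. The curvature ideal $\langle w_{\mathfrak g}\rangle$ need not be one-dimensional, since $[w_{\mathfrak g},-]=d^2$ is in general nonzero, so $\hat S(W)$ is not literally $\Lambda(x)$, and the global acyclic decomposition of Lemma \ref{acyc} must be reconciled with the relative quasi-free structure coming from $p^*$. Concretely, one must choose coordinates --- equivalently a differential-compatible filtration of $W$ --- in which the single generator dual to the curvature splits off as an honest dg tensor factor $\Lambda(x)$, while the complementary generators assemble into a genuine Hinich cofibration $CE(\mathfrak k)\to C$; this is exactly where $w_{\mathfrak g}\neq0$ is used. As a consistency check, applying the gencurved--gencurved lemma to $f\times 1_{\langle w_0\rangle}\colon \mathfrak g\times\langle w_0\rangle\to\mathfrak h\times\langle w_0\rangle$ (with $\langle w_0\rangle$ a bare curvature, $CE(\langle w_0\rangle)=\Lambda(x_0)$) shows that $CE(f)\otimes 1_{\Lambda(x_0)}$ is a cofibration, which is consistent with, though it does not by itself yield, the desired conclusion.
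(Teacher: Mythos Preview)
Your reduction is valid as far as it goes, but it leaves the essential difficulty untouched. The map $p\colon\mathfrak g\to\mathfrak k=\mathfrak g/\langle w_{\mathfrak g}\rangle$ is itself a surjection from a gencurved Lie algebra onto an uncurved one, so proving that $CE(p)$ is a cofibration is a special case of the very lemma you are trying to establish. You recognise this (``the main obstacle is the last step'') and sketch what would need to be done --- split off a single $\Lambda(x)$ factor and organise the remaining generators of $W=(\Sigma\langle w_{\mathfrak g}\rangle)^*$ into a Hinich cofibration $CE(\mathfrak k)\to C$ --- but you do not carry it out. Since $\langle w_{\mathfrak g}\rangle$ can be large and the differential on $CE(\mathfrak g)$ mixes the $CE(\mathfrak k)$ and $\hat S(W)$ factors, this step needs a real argument; as written, the proof is incomplete.

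The paper avoids this circularity by factoring in the opposite direction: instead of quotienting $\mathfrak g$ by its curvature ideal, it looks at the kernel $I=\ker f$, which contains $w_{\mathfrak g}$ and is therefore itself gencurved. The short exact sequence $I\hookrightarrow\mathfrak g\twoheadrightarrow\mathfrak h$ dualises to a graded-algebra splitting $CE(\mathfrak g)\cong CE(I)\,\hat\otimes\, CE(\mathfrak h)$, and the paper checks that this splitting can be upgraded to one in $\mathcal E$. Consequently $CE(f)$ is identified with the coproduct of the identity on $CE(\mathfrak h)$ with the unit map $\iota\colon k\to CE(I)$. Since $CE(I)$ is acyclic (Lemma \ref{acyccurved}), it decomposes as $A\otimes\Lambda(x)$, and $\iota$ is then the tensor product of the unit $k\to A$ with $k\to\Lambda(x)$; the former is a Hinich cofibration because $A$ is a graded-algebra retract of a completed symmetric algebra. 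The point is that by working with $I$ rather than $\langle w_{\mathfrak g}\rangle$ one reduces directly to analysing a map out of $k$, rather than to another instance of the gencurved--to--uncurved problem.
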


\begin{proof}
We have an exact sequence of Lie algebras:$$I \stackrel i \to \mathfrak{g} \stackrel  f \to  \mathfrak{h},$$
where $i$ is the inclusion of the kernel of $f$.  As $I$ contains the curvature of $\mathfrak{g}$ we have that $i$ is a map of gencurved Lie algebras.

Applying $CE$ to this sequence we get:
\begin{eqnarray}CE(I) \stackrel {CE(i)} \longleftarrow CE(\mathfrak{g}) \stackrel  {CE(f)} \longleftarrow  CE(\mathfrak{h}).\label{CEseq}\end{eqnarray}

The map $i$ induces a surjective map $(\Sigma \mathfrak{g})^*\to (\Sigma I)^*$ which splits as a map of graded vector spaces.  Thus we have a retraction of graded algebras induced $j\colon  \hat{S} (\Sigma I)^*  \to  \hat{S}(\Sigma \mathfrak{g})^*$, satisfying $CE(i)j=1_{CE(i)}$.

Lemma \ref{acyccurved} implies that $CE(I), CE(\mathfrak{g})$ are acyclic, so by Lemma \ref{acyc} we may we may write $$CE(i)=i'\otimes 1_{\Lambda(x)}\colon B \otimes \Lambda(x) \leftarrow A \otimes \Lambda(x),$$ for a map of Hinich algebras $i'\colon A \to B$.

Let $j'\colon B \to A$ be the restriction of $j$ to $B$, composed with the projection $A\otimes \Lambda(x) \to A$ killing $x$.  Then we have that $i'j'=1_B$ and $$(i'\otimes 1_{\Lambda(x)})(j'\otimes 1_{\Lambda(x)})=1_{CE(i)}.$$  Now $j'$ is a map of Hinich algebras, as it respects the (trivial) differentials of $A,B$.   Consequently $(j'\otimes 1_{\Lambda(x)})$ is a morphism in $\mathcal{E}$ and the sequence (\ref{CEseq}) splits:

\begin{center}
\xymatrix{
\qquad\qquad\qquad\qquad{\hat{S}(\Sigma I)^*} \ar@/_1pc/[r]_{j'\otimes 1_{\Lambda(x)}}\quad&\quad   \hat{S}(\Sigma I)^* \otimes \hat{S} (\Sigma \mathfrak{h})^* \ar[l]_{\qquad\,\,\,CE(i)}&   \hat{S}(\Sigma \mathfrak{h})^*\ar[l]_{\qquad CE(f)}.}
\end{center}

We see that $CE(f)$ is the coproduct of the identity map $1_{\hat{S}(\Sigma \mathfrak{h})^*}$ with the inclusion $\iota\colon k \to \hat{S}(\Sigma I)^*$.  It remains to show that $\iota$ is a cofibration.

As the differential on $k$ vanishes, we have that $\iota$ factors through $A$.  That is $\iota$ is the tensor product of some map $z\colon k \to A$ with the natural inclusion $k\to \Lambda(x)$.
To conclude that $\iota$ is a cofibration in $\mathcal{E}$ of type (iii) we need only show that $z$ is a cofibration in the Hinich category (see Definition \ref{cmc}).

Note that $z$ is a retract as the Hinich algebra $A$ is augmented over $k$.  We have that $A$ is a retract (as a graded algebra) of  $\hat{S} (\Sigma I)^*$, so the retract $z\colon k \to A$ is indeed a cofibration in the Hinich category.
\end{proof}

This completes the proof of Theorem \ref{equivcurved}.
Combining this with Lemma \ref{coalg} we obtain:
\begin{corollary}\label{Koszul}
The category of $\mathcal V$ of counital cocommutative dg coalgebras is Quillen equivalent to the category $\operatorname{coProd}\mathcal G$ of formal coproducts of curved Lie algebras.
\end{corollary}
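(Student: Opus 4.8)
The plan is to lift the contravariant Quillen equivalence of Theorem \ref{equivcurved} along the formal-product construction of \S\ref{prodsec}, using the identifications $\mathcal{V}^{\rm op}=\operatorname{Prod}(\mathcal{E})$ from Lemma \ref{coalg} and $(\operatorname{coProd}\mathcal{G})^{\rm op}=\operatorname{Prod}(\mathcal{G}^{\rm op})$ coming straight from the definition. The one genuinely new ingredient is a general principle: a Quillen equivalence between CMCs $\mathcal{C}$ and $\mathcal{D}$ induces a Quillen equivalence between $\operatorname{Prod}(\mathcal{C})$ and $\operatorname{Prod}(\mathcal{D})$. Once this is in place the corollary is pure bookkeeping.

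First I would make that principle precise. Given an adjunction $F\colon\mathcal{C}\leftrightarrows\mathcal{D}\colon G$, define $\operatorname{Prod}(F)$ and $\operatorname{Prod}(G)$ by applying $F$, respectively $G$, componentwise and leaving the indexing sets and their structure maps untouched. A morphism $\operatorname{Prod}(F)(\prod_i A_i)\to\prod_j B_j$ is a map $J\to I$ together with components $F(A_{i_j})\to B_j$, which correspond under $F\dashv G$ to components $A_{i_j}\to G(B_j)$, i.e.\ to a morphism $\prod_i A_i\to\operatorname{Prod}(G)(\prod_j B_j)$; this comparison of hom-sets shows $\operatorname{Prod}(F)\dashv\operatorname{Prod}(G)$. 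Since cofibrations and acyclic cofibrations in $\operatorname{Prod}(\mathcal{C})$ are detected componentwise (Definition \ref{struct}) and the index map is unchanged, $\operatorname{Prod}(F)$ preserves both classes whenever $F$ is left Quillen, so this is a Quillen adjunction.

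For the equivalence half I would first record that the fibrant objects of $\operatorname{Prod}(\mathcal{C})$ are exactly the componentwise-fibrant families (Lemma \ref{prodfibra}), while the cofibrant objects are the componentwise-cofibrant ones: the initial object of $\operatorname{Prod}(\mathcal{C})$ is the one-element family on the initial object of $\mathcal{C}$, and a map out of it is a cofibration iff each of its components is. Now take $X=\prod_i A_i$ cofibrant and $Y=\prod_j B_j$ fibrant. A morphism $\operatorname{Prod}(F)(X)\to Y$ and its adjunct $X\to\operatorname{Prod}(G)(Y)$ carry the same index map $J\to I$ and have adjoint components $F(A_{i_j})\to B_j$ and $A_{i_j}\to G(B_j)$, with each $A_{i_j}$ cofibrant and each $B_j$ fibrant. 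A weak equivalence in $\operatorname{Prod}$ means a bijective index map together with componentwise weak equivalences; the index map is common to both sides, and because $F\dashv G$ is a Quillen equivalence each component is a weak equivalence precisely when its adjunct is. Hence the morphism is a weak equivalence iff its adjunct is, so $\operatorname{Prod}(F)\dashv\operatorname{Prod}(G)$ is a Quillen equivalence.

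Finally I would assemble the pieces. Theorem \ref{equivcurved} gives a contravariant Quillen equivalence between $\mathcal{E}$ and $\mathcal{G}$, equivalently a covariant Quillen equivalence $\mathcal{E}\simeq\mathcal{G}^{\rm op}$; the principle above then yields $\operatorname{Prod}(\mathcal{E})\simeq\operatorname{Prod}(\mathcal{G}^{\rm op})$. Rewriting the two sides as $\mathcal{V}^{\rm op}$ (Lemma \ref{coalg}) and $(\operatorname{coProd}\mathcal{G})^{\rm op}$, and passing to opposite categories (which swaps fibrations with cofibrations and preserves weak equivalences, hence carries a Quillen equivalence to a Quillen equivalence), gives the asserted $\mathcal{V}\simeq\operatorname{coProd}\mathcal{G}$. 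I expect the main obstacle to be the equivalence half of the $\operatorname{Prod}$-principle, and within it the clean identification of cofibrant and fibrant objects as the componentwise-cofibrant and componentwise-fibrant families: this is exactly what lets the weak-equivalence criterion for $F\dashv G$ be applied componentwise while the (unchanged) bijection-of-indices condition is carried along for free.
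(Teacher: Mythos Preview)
Your argument is correct and follows the same route the paper intends: the paper simply states the corollary as obtained ``combining this with Lemma~\ref{coalg}'', leaving implicit the principle that $\operatorname{Prod}$ carries a Quillen equivalence to a Quillen equivalence. You have supplied exactly that missing verification, with the componentwise identification of (co)fibrant objects and the adjunct-comparison argument, so your proof is a fleshed-out version of the paper's one-line deduction rather than a different approach.
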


\end{document}